
%
%
%
%
%
%
%
%
%
%
%
%
%
%

%

\documentclass[conference]{IEEEtran}

\usepackage{preamble}

\usepackage{cite}
\usepackage{amsmath,amssymb,amsfonts}
\usepackage{algorithmic}
\usepackage{textcomp}
\usepackage{xcolor}
\usepackage{pifont}

\def\BibTeX{{\rm B\kern-.05em{\sc i\kern-.025em b}\kern-.08em
    T\kern-.1667em\lower.7ex\hbox{E}\kern-.125emX}}

\newcommand*{\Cu}{C_{\mathrm{u}}}
\newcommand*{\Cl}{C_{\mathrm{l}}}

\newcommand{\group}[1]{\mathcal{#1}}

\newcommand*{\symgrp}[1]{\mathcal{S}_{#1}}
\newcommand*{\Sn}{\symgrp{n}}

\renewcommand*{\l}{\ell}

\newcommand*{\dRNN}{\dR_{\geq 0}}

\newcommand{\fL}[1]{f_{\mathrm{L},#1}}
\newcommand{\fR}[1]{f_{\mathrm{R},#1}}
\newcommand{\tfL}[1]{\tilde{f}_{\mathrm{L},#1}}
\newcommand{\tfR}[1]{\tilde{f}_{\mathrm{R},#1}}
\newcommand{\hfL}[1]{\hat{f}_{\mathrm{L},#1}}
\newcommand{\hfR}[1]{\hat{f}_{\mathrm{R},#1}}

\newcommand{\hzero}{\hat 0}

\newcommand{\tr}{\mathsf{T}}

\newcommand{\hvx}{\vectvect{\hat{x}}}

\newcolumntype{C}[1]{>{\centering\arraybackslash$}p{#1}<{$}}

\usepackage{environ}
\NewEnviron{pbalign}
  {
    \begingroup
    \allowdisplaybreaks
    \begin{align}
      \BODY
    \end{align}
    \endgroup
  }
\NewEnviron{pbalign*}
  {
    \begingroup
    \allowdisplaybreaks
    \begin{align*}
      \BODY
    \end{align*}
    \endgroup
  }

\newcommand{\Footnotetext}[2]{\begin{figure}[!b]\footnotesize%
  \vspace{-3ex}\hrulefill\hfill\makebox[0em]{}\hfill\makebox[0em]{}%
  \par${}^{#1}$ #2\vspace{-0.60ex}\end{figure}\addtocounter{figure}{0}}

\begin{document}

\title{Double-Cover-Based Analysis of the \\
       Bethe Permanent of Non-negative Matrices}

\author{
  \IEEEauthorblockN{Kit Shing NG}
  \IEEEauthorblockA{
    \textit{Department of Information Engineering} \\
    \textit{The Chinese University of Hong Kong} \\
    Shatin, N.T., Hong Kong \\
    nks020@ie.cuhk.edu.hk \\[-0.5cm]
  }
  \and
  \IEEEauthorblockN{Pascal O. Vontobel}
  \IEEEauthorblockA{
    \textit{Department of Information Engineering} \\
    \textit{The Chinese University of Hong Kong} \\
    Shatin, N.T., Hong Kong \\
    pascal.vontobel@ieee.org \\[-0.5cm]
  }
}

\maketitle

\begin{abstract}
  The permanent of a non-negative matrix appears naturally in many information
  processing scenarios. Because of the intractability of the permanent beyond
  small matrices, various approximation techniques have been developed in the
  past. In this paper, we study the Bethe approximation of the permanent and
  add to the body of literature showing that this approximation is very well
  behaved in many respects. Our main technical tool are topological double
  covers of the normal factor graph whose partition function equals the
  permanent of interest, along with a transformation of these double covers.

\end{abstract}

\Footnotetext{}{The work described in this paper was partially supported by
  grants from the Research Grants Council of the Hong Kong Special
  Administrative Region, China (Project Nos.\ CUHK 14209317 and CUHK
  14208319).}

\section{Introduction}
\label{sec:introduction:1}

Let $n$ be a positive integer. Recall that the permanent of a matrix
$\matr{A} = (a_{i, j}) \in \dR^{n \times n}$ is defined to be (see, e.g.,
\cite{Minc:78})
\begin{align}
  \perm(\matr{\matr{A}})
    &\defeq 
       \sum_{\sig \in \setset{S}_n}
         \prod_{i \in [n]} a_{i, \sig(i)},
           \label{eq:perm}
\end{align}
where $\setset{S}_n$ is the symmetric group of degree $n$, i.e., the group of
all the $n!$ permutations of $[n] \defeq \{ 1, \ldots, n \}$. It is well known
that exactly computing the permanent is suspected to be a hard problem in
general (see, e.g., the discussion in~\cite{6352911}).

In this paper we focus on the particularly important special case when
$\matr{A}$ is a non-negative matrix, i.e., when the entries of the matrix
$\matr{A}$ take on non-negative values. Various approaches have been proposed
to efficiently numerically approximate the permanent of such matrices (see,
e.g., the discussion in~\cite{6352911}). One of these approximations is the
so-called Bethe permanent $\perm_{\Bethe}(\matr{A})$, which is based on the
Bethe approximation from statistical physics~\cite{Yedidia:Freeman:Weiss:05:1}
and is given as the solution of some optimization problem derived from
$\matr{A}$.

In contrast to the original definition of $\perm_{\Bethe}(\matr{A})$, which is
given in terms of an optimization problem, one can, using the techniques that
were developed in~\cite{6570731}, give a combinatorial characterization of
$\perm_{\Bethe}(\matr{A})$. Namely,
\begin{align}
  \perm_{\Bethe}(\matr{A})
    &= \limsup_{M \to \infty} \ 
         \perm_{\Bethe,M}(\matr{A}),
           \label{eq:ZBethe:perm:intro:1} \\
  \perm_{\Bethe,M}(\matr{A})
    &\defeq
       \sqrt[M]{\Big\langle \!
                  \perm(\matr{A}^{\! \uparrow \matr{\tilde P}})
                \! \Big\rangle_{\matr{\tilde P} \in \tilde \Phi_M}}.
                             \label{eq:ZBethe:perm:intro:2}
\end{align}
Here the expression under the root sign represents the (arithmetic) average of
$\perm(\matr{A}^{\! \uparrow \matr{\tilde P}})$ over all $M$-covers of
$\matr{A}$, $M \! \geq \! 1$. (See the upcoming sections for the technical
details.)

Note that we can write
\begin{align}
  \underbrace{
    \frac{\perm(\matr{A})}
         {\perm_{\Bethe}(\matr{A})}
  }_{\text{\ding{192}}}
    &= \underbrace{
         \frac{\perm(\matr{A})}
              {\perm_{\Bethe,2}(\matr{A})}
       }_{\text{\ding{193}}}
       \cdot
       \underbrace{
         \frac{\perm_{\Bethe,2}(\matr{A})}
              {\perm_{\Bethe}(\matr{A})}
       }_{\text{\ding{194}}} \ .
         \label{eq:Z:ratios:1}
\end{align}
Numerically computing the ratios in~\eqref{eq:Z:ratios:1} for various choices
of matrices $\matr{A}$ shows that a significant contribution to the
ratio~\ding{192} comes from the ratio~\ding{193}. Therefore, understanding the
ratio~\ding{193} can give useful insights to understanding the
ratio~\ding{192}. The central topic of this paper is to make this observation
mathematically more precise.

Let $\matr{A}$ be an arbitrary non-negative matrix of size $n \times n$. The
key technical result of this paper is that
\begin{align}
  \frac{\perm_{\Bethe,2}(\matr{A})}
       {\perm(\matr{A})}
    &= \sqrt
       {
         \sum_{\sig_1, \sig_2 \in \Sn}
           p(\sig_1) 
           \cdot
           p(\sig_2)
           \cdot
           2^{-c(\sig_1,\sig_2)}
       },
         \label{eq:key:result:1}
\end{align}
where
$p(\sigma) \defeq \big( \prod_{i \in [n]} a_{i, \sig(i)} \big) /
\perm(\matr{A})$ is the probability mass function on $\setset{S}_n$ induced by
$\matr{A}$ and where $c(\sig_1,\sig_2)$ is the number of cycles of length
larger than one in the cycle notation expression of the permutation
$\sig_1 \circ \sig_2^{-1}$.

This result is then leveraged to make the following ana\-lytical statements
related to~\eqref{eq:Z:ratios:1}:
\begin{itemize}

\item If $\matr{A} = \matr{1}_{n \times n}$, i.e., the all-one matrix of size
  $n \times n$, then\footnote{The notation $a(n) \sim b(n)$ stands for
    $\lim\limits_{n \to \infty} \frac{a(n)}{b(n)} = 1$.}
  \begin{align}
    \frac{\perm(\matr{A})}
         {\perm_{\Bethe}(\matr{A})}
      &\sim
         \sqrt{\frac{2\pi n}{\e}},
    \quad
    \frac{\perm(\matr{A})}
         {\perm_{\Bethe,2}(\matr{A})}
       \sim
         \sqrt[4]{\frac{\pi n}{\e}}.
           \label{eq:permanent:ratio:all:one:matrix:1}
  \end{align}
  Observe that, up to a factor $\sqrt{2}$, the ratios~\ding{193}
  and~\ding{194} in~\eqref{eq:Z:ratios:1} are the same for this matrix!

\newcommand{\Stringone}{$\perm_{\Bethe,2}(\matr{A})$}
\newcommand{\Stringtwo}{$\perm_{\Bethe}(\matr{A})$}
\newcommand{\Stringthr}{$\perm(\matr{A})$}

\begin{figure}
  \vskip-0.30cm
  \begin{center}
  \resizebox{1.05\columnwidth}{!}{%
  \input{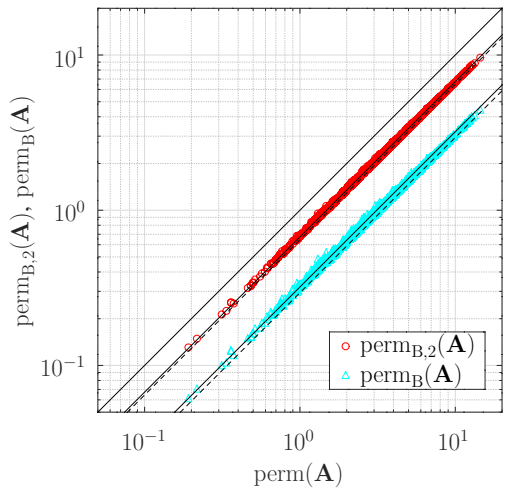}
  }
  \end{center}
  \caption{Simulation results discussed in Section~\ref{sec:introduction:1}.}
  \label{fig:simulation:results:1}
  \vspace{-0.50cm}
\end{figure}

\item If $\matr{A}$ is a random matrix of size $n \times n$ whose entries are
  i.i.d.\ according to some distribution with support over the non-negative
  reals, then
  \begin{align}
    \hspace{-0.25cm}
    \gamma_{\Bethe,2}(n)
      &\defeq
         \frac{\sqrt{\expvalbig{\perm(\matr{A})^2}}}
              {\sqrt{\expvalbig{\perm_{\Bethe,2}(\matr{A})^2}}}
       \sim
         \gamma'_{\Bethe,2}(n)
       \defeq
         \sqrt[4]{\frac{\pi n}{\e}}.
           \label{eq:permanent:ratio:random:matrix:1}
  \end{align}
  Interestingly, although the numerator and the denominator on the left-hand
  side of the above expression both \emph{depend} on the chosen distribution,
  the right-hand side of the above expression is \emph{independent} of this
  distribution! 

\end{itemize}

Empirically, it actually appears that much stronger results than
in~\eqref{eq:permanent:ratio:random:matrix:1} can potentially be
proven. Namely, the plot in Fig.~\ref{fig:simulation:results:1} shows the
following:
\begin{itemize}

\item For $n = 5$, we randomly generated $1000$ matrices $\matr{A}$ of size
  $n \times n$, where the entries are i.i.d.\ according to the uniform
  distribution in the interval $[0,1]$. For each matrix $\matr{A}$, we plotted
  a red circle at the location
  $\bigl( \perm(\matr{A}), \perm_{\Bethe,2}(\matr{A}) \bigr)$ and a cyan
  triangle at the location
  $\bigl( \perm(\matr{A}), \perm_{\Bethe}(\matr{A}) \bigr)$.

\item Let $\gamma_{\Bethe,2}(n)$ be the quantity defined
  in~\eqref{eq:permanent:ratio:random:matrix:1}. The solid black line going
  through the cluster of red circles is the location of pairs
  $\bigl( \perm(\matr{A}), \perm_{\Bethe,2}(\matr{A}) \bigr)$ for which
  $\perm(\matr{A}) / \perm_{\Bethe,2}(\matr{A}) = \gamma_{\Bethe,2}(n)$.

\item Let $\gamma'_{\Bethe,2}(n)$ be the quantity defined
  in~\eqref{eq:permanent:ratio:random:matrix:1}. The dashed black line going
  through the cluster of red circles is the location of pairs
  $\bigl( \perm(\matr{A}), \perm_{\Bethe,2}(\matr{A}) \bigr)$ for which
  $\perm(\matr{A}) / \perm_{\Bethe,2}(\matr{A}) = \gamma'_{\Bethe,2}(n)$.

\item The solid and dashed black lines going through the cluster of cyan
  triangles are similar to the solid and dashed red lines going through the
  cluster of red circles, respectively. They are based on our conjecture that
  the value of the ratio $\perm(\matr{A}) / \perm_{\Bethe}(\matr{A})$ follows
  closely the value of the ratio
  $\perm(\matr{1}_{n \times n}) / \perm_{\Bethe}(\matr{1}_{n \times n})$.

\end{itemize}

Motivated by Fig.~\ref{fig:simulation:results:1}, we leave it to future
research to make stronger analytical statements than
in~\eqref{eq:permanent:ratio:random:matrix:1} w.r.t.\ the distribution of the
ratio $\perm(\matr{A}) / \perm_{\Bethe,2}(\matr{A})$, about the distribution
of the ratio $\perm(\matr{A}) / \perm_{\Bethe}(\matr{A})$, etc. Such results
will contribute toward rigorously justifying the empirical observation of the
usefulness of the Bethe approximation of the permanent (see, e.g.,
\cite{Chertkov:Kroc:Vergassola:08:1,Huang:Jebara:09:1,Watanabe:Chertkov:10:1,
  6283654,6804280,Williams:Lau:14:1,JMLR:v14:chertkov13a,7908949}). Some of
these papers replace the (usually intractable) optimization problem
$\arg \max_{\matr{A} \in \setset{A}} \perm(\matr{A})$ by the more tractable
optimization problem
$\arg \max_{\matr{A} \in \setset{A}} \perm_{\Bethe}(\matr{A})$, where
$\setset{A}$ is a set of matrices of interest. For this approximation to work
well, the value of the ratio $\perm(\matr{A}) / \perm_{\Bethe}(\matr{A})$ is
irrelevant as long as it is nearly the same for all matrices
$\matr{A} \in \setset{A}$.

It is worthwhile to mention that various results for the ratio
$\perm(\matr{A}) / \perm_{\Bethe}(\matr{A})$ have been developed in the
past. In particular, lower and upper bounds on this ratio can be found
in~\cite{Gurvits:11:2,Gurvits:Samorodnitsky:14:1,8948682,Straszak:Vishnoi:19:1}
for arbitrary non-negative matrices $\matr{A}$ of size $n \times n$ and
in~\cite{pmlr-v134-anari21a} for non-negative matrices $\matr{A}$ of size
$n \times n$ and of a given non-negative matrix rank. While rather
non-trivial, these results are not strong enough / not suitable to derive the
results mentioned above.

This paper is structured as follows: in Section~\ref{sec:nfg:1} we discuss a
family of normal factor graphs whose partition function equals the permanent
of a non-negative matrix $\matr{A}$. Afterwards, we apply a technique
from~\cite{7746637} for analyzing $\perm_{\Bethe,2}(\matr{A})$: in
Section~\ref{sec:double:cover:based:analysis:1} for general non-negative
matrices, in Section~\ref{sec:all:one:matrix:1} for all-one matrices, and in
Section~\ref{sec:iid:matrices:1} for random non-negative matrices with i.i.d.\
entries. We conclude the paper in Section~\ref{sec:conclusion:1}. Finally, we
have collected many of the proofs in the appendices.

The main tool of this paper are topological graph covers. Note that graph
covers have also been used in other contexts toward understanding and
quantifying the Bethe approximation of various quantities of interest. For
example, graph covers were used to analyze so-called log-supermodular
graphical models~\cite{Ruozzi:12:1,7746637} and weighted homomorphism counting
problems over bipartite graphs~\cite{Csikvari:Ruozzi:Shams:22:1}.

\section{Normal Factor Graph Representation}
\label{sec:nfg:1}

\begin{figure}
  \hspace{0.65cm}
  \begin{subfigure}{0.45\linewidth}
    \includegraphics[width=0.85\linewidth]{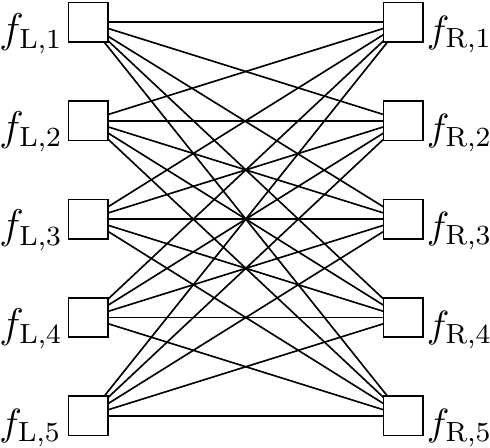}
  \end{subfigure}
  \begin{subfigure}{0.45\linewidth}
    \includegraphics[width=0.85\linewidth]{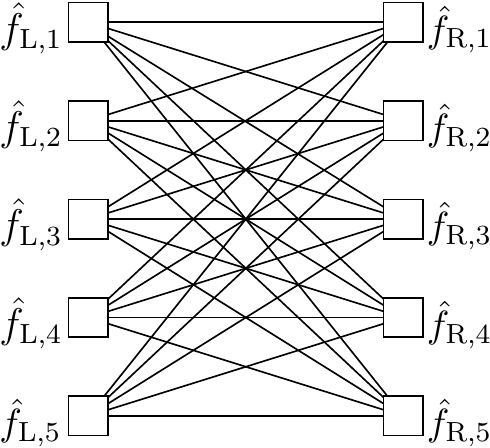}
  \end{subfigure}
  \vspace{0.25cm}
  \caption{NFGs used in Sections~\ref{sec:nfg:1} and
    \ref{sec:double:cover:based:analysis:1}. (Here, $n = 5$.)}
  \label{fig:ffg:permanent:1}
  \vspace{-0.5cm}
\end{figure}

We assume that the reader is familiar with the basics of factor
graphs~\cite{Kschischang:Frey:Loeliger:01,Forney:01:1,Loeliger:04:1}, in
particular with a variant of factor graphs called normal factor graphs (NFGs).
Recall that an NFG is a graph consisting of vertices (called function nodes)
and edges, where so-called local functions are associated with vertices and
where variables are associated with edges. The local functions are such that
their arguments are only variables associated with edges incident on the
corresponding factor node. The global function is then defined to be the
product of the local functions, and the partition function is defined to be
the sum of the global function over all possible assignments to the variables
associated with the edges.

Let $\matr{A} = (a_{i,j})$ be an arbitrary non-negative matrix of size
$n \times n$. There are various ways of defining an NFG such that its
partition function equals $\perm(\matr{A})$. In this paper, we use the NFG
$\graphN(\matr{A})$ in Fig.~\ref{fig:ffg:permanent:1}~(left), which is the
same as in~\cite{6352911}:
\begin{itemize}

\item The NFG $\graphN(\matr{A})$ is based on a complete bipartite graph with
  two times $n$ vertices.

\item For every $i,j \in [n]$, let the variable associated with the
  edge connecting $f_{\mathrm{L},i}$ with $f_{\mathrm{R},j}$ be called
  $x_{i,j}$ and take value in the set $\setset{X} \defeq \{ 0, 1 \}$.

\item For every $i \in [n]$, let
  \begin{align*}
    \!\!\!\!\!
    f_{\mathrm{L},i}(x_{i,1}, \ldots, x_{i,n})
      &\defeq
         \begin{cases}\!
           \sqrt{a_{i,j}} 
             & \!\!\!\!
               \begin{array}{c}
                 \text{$\exists j \in [n]$ s.t. 
                       $x_{i,j} = 1$;} \\
                 \text{$x_{i,j'} = 0$, 
                       $\forall j' \in [n] \setminus \{ j \}$}
               \end{array} \\[0.50cm]
           0 & \text{(otherwise)}
         \end{cases}
  \end{align*}
  For every $j \in [n]$, let
  \begin{align*}
    \!\!\!\!\!
    f_{\mathrm{R},j}(x_{1,j}, \ldots, x_{n,j})
      &\defeq
         \begin{cases}\!
           \sqrt{a_{i,j}} 
             & \!\!\!\!
               \begin{array}{c}
                 \text{$\exists i \in [n]$ s.t. 
                       $x_{i,j} = 1$;} \\
                 \text{$x_{i',j} = 0$, 
                       $\forall i' \in [n] \setminus \{ i \}$}
               \end{array} \\[0.50cm]
           0 & \text{(otherwise)}
         \end{cases}
  \end{align*}

\item The global function is defined to be
  \begin{align*}
    g(x_{1,1}, x_{1,2}, \ldots, x_{n,n})
      &\defeq
         \left(
           \prod_{i \in [n]}
             f_{\mathrm{L},i}(x_{i,1}, \ldots, x_{i,n})
         \right) \\
      &\hspace{0.25cm}
         \cdot
         \left(
           \prod_{j \in [n]}
             f_{\mathrm{R},j}(x_{1,j}, \ldots, x_{n,j})
         \right).
  \end{align*}

\item The partition function (or partition sum) is defined to be
  \begin{align*}
    Z(\graph{N})
      &\defeq
         \sum_{x_{1,1}, x_{1,2}, \ldots, x_{n,n}}
           g(x_{1,1}, x_{1,2}, \ldots, x_{n,n}).
  \end{align*}

\end{itemize}
One can verify the following:
\begin{itemize}

\item $g(x_{1,1}, x_{1,2}, \ldots, x_{n,n}) = \prod_{i \in [n]} a_{i,\sig(i)}$
  if there exists a permutation $\sig \in \setset{S}_n$ such that for all
  $i, j \in [n]$ either $x_{i,j} = 1$ if $j = \sig(i)$ or $x_{i,j} = 0$
  otherwise.

\item $g(x_{1,1}, x_{1,2}, \ldots, x_{n,n}) = 0$ if there exists no such
  permutation $\sig \in \setset{S}_n$.

\item $Z(\graph{N}) = \perm(\matr{A})$.

\end{itemize}

For NFGs whose local functions take on non-negative values, the
paper~\cite{Yedidia:Freeman:Weiss:05:1} introduced the Bethe approximation
$Z_{\Bethe}(\graphN)$ of the partition function $Z(\graphN)$ as the solution
of some optimization problem derived from~$\graphN$. In~\cite{6570731} it was
then shown that $Z_{\Bethe}(\graphN)$ has the following combinatorial
characterization:
\begin{align}
  Z_{\Bethe}(\graphN)
    &= \limsup_{M \to \infty} \ 
         Z_{\Bethe,M}(\graphN),
           \label{eq:ZBethe:intro:1} \\
  Z_{\Bethe,M}(\graphN)
    &\defeq
       \sqrt[M]{\Big\langle \!
                  Z(\graph{\tilde N})
                \! \Big\rangle_{\graph{\tilde N} \in \setset{\tilde N}_{M}}},
                             \label{eq:ZBethe:intro:2}
\end{align}
where the expression under the root sign represents the (arithmetic) average
of $Z(\graph{\tilde N})$ over all $M$-covers $\graph{\tilde N}$ of $\graphN$,
$M \geq 1$. For the details of the definition of (topological) graph covers,
we refer to~\cite{6570731}.

Interestingly, in the context of the NFG $\graphN(\matr{A})$, the expressions
in~\eqref{eq:ZBethe:intro:2} turns into the expression
in~\eqref{eq:ZBethe:perm:intro:2}, where for positive integers $M$ we have
defined
\begin{align*}
  \matr{A}^{\! \uparrow \matr{\tilde P}}
    &\defeq
       \begin{pmatrix}
         a_{1,1} \matr{\tilde P}^{(1,1)}
           & \cdots
           & a_{1,n} \matr{\tilde P}^{(1,n)} \\
         \vdots                   
           &
           &  \vdots \\
         a_{n,1} \matr{\tilde P}^{(n,1)} 
           & \cdots
           & a_{n,n} \matr{\tilde P}^{(n,n)}
       \end{pmatrix}, \\
  \tilde \Phi_M
    &\defeq
       \left\{
          \matr{\tilde P} 
            = \big\{ \matr{\tilde P}^{(i,j)} \big\}_{i \in [n], j \in [n]}
          \ \middle| \ 
          \matr{\tilde P}^{(i,j)} \in \setset{P}_{M \times M}
       \right\}, \\
  \setset{P}_{M \times M}
    &\defeq
       \text{(set of permutation matrices of size $M \times M$)}.
\end{align*}
Note that the matrix $\matr{A}^{\! \uparrow \matr{\tilde P}}$ has size
$(Mn) \times (Mn)$. (For more details, see the discussion in
\cite[Section~VI]{6570731}.)

In the rest of this paper, we will analyze double covers of
$\graphN(\matr{A})$, i.e., graph covers of $\graphN(\matr{A})$ for $M = 2$.

\section{Double-Cover-Based Analysis: General Case}
\label{sec:double:cover:based:analysis:1}

The paper~\cite{7746637} introduced a technique for analyzing double covers of
an arbitrary NFG $\graphN$. We refer the interested reader to~\cite{7746637}
for all the technical details. Here we just state the main result when applied
to the NFG $\graphN(\matr{A})$.

Namely, using \cite[Theorem~4]{7746637}, one obtains the following result.
(Note that on the right-hand side of~\eqref{eq:double:cover:1} only a single
NFG appears, which is in contrast to the right-hand side
of~\eqref{eq:ZBethe:intro:2} that averages over multiple NFGs.)

\begin{proposition}
  \label{prop:average:double:cover:1}

  Let $\matr{A}$ be a non-negative matrix of size $n \times n$. It holds that
  \begin{align}
    \perm_{\Bethe,2}(\matr{A})
      &= \sqrt{Z\bigl( \hat \graphN(\matr{A}) \bigr)},
           \label{eq:double:cover:1}
  \end{align}
  where the NFG $\hat \graphN(\matr{A})$ is defined as follows (see
  Fig.~\ref{fig:ffg:permanent:1}~(right)):
  \begin{itemize}
  
  \item The NFG $\hat \graphN(\matr{A})$ is based on a complete bipartite graph
    with two times $n$ vertices, i.e., the same graph underlying
    $\graphN(\matr{A})$.
  
  \item For every $i,j \in [n]$, let the variable associated with the edge
    connecting $\hat f_{\mathrm{L},i}$ with $\hat f_{\mathrm{R},j}$ be called
    $\hat x_{i,j}$ and take value in the set
    \begin{align*}
      \setset{\hat X} 
        &\defeq
           \setset{X} \times \setset{X}
         = \bigl\{ (0,0), \ (0,1), \ (1,0), \ (1,1) \bigr\}.
    \end{align*}
  
  \item For every $i \in [n]$, let
    $\hat f_{\mathrm{L},i}(\hat x_{i,1}, \ldots, \hat x_{i,n}) \defeq$
    \begin{align*}
        &
           \begin{cases}
             a_{i,j}
               & \begin{array}{c}
                   \text{$\exists j \in [n]$ s.t. 
                         $\hat x_{i,j} = (1,1)$;} \\
                   \text{$\hat x_{i,j'} = (0,0)$, 
                         $\forall j' \in [n] \setminus \{ j \}$}
                 \end{array} \\[0.50cm]
             \sqrt{a_{i,j} a_{i,j'}}
               & \begin{array}{c}
                   \text{$\exists j, j' \in [n]$, $j \neq j'$, s.t.} \\
                   \text{$\hat x_{i,j} = (0,1)$, $\hat x_{i,j'} = (0,1)$;} \\
                   \text{$\hat x_{i,j''} = (0,0)$, 
                         $\forall j'' \in [n] \setminus \{ j, j' \}$}
                 \end{array} \\[0.75cm]
             0 & \text{(otherwise)}
           \end{cases}
    \end{align*}

  \item For every $j \in [n]$, let
    $\hat f_{\mathrm{R},j}(\hat x_{1,j}, \ldots, \hat x_{n,j}) \defeq$
    \begin{align*}
        &
           \begin{cases}
             a_{i,j}
               & \begin{array}{c}
                   \text{$\exists i \in [n]$ s.t. 
                         $\hat x_{i,j} = (1,1)$;} \\
                   \text{$\hat x_{i',j} = (0,0)$, 
                         $\forall i' \in [n] \setminus \{ i \}$}
                 \end{array} \\[0.50cm]
             \sqrt{a_{i,j} a_{i',j}}
               & \begin{array}{c}
                   \text{$\exists i, i' \in [n]$, $i \neq i'$, s.t.} \\
                   \text{$\hat x_{i,j} = (0,1)$, $\hat x_{i',j} = (0,1)$;} \\
                   \text{$\hat x_{i'',j} = (0,0)$, 
                         $\forall i'' \in [n] \setminus \{ i, i' \}$}
                 \end{array} \\[0.75cm]
             0 & \text{(otherwise)}
           \end{cases}
    \end{align*}
  
  \item The global function is defined to be
    \begin{align*}
      \hat g(\hat x_{1,1}, \hat x_{1,2}, \ldots, \hat x_{n,n})
        &\defeq
           \left(
             \prod_{i \in [n]}
               \hat f_{\mathrm{L},i}(\hat x_{i,1}, \ldots, \hat x_{i,n})
           \right) \\
        &\hspace{0.25cm}
           \cdot
           \left(
             \prod_{j \in [n]}
               \hat f_{\mathrm{R},j}(\hat x_{1,j}, \ldots, \hat x_{n,j})
           \right).
    \end{align*}
  
  \item The partition function (or partition sum) is defined to be
    \begin{align*}
      Z(\graph{\hat N})
        &\defeq
           \sum_{\hat x_{1,1}, \hat x_{1,2}, \ldots, \hat x_{n,n}}
             \hat g(\hat x_{1,1}, \hat x_{1,2}, \ldots, \hat x_{n,n}).
    \end{align*}
  
  \end{itemize}
\end{proposition}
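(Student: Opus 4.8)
The plan is to apply \cite[Theorem~4]{7746637} to the NFG $\graphN(\matr{A})$ and then identify the resulting ``averaged double-cover'' NFG explicitly with $\hat\graphN(\matr{A})$. Recall that the content of that theorem is that $Z_{\Bethe,2}(\graphN)$, which by definition in~\eqref{eq:ZBethe:intro:2} is the square root of the average of $Z(\graph{\tilde N})$ over all $2$-covers $\graph{\tilde N}$ of $\graphN$, can be rewritten as the square root of the partition function of a single NFG $\hat\graphN$ obtained from $\graphN$ by a local, node-by-node transformation. Concretely, in that construction each edge of $\graphN$ carrying a variable over $\setset X$ is replaced by an edge carrying a variable over $\setset X\times\setset X$ (accounting for the two sheets of the cover together with the two possible ways the cover can be ``glued'' along that edge, i.e.\ identity versus transposition on the fiber), and each local function $f$ on a node is replaced by a new local function $\hat f$ whose value on a configuration of the doubled variables is the average, over the relevant $2\times 2$ permutation matrices sitting on the edges incident to that node, of the product of the two copies of $f$ induced by that configuration. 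Since $\perm_{\Bethe,2}(\matr{A}) = Z_{\Bethe,2}(\graphN(\matr{A}))$ by~\eqref{eq:ZBethe:perm:intro:2} and the discussion identifying~\eqref{eq:ZBethe:intro:2} with~\eqref{eq:ZBethe:perm:intro:2}, equation~\eqref{eq:double:cover:1} will follow once the transformed NFG is shown to be exactly $\hat\graphN(\matr{A})$.

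The core of the argument is therefore the explicit computation of the transformed local functions $\hat f_{\mathrm L,i}$ and $\hat f_{\mathrm R,j}$, and by the symmetry between rows and columns it suffices to treat $\hat f_{\mathrm L,i}$. First I would set up the alphabet identification: the doubled variable $\hat x_{i,j}\in\setset{\hat X}$ encodes the pair consisting of the value of the two lifted copies of $x_{i,j}$; the four values $(0,0),(0,1),(1,0),(1,1)$ correspond respectively to ``absent on both sheets'', ``present on exactly one sheet'' (two cases, distinguished by which sheet), and ``present on both sheets''. Then, for a fixed configuration $(\hat x_{i,1},\dots,\hat x_{i,n})$, I would write out the averaged local value as
\begin{align*}
  \hat f_{\mathrm L,i}(\hat x_{i,1},\dots,\hat x_{i,n})
    &= \frac{1}{2}\sum_{\pi}
         f_{\mathrm L,i}\bigl(x^{(1)}_{i,1},\dots,x^{(1)}_{i,n}\bigr)\,
         f_{\mathrm L,i}\bigl(x^{(2)}_{i,1},\dots,x^{(2)}_{i,n}\bigr),
\end{align*}
where $\pi$ ranges over the gluing choices compatible with $7746637$'s bookkeeping and $x^{(1)},x^{(2)}$ are the two sheet-restrictions of the configuration. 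Because $f_{\mathrm L,i}$ is an indicator-times-weight supported on weight-one configurations with weight $\sqrt{a_{i,j}}$, the product $f_{\mathrm L,i}(x^{(1)})\,f_{\mathrm L,i}(x^{(2)})$ is nonzero only when each of the two sheets selects exactly one column. This leaves exactly two cases: both sheets pick the same column $j$ — giving $\hat x_{i,j}=(1,1)$ and all others $(0,0)$, with product $\sqrt{a_{i,j}}\cdot\sqrt{a_{i,j}}=a_{i,j}$; or the two sheets pick distinct columns $j\ne j'$ — giving $\hat x_{i,j}=\hat x_{i,j'}=(0,1)$ (after the gluing convention is absorbed) and all others $(0,0)$, with product $\sqrt{a_{i,j}}\cdot\sqrt{a_{i,j'}}=\sqrt{a_{i,j}a_{i,j'}}$. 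Every other configuration forces a zero in the product. Tracking the factor $\tfrac12$ together with the number of gluing choices that realize each nonzero case shows the prefactor is exactly $1$, reproducing the case distinction in the statement of $\hat f_{\mathrm L,i}$; the computation for $\hat f_{\mathrm R,j}$ is verbatim the same with the roles of $i$ and $j$ swapped.

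The main obstacle is bookkeeping rather than ideas: one has to be careful about how \cite[Theorem~4]{7746637} encodes the ``transposition'' gluings on the fibers of the cover, so that the $(0,1)$-versus-$(1,0)$ distinction on a single edge is handled consistently across the two incident nodes and does not get double-counted, and so that the combinatorial factor multiplying each surviving term works out to $1$ rather than $\tfrac12$ or $2$. Once the convention is pinned down (the natural choice being to let $(0,1)$ denote ``present on the sheet determined by the node's local gluing''), the rest is the routine case analysis above. Finally, assembling $\hat g$ as the product of the $\hat f_{\mathrm L,i}$ and $\hat f_{\mathrm R,j}$ and summing over all $\hat x_{i,j}\in\setset{\hat X}$ gives $Z(\hat\graphN(\matr A))$, and substituting into $\perm_{\Bethe,2}(\matr A)=\sqrt{Z(\hat\graphN(\matr A))}$ completes the proof.
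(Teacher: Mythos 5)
Your overall strategy coincides with the paper's: apply \cite[Theorem~4]{7746637} to $\graphN(\matr{A})$ and identify the resulting single NFG with $\hat\graphN(\matr{A})$ by computing the transformed local functions, treating only $\hfL{i}$ by symmetry. However, the step you defer as ``bookkeeping'' is precisely where the actual content of the proof lies, and as written your argument does not close.

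Two concrete issues. First, the averaging over gluings in \cite{7746637} is attached to the \emph{edges}, not to the nodes: each edge of the base graph independently carries either the identity or the swap on its two-element fiber, encoded by the edge functions $\matr{\tilde{E}}_{\mathrm{nocross}}$ and $\matr{\tilde{E}}_{\mathrm{cross}}$ with an averaging weight of $\tfrac12$ per edge. Your formula $\hfL{i}=\tfrac12\sum_{\pi}f(x^{(1)})f(x^{(2)})$ places a single node-level average over the gluings of all incident edges; taken literally this double-counts each edge's gluing (once at each endpoint) and cannot produce the correct normalization. Second, and more importantly, the transformed node function $\hfL{i}$ obtained from the construction is \emph{not} the function in the statement of the proposition: it additionally takes the value $-\sqrt{a_{i,j}a_{i,j'}}$ on configurations with two coordinates equal to $(1,0)$. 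The reason these entries may be replaced by zero is not a choice of convention for which sheet $(0,1)$ refers to --- the gluing is a property of the edge, so the two endpoints of an edge genuinely see opposite orderings, and no node-local convention resolves this. Rather, the paper first applies the orthogonal (holographic) transform $\matr{T}_{\Phi}$ edge-wise, under which the averaged edge function becomes the projection $\matr{T}_{\hat{E}_e}=\tfrac12(\matr{\hat{E}}_{\mathrm{nocross}}+\matr{\hat{E}}_{\mathrm{cross}})$ whose row and column indexed by $(1,0)$ vanish; only then can the $(1,0)$-supported (negative) entries of $\hfL{i}$ be discarded without changing $Z\bigl(\hat\graphN(\matr{A})\bigr)$. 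Without invoking this transform and the resulting cancellation, your case analysis omits a nonzero case of $\hfL{i}$ and the claimed prefactor of $1$ is not justified.
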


\begin{proof}
  See Appendix~\ref{sec:proof:prop:average:double:cover:1}
\end{proof}

Using a mapping from $\setset{S}_n \times \setset{S}_n$ to the set of valid
configurations of $Z(\graph{\hat N})$, i.e., the set of configurations
$(\hat x_{1,1}, \hat x_{1,2}, \ldots, \hat x_{n,n})$ that yield
$\hat g(\hat x_{1,1}, \hat x_{1,2}, \ldots, \hat x_{n,n}) \neq 0$, allows us
to reformulate Proposition~\ref{prop:average:double:cover:1} as follows.

\begin{proposition}
  \label{prop:average:double:cover:2}

  Let $\matr{A}$ be a non-negative matrix of size $n \times n$. It holds that
  \begin{align}
    \frac{\perm_{\Bethe,2}(\matr{A})}
         {\perm(\matr{A})}
      &= \sqrt
         {
           \sum_{\sig_1, \sig_2 \in \Sn}
             p(\sig_1) 
             \cdot
             p(\sig_2)
             \cdot
             2^{-c(\sig_1,\sig_2)}
         },
  \end{align}
  where
  \begin{align}
    p(\sigma)
      &\defeq
         \frac{\ \
               \prod\limits_{i \in [n]} 
                 a_{i,\sig(i)} 
               \ \ 
              }
              {\perm(\matr{A})}
  \end{align}
  is the probability mass function on $\setset{S}_n$ induced by $\matr{A}$ and
  where $c(\sig_1,\sig_2)$ is the number of cycles of length larger than one
  in the cycle notation expression of the permutation
  $\sig_1 \circ \sig_2^{-1}$, or, equivalently, where $c(\sig_1,\sig_2)$ is
  the number of orbits of length larger than one of the permutation
  $\sig_1 \circ \sig_2^{-1}$.
\end{proposition}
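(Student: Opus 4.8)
The starting point is Proposition~\ref{prop:average:double:cover:1}, which gives $\perm_{\Bethe,2}(\matr{A}) = \sqrt{Z(\hat\graphN(\matr{A}))}$, so it suffices to show that
\begin{align*}
  Z\bigl(\hat\graphN(\matr{A})\bigr)
    &= \sum_{\sig_1,\sig_2 \in \Sn}
         \Bigl( \prod_{i \in [n]} a_{i,\sig_1(i)} \Bigr)
         \Bigl( \prod_{i \in [n]} a_{i,\sig_2(i)} \Bigr)
         \, 2^{-c(\sig_1,\sig_2)},
\end{align*}
after which dividing both sides by $\perm(\matr{A})^2$ and taking square roots yields the claim. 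The core of the argument is to understand the valid configurations $\hvx = (\hat x_{1,1},\ldots,\hat x_{n,n})$, i.e.\ those with $\hat g(\hvx) \neq 0$. First I would analyze a single left function node $\hfL{i}$: reading off its definition, a configuration restricted to row $i$ is admissible iff either exactly one entry is $(1,1)$ and the rest are $(0,0)$, or exactly two entries are $(0,1)$ and the rest are $(0,0)$. In either case, no entry ever takes the value $(1,0)$ in a valid configuration, and each coordinate of $\hat x_{i,j}$ — call them $x^{(1)}_{i,j}$ and $x^{(2)}_{i,j}$ — individually forms a permutation pattern in row $i$: the first coordinates have exactly one $1$ per row, and so do the second coordinates (the $(0,1)$-pair contributes two $1$'s, matching a transposition). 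The same holds column-wise from the $\hfR{j}$ nodes.

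Next I would make the bijection precise. Given $(\sig_1,\sig_2) \in \Sn \times \Sn$, I claim there are exactly $2^{c(\sig_1,\sig_2)}$ valid configurations $\hvx$ whose ``first coordinate permutation'' is $\sig_1$ and whose ``second coordinate permutation'' is $\sig_2$, and each contributes weight $\prod_{i} a_{i,\sig_1(i)} \cdot \prod_{i} a_{i,\sig_2(i)}$ to $Z(\hat\graphN(\matr{A}))$. To see this, consider the permutation $\tau \defeq \sig_1 \circ \sig_2^{-1}$ and decompose it into orbits. On a fixed point of $\tau$ (i.e.\ where $\sig_1(i) = \sig_2(i) = j$), the only admissible local choice is $\hat x_{i,j} = (1,1)$, contributing the factor $a_{i,j}$ which equals $\sqrt{a_{i,j}a_{i,j}} = \sqrt{a_{i,\sig_1(i)} a_{i,\sig_2(i)}}$ — forced, no freedom. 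On an orbit of $\tau$ of length $\geq 2$, the positions touched by $\sig_1$ and by $\sig_2$ interleave around a cycle, and one checks there are exactly two ways to ``route'' the $(0,1)$-labels consistently around that cycle so that every left node sees exactly two $(0,1)$'s in the right place and likewise every right node; the product of the $\sqrt{a_{i,\sig_1(i)} a_{i,\sig_2(i)}}$-type factors over such an orbit is independent of which of the two routings is chosen, and globally multiplies out to $\prod_i \sqrt{a_{i,\sig_1(i)}} \sqrt{a_{i,\sig_2(i)}}$ once each orbit's contribution is collected. Multiplying across all $c(\sig_1,\sig_2)$ nontrivial orbits gives the factor $2^{c(\sig_1,\sig_2)}$, and the total weight over all these configurations is $2^{c(\sig_1,\sig_2)} \cdot \prod_i a_{i,\sig_1(i)} \cdot \prod_i a_{i,\sig_2(i)}$.

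To finish, I would sum over all $(\sig_1,\sig_2)$. I need the map $\hvx \mapsto (\sig_1,\sig_2)$ to be well-defined on all valid configurations (every valid $\hvx$ induces a well-defined pair of permutations, from the per-row / per-column analysis above) and surjective with the stated fibre sizes and weights; this partitions the set of valid configurations and shows $Z(\hat\graphN(\matr{A}))$ equals the displayed double sum.

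\textbf{Main obstacle.} The routine bookkeeping — that $(1,0)$ never appears, that the two coordinate-projections are genuine permutations, that weights bundle correctly — is straightforward. The delicate step is the orbit-by-orbit count: showing that an orbit of $\tau = \sig_1 \circ \sig_2^{-1}$ of length $\geq 2$ admits \emph{exactly two} consistent assignments of $(0,1)$-labels (not one, not four), and that fixed points admit exactly one. Concretely, along a nontrivial cycle the edges used by $\sig_1$ and the edges used by $\sig_2$ form a single alternating closed walk in the bipartite graph on the vertices involved; the $(0,1)$ label has to be placed on a perfect matching of that cycle, and an even cycle has exactly two perfect matchings — this is where the $2$ (hence the $2^{-c}$ after normalization) comes from. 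Verifying that these two matchings are precisely the label assignments making every incident $\hfL{i}$ and $\hfR{j}$ evaluate to its nonzero ``$\sqrt{a a'}$'' branch, and that degenerate situations (e.g.\ $2$-cycles, or indices shared between several features) don't break the count, is the part that needs care.
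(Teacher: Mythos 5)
Your overall architecture matches the paper's: reduce via Proposition~\ref{prop:average:double:cover:1} to evaluating $Z\bigl(\hat\graphN(\matr{A})\bigr)$, classify the valid configurations into $(1,1)$-edges and $(0,1)$-cycles, tie the cycles to the orbits of $\sig_1 \circ \sig_2^{-1}$, and extract a factor of two per nontrivial orbit. But the crucial counting step is set up in the wrong direction, and taken literally it produces $2^{+c(\sig_1,\sig_2)}$ in the sum rather than $2^{-c(\sig_1,\sig_2)}$. The correct statement (and the one the paper proves) is that the map $(\sig_1,\sig_2) \mapsto \hvx$ is well defined --- each pair determines a \emph{unique} valid configuration, namely $\hat x_{i,j}=(1,1)$ where $\sig_1(i)=\sig_2(i)=j$, $(0,1)$ where $j$ is the image of $i$ under exactly one of the two permutations, $(0,0)$ otherwise --- and is $2^{c(\sig_1,\sig_2)}$-to-one: a configuration with $c$ cycles has $2^{c}$ pre-image pairs, because each $(0,1)$-cycle can be split into its two perfect matchings (equivalently, traversed in two directions), and the choice of which matching belongs to $\sig_1$ is independent per cycle. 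Summing over pairs therefore counts each configuration $2^{c}$ times, which is exactly where the $2^{-c}$ comes from. You assert the opposite on both counts: that a fixed pair $(\sig_1,\sig_2)$ yields $2^{c}$ distinct configurations, and that $\hvx \mapsto (\sig_1,\sig_2)$ is a well-defined map whose fibres have size $2^{c}$. Neither holds (already for $n=2$, $\sig_1=\mathrm{id}$, $\sig_2=(12)$ there is exactly one associated configuration, the all-$(0,1)$ $4$-cycle, and it has two pre-image pairs), and your bookkeeping would then give $Z = \sum_{\sig_1,\sig_2} 2^{+c}\prod_i a_{i,\sig_1(i)}\prod_i a_{i,\sig_2(i)}$, which is false.

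Two subsidiary claims feeding this confusion are also incorrect. First, the coordinate projections of $\hat x_{i,j}$ are not permutations: a row containing two $(0,1)$'s and no $(1,1)$ has zero ones in the first coordinate and two ones in the second, so there is no ``first coordinate permutation'' from which to read off $\sig_1$; this is precisely why a configuration does not determine the pair. Second, in a valid configuration the $(0,1)$-labels do not sit on a perfect matching of the alternating cycle --- by the definition of $\hfL{i}$ and $\hfR{j}$, every vertex on a nontrivial orbit must see \emph{exactly two} $(0,1)$-edges, so the entire even cycle carries the label $(0,1)$. The two perfect matchings of that cycle are not two configurations; they are the two ways of decomposing the single configuration's cycle into a $\sig_1$-half and a $\sig_2$-half, i.e.\ the two pre-images. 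You are applying intuition from the untransformed double cover (where a configuration genuinely is a pair of permutations) to the $\Phi$-transformed NFG $\hat\graphN(\matr{A})$, where it no longer applies. The fix is to reverse the roles: define the map from pairs to configurations, prove the fibre size $2^{c}$ and the weight identity $\hat g\bigl(h(\sig_1,\sig_2)\bigr)=\prod_i a_{i,\sig_1(i)}\prod_i a_{i,\sig_2(i)}$, and then rewrite $\sum_{\hvx}\hat g(\hvx)$ as $\sum_{\sig_1,\sig_2}2^{-c(\sig_1,\sig_2)}\hat g\bigl(h(\sig_1,\sig_2)\bigr)$.
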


\begin{proof}
  See Appendix~\ref{sec:proof:prop:average:double:cover:2}.
\end{proof}

In the following, we evaluate the expression in
Proposition~\ref{prop:average:double:cover:2} for different setups.

\section{Double-Cover-Based Analysis: \\
              All-One Matrix}
\label{sec:all:one:matrix:1}

We have the following result for the all-one matrix of size $n \times
n$. (Note that~\eqref{eq:double:cover:Bethe:perm:all:one:matrix:1:1} was
already proven in~\cite[Lemma~48]{6570731}.)

\begin{theorem}
  \label{theorem:all:one:matrix:1}

  Let $\matr{A} \defeq \matr{1}_{n \times n}$, i.e., the all-one matrix of size
  $n \times n$. It holds that
  \begin{align}
    \frac{\perm(\matr{A})}
         {\perm_{\Bethe}(\matr{A})}
      &\sim
         \sqrt{\frac{2\pi n}{\e}},
           \label{eq:double:cover:Bethe:perm:all:one:matrix:1:1} \\
    \frac{\perm(\matr{A})}
         {\perm_{\Bethe,2}(\matr{A})}
      &\sim
         \sqrt[4]{\frac{\pi n}{\e}},
           \label{eq:double:cover:Bethe:perm:all:one:matrix:1:2} \\
    \frac{\perm_{\Bethe,2}(\matr{A})}
         {\perm_{\Bethe}(\matr{A})}
      &\sim
         \sqrt{2} \cdot \sqrt[4]{\frac{\pi n}{\e}}.
           \label{eq:double:cover:Bethe:perm:all:one:matrix:1:3}
  \end{align}
\end{theorem}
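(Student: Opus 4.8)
The three asymptotic statements are not independent: once any two of them are established, the third follows by the factorization in~\eqref{eq:Z:ratios:1}. Since~\eqref{eq:double:cover:Bethe:perm:all:one:matrix:1:1} is already known from~\cite[Lemma~48]{6570731}, the heart of the matter is to prove~\eqref{eq:double:cover:Bethe:perm:all:one:matrix:1:2}, and then~\eqref{eq:double:cover:Bethe:perm:all:one:matrix:1:3} drops out as the quotient of~\eqref{eq:double:cover:Bethe:perm:all:one:matrix:1:1} and~\eqref{eq:double:cover:Bethe:perm:all:one:matrix:1:2}. So I would state that reduction first and then focus all the work on~\eqref{eq:double:cover:Bethe:perm:all:one:matrix:1:2}.

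**Step 1: specialize Proposition~\ref{prop:average:double:cover:2}.** For $\matr{A} = \matr{1}_{n \times n}$ every product $\prod_i a_{i,\sig(i)}$ equals $1$, so $\perm(\matr{A}) = n!$ and $p(\sigma) = 1/n!$ for every $\sigma \in \Sn$. Hence
\begin{align*}
  \frac{\perm_{\Bethe,2}(\matr{A})^2}{\perm(\matr{A})^2}
    &= \frac{1}{(n!)^2}
       \sum_{\sig_1, \sig_2 \in \Sn}
         2^{-c(\sig_1,\sig_2)}
     = \frac{1}{n!}
       \sum_{\pi \in \Sn}
         2^{-c(\pi)},
\end{align*}
where the second equality uses the change of variables $\pi \defeq \sig_1 \circ \sig_2^{-1}$ (for each fixed $\pi$ there are exactly $n!$ pairs $(\sig_1,\sig_2)$ with $\sig_1 \circ \sig_2^{-1} = \pi$) and $c(\pi)$ now denotes the number of cycles of length larger than one in $\pi$. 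So everything reduces to understanding the exponential generating behaviour of the sum $S_n \defeq \sum_{\pi \in \Sn} 2^{-c(\pi)}$, and the claim~\eqref{eq:double:cover:Bethe:perm:all:one:matrix:1:2} becomes $\sqrt{n! / S_n} \sim \sqrt[4]{\pi n / \e}$, i.e. $S_n / n! \sim \sqrt{\e/(\pi n)}$.

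**Step 2: evaluate $S_n$ via exponential generating functions.** Group permutations by cycle type. If $\pi$ has $b_1$ fixed points and $b_k$ cycles of length $k$ for $k \ge 2$, then $2^{-c(\pi)} = 2^{-(b_2 + b_3 + \cdots)}$ and the number of permutations of a given cycle type is $n! / \prod_{k \ge 1} (k^{b_k} b_k!)$. Summing, the exponential generating function is
\begin{align*}
  \sum_{n \ge 0} \frac{S_n}{n!} \, z^n
    &= \exp\!\left( z + \tfrac{1}{2}\sum_{k \ge 2} \frac{z^k}{k} \right)
     = \exp\!\left( \tfrac{z}{2} - \tfrac{1}{2}\log(1-z) \right)
     = \frac{\e^{z/2}}{\sqrt{1-z}}.
\end{align*}
Extracting coefficients, $S_n / n! = [z^n] \, \e^{z/2} (1-z)^{-1/2}$; since $[z^n](1-z)^{-1/2} = \binom{2n}{n} 4^{-n} \sim 1/\sqrt{\pi n}$ and multiplication by the entire function $\e^{z/2}$ contributes the constant factor $\e^{1/2}$ in the $n \to \infty$ limit (a standard singularity-analysis / Tauberian argument, or a direct convolution estimate), we get $S_n / n! \sim \e^{1/2}/\sqrt{\pi n} = \sqrt{\e/(\pi n)}$, which is exactly what Step~1 requires.

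**Step 3: assemble the three asymptotics.** Taking square roots, $\perm(\matr{A}) / \perm_{\Bethe,2}(\matr{A}) = \sqrt{n!/S_n} \sim (\pi n / \e)^{1/4}$, giving~\eqref{eq:double:cover:Bethe:perm:all:one:matrix:1:2}. For~\eqref{eq:double:cover:Bethe:perm:all:one:matrix:1:3}, divide~\eqref{eq:double:cover:Bethe:perm:all:one:matrix:1:1} by~\eqref{eq:double:cover:Bethe:perm:all:one:matrix:1:2}:
\begin{align*}
  \frac{\perm_{\Bethe,2}(\matr{A})}{\perm_{\Bethe}(\matr{A})}
    &= \frac{\perm(\matr{A})/\perm_{\Bethe}(\matr{A})}
            {\perm(\matr{A})/\perm_{\Bethe,2}(\matr{A})}
     \sim \frac{\sqrt{2\pi n/\e}}{\sqrt[4]{\pi n/\e}}
     = \sqrt{2} \cdot \sqrt[4]{\frac{\pi n}{\e}}.
\end{align*}
The only genuinely delicate point is the transfer in Step~2 from the generating-function identity to the asymptotics of the coefficients, i.e.\ justifying that the factor $\e^{z/2}$ does not perturb the leading $n^{-1/2}$ behaviour beyond the constant $\e^{1/2}$. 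This is routine by singularity analysis (the dominant singularity is the square-root branch point at $z=1$, and $\e^{z/2}$ is analytic there), but if one prefers an elementary route, write $S_n/n! = \sum_{j=0}^n \frac{1}{2^j j!}\,[z^{n-j}](1-z)^{-1/2} = 4^{-n}\sum_{j=0}^n \frac{4^{j}}{2^j j!}\binom{2(n-j)}{n-j}$ and estimate the sum directly, using $\binom{2m}{m} \sim 4^m/\sqrt{\pi m}$ and dominated convergence after pulling out $4^{-n}$ against $\binom{2n}{n}$. Either way the obstacle is bookkeeping, not conceptual, so I would present the generating-function computation as the main content and cite a standard reference for the coefficient asymptotics.
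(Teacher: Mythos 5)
Your proposal is correct and follows essentially the same route as the paper's proof: reduce via Proposition~\ref{prop:average:double:cover:2} to the quantity $Z_n = \frac{1}{n!}\sum_{\sig} 2^{-c(\sig)}$, identify its exponential generating function as $\e^{t/2}(1-t)^{-1/2}$ via the cycle index of $\Sn$, and extract the coefficient asymptotics $Z_n \sim \sqrt{\e/(\pi n)}$ using the central binomial coefficient, with~\eqref{eq:double:cover:Bethe:perm:all:one:matrix:1:1} imported from \cite[Lemma~48]{6570731} and~\eqref{eq:double:cover:Bethe:perm:all:one:matrix:1:3} obtained by division. The one point you defer to ``singularity analysis or a direct convolution estimate'' is exactly the step the paper carries out in full (Appendix~\ref{sec:proof:theorem:all:one:matrix:1}, Lemmas~\ref{lemma:Zn-asym-lb} and~\ref{lemma:Zn-asym-ub}), by splitting the convolution sum at a threshold $h(n)$ with $h(n)=\omega(1)$ and $h(n)=o(n)$ and bounding the tail via Robbins' factorial estimates.
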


We see that understanding the ratio
$\frac{\perm(\matr{A})}{\perm_{\Bethe,2}(\matr{A})}$ goes a long way toward
understanding the ratio $\frac{\perm(\matr{A})}{\perm_{\Bethe}(\matr{A})}$.

\begin{proof}
  A sketch of the proof of this theorem is given below. For all the details of
  the proof, see Appendix~\ref{sec:proof:theorem:all:one:matrix:1}.
\end{proof}

Let us sketch the proof of Theorem~\ref{theorem:all:one:matrix:1}. First,
thanks to the special structure of the all-one matrix,
Proposition~\ref{prop:average:double:cover:2} simplifies as follows.

\begin{corollary}[of Proposition~\ref{prop:average:double:cover:2}]
  \label{cor:prop:average:double:cover:2:1}

  Let $\matr{A} = \matr{1}_{n \times n}$. It holds that
  \begin{align}
    \frac{\perm_{\Bethe,2}(\matr{A})}
         {\perm(\matr{A})}
      &= \sqrt
           {
             \frac{1}{n!}
             \sum_{\sig \in \setset{S}_n}
               2^{-c(\sig)}
           },
  \end{align}
  where $c(\sig)$ is the number of cycles of length larger than one in the
  cycle notation expression of the permutation $\sig$, or, equivalently, where
  $c(\sig)$ is the number of orbits of length larger than one of the
  permutation $\sig$.
\end{corollary}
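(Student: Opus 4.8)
The plan is to obtain the identity directly by specializing Proposition~\ref{prop:average:double:cover:2} to the all-one matrix and then collapsing the resulting double sum over $\Sn \times \Sn$ into a single sum over $\Sn$. First I would observe that when $\matr{A} = \matr{1}_{n \times n}$ every entry satisfies $a_{i,j} = 1$, so that $\prod_{i \in [n]} a_{i,\sig(i)} = 1$ for every $\sig \in \Sn$; consequently $\perm(\matr{A}) = n!$ and the induced probability mass function is uniform, $p(\sig) = 1/n!$ for all $\sig \in \Sn$. Substituting these two facts into the statement of Proposition~\ref{prop:average:double:cover:2} replaces each weight $p(\sig_1) \cdot p(\sig_2)$ by the constant $1/(n!)^2$, so that the quantity under the root sign becomes $\frac{1}{(n!)^2} \sum_{\sig_1, \sig_2 \in \Sn} 2^{-c(\sig_1, \sig_2)}$. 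The task thus reduces to evaluating this unweighted double sum.

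The key step is a change of summation variable that exploits the fact that $c(\sig_1, \sig_2)$ depends on the pair $(\sig_1, \sig_2)$ only through the single permutation $\sig_1 \circ \sig_2^{-1}$. Fixing $\sig_2$ and setting $\tau \defeq \sig_1 \circ \sig_2^{-1}$, the map $\sig_1 \mapsto \tau$ is a bijection of $\Sn$ onto itself, since right-composition by the fixed permutation $\sig_2^{-1}$ is a right-translation of the group. Hence the inner sum $\sum_{\sig_1 \in \Sn} 2^{-c(\sig_1 \circ \sig_2^{-1})}$ equals $\sum_{\tau \in \Sn} 2^{-c(\tau)}$, a value that does not depend on $\sig_2$. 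Summing over the $n!$ choices of $\sig_2$ then gives $\sum_{\sig_1, \sig_2 \in \Sn} 2^{-c(\sig_1, \sig_2)} = n! \cdot \sum_{\tau \in \Sn} 2^{-c(\tau)}$. Combining this with the prefactor $1/(n!)^2$ yields $\frac{1}{n!} \sum_{\tau \in \Sn} 2^{-c(\tau)}$ inside the root, which is exactly the claimed expression after renaming $\tau$ as $\sig$.

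I expect the only point requiring care — rather than a genuine obstacle — to be checking that the single-argument quantity $c(\sig)$ in the corollary is consistent with the two-argument quantity $c(\sig_1, \sig_2)$ of the proposition under this substitution. This is immediate from the definitions: $c(\sig_1, \sig_2)$ is the number of orbits of length larger than one of $\sig_1 \circ \sig_2^{-1}$, which is precisely $c(\tau)$ for $\tau = \sig_1 \circ \sig_2^{-1}$, so the summands agree termwise and no further argument is needed. The entire proof is therefore a short specialization followed by one bijective reindexing; since the corollary is an exact identity, no asymptotic estimate enters at this stage.
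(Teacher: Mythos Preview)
Your proof is correct and matches the paper's approach exactly: the paper's proof consists of the single sentence ``This follows from $p(\sig) = 1/n!$ for all $\sig \in \setset{S}_n$ and simplifying the summation,'' and your argument is precisely the detailed unpacking of that simplification via the right-translation bijection $\sig_1 \mapsto \sig_1 \circ \sig_2^{-1}$.
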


\begin{proof}
  This follows from $p(\sig) = 1/n!$ for all $\sig \in \setset{S}_n$ and
  simplifying the summation.
\end{proof}

Using results for the cycle index of the symmetric group (see, e.g.,
\cite{Biggs:89:1,Wilf2005}), one can analyze the expression
\begin{align*}
  Z_n
    &\defeq 
       \frac{1}{n!}
       \sum_{\sig \in \setset{S}_n}
         2^{-c(\sig)}
     = \frac{1}{|\setset{S}_n|}
       \sum_{\sig \in \setset{S}_n}
         2^{-c(\sig)}
\end{align*}
for $n \in \dN$. Indeed, one obtains the following results.
\begin{itemize}

\item Let $Z_0 \defeq 1$. Then for $n \in \dN$ it holds that
  \begin{align*}
    Z_n
      &= \frac{1}{n}
           \cdot
           \left(
             Z_{n-1}
             +
             \frac{1}{2}
               \cdot
               \sum_{\ell=2}^{n}
                 Z_{n-\ell}
           \right).
  \end{align*}

\item Analyzing the expression $n \cdot Z_n - (n \! - \! 1) \cdot Z_{n-1}$
  leads to the following simplified expression for $n \geq 2$:
  \begin{align*}
    Z_n
      &= Z_{n-1}
         -
         \frac{Z_{n-2}}{2n}.
  \end{align*}
  This expression is convenient for numerically evaluating $Z_n$, however, it
  does not appear that there are closed-form expressions for $Z_n$.

\item Nevertheless, using mathematical induction, one can obtain the following
  lower and upper bounds which hold for all positive integers $n$:
  \begin{align*}
    \frac{1}{\sqrt{2}}
    \cdot
    \frac{1}{\sqrt{n}}
      &\leq
         Z_n
       \leq
         \frac{3}{2 \sqrt{2}}
         \cdot
         \frac{1}{\sqrt{n}}.
  \end{align*}

\item In order to obtain an even more precise characterization of
  $Z_n$, one can use
  \begin{align*}
    Z_n
      &= \frac{1}{n!}
           \cdot
           B_n
             \left(
               0! \cdot 1, 
               1! \cdot \frac{1}{2}, 
               \dots, 
               (n \! - \! 1)! \cdot \frac{1}{2}
             \right),
  \end{align*}
  where $B_n$ denotes the $n$-th complete exponential Bell polynomial (see,
  e.g., \cite{Comtet1974}). Then, carefully bounding the above expression, one
  obtains
  \begin{align*}
    Z_n
      &\sim
         \sqrt{\frac{\e}
                    {\pi n}
              }.
  \end{align*}
  This expression, via Corollary~\ref{cor:prop:average:double:cover:2:1},
  immediately yields~\eqref{eq:double:cover:Bethe:perm:all:one:matrix:1:2}.

\item Finally, Eq.~\ref{eq:double:cover:Bethe:perm:all:one:matrix:1:3} is then
  obtained by combining~\eqref{eq:double:cover:Bethe:perm:all:one:matrix:1:1}
  and~\eqref{eq:double:cover:Bethe:perm:all:one:matrix:1:2}.

\end{itemize}

\section{Double-Cover-Based Analysis: I.I.D.\ Matrix}
\label{sec:iid:matrices:1}

In this section we consider matrices satisfying the following assumption.

\begin{assumption}
  \label{ass:iid:matrices:1}

  Let $\matr{A}$ be a random matrix of size $n \times n$ whose entries are
  i.i.d.\ according to some distribution with support over the non-negative
  reals, first moment $\mu_1$, second moment~$\mu_2$, and, consequently,
  variance $\mu_2 - \mu_1^2$.
\end{assumption}

\begin{theorem}
  \label{theorem:iid:matrices:1}

  Given Assumption~\ref{ass:iid:matrices:1}, it holds that
  \begin{align}
    \frac{\sqrt{\expval{\perm(\matr{A})^2}}}
         {\sqrt{\expval{\perm_{\Bethe,2}(\matr{A})^2}}}
       \sim
         \sqrt[4]{\frac{\pi n}{\e}}.
           \label{eq:double:cover:Bethe:perm:iid:matrices:1:1}
  \end{align}
\end{theorem}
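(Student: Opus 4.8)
The plan is to compute the numerator and denominator of~\eqref{eq:double:cover:Bethe:perm:iid:matrices:1:1} separately, reducing both to combinatorial sums over pairs of permutations, and then to show that in the ratio the distribution-dependent moments $\mu_1, \mu_2$ cancel, leaving an expression governed by the same quantity $Z_n$ that appeared in the all-one case. First I would expand $\perm(\matr{A})^2 = \sum_{\sig_1, \sig_2 \in \Sn} \prod_{i \in [n]} a_{i,\sig_1(i)} a_{i,\sig_2(i)}$ and take expectations. By i.i.d.-ness, $\expval{\prod_i a_{i,\sig_1(i)} a_{i,\sig_2(i)}}$ depends only on how many indices $i$ satisfy $\sig_1(i) = \sig_2(i)$: if $\sig_1(i) = \sig_2(i)$ that factor contributes $\mu_2$, otherwise two distinct entries contribute $\mu_1^2$. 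Writing $\mathrm{fix}(\sig_1 \circ \sig_2^{-1})$ for the number of fixed points of $\sig_1 \circ \sig_2^{-1}$ (equivalently the number of $i$ with $\sig_1(i) = \sig_2(i)$), one gets
\begin{align*}
  \expval{\perm(\matr{A})^2}
    &= \sum_{\sig_1, \sig_2 \in \Sn}
         \mu_2^{\,\mathrm{fix}(\sig_1 \circ \sig_2^{-1})} \,
         \mu_1^{\,2(n - \mathrm{fix}(\sig_1 \circ \sig_2^{-1}))}
     = n! \sum_{\sig \in \Sn}
         \mu_2^{\,\mathrm{fix}(\sig)} \,
         \mu_1^{\,2(n - \mathrm{fix}(\sig))},
\end{align*}
using invariance of the sum under $\sig_2$ and substituting $\sig = \sig_1 \circ \sig_2^{-1}$.

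Next I would do the analogous computation for the denominator. By Proposition~\ref{prop:average:double:cover:1} we have $\perm_{\Bethe,2}(\matr{A})^2 = Z(\hat\graphN(\matr{A}))$, and by (the proof of) Proposition~\ref{prop:average:double:cover:2} the partition function $Z(\hat\graphN(\matr{A}))$ decomposes as a sum over pairs $(\sig_1,\sig_2) \in \Sn \times \Sn$ of terms whose value is $\big(\prod_i a_{i,\sig_1(i)}\big)\big(\prod_i a_{i,\sig_2(i)}\big) \cdot 2^{-c(\sig_1,\sig_2)}$ — this is exactly the content of~\eqref{eq:key:result:1} rewritten before taking the square root. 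Taking expectations and using the same i.i.d.\ bookkeeping gives
\begin{align*}
  \expval{\perm_{\Bethe,2}(\matr{A})^2}
    &= \sum_{\sig_1, \sig_2 \in \Sn}
         \mu_2^{\,\mathrm{fix}(\sig_1 \circ \sig_2^{-1})} \,
         \mu_1^{\,2(n - \mathrm{fix}(\sig_1 \circ \sig_2^{-1}))} \,
         2^{-c(\sig_1, \sig_2)}
     = n! \sum_{\sig \in \Sn}
         \mu_2^{\,\mathrm{fix}(\sig)} \,
         \mu_1^{\,2(n - \mathrm{fix}(\sig))} \,
         2^{-c(\sig)},
\end{align*}
where $c(\sig)$ counts orbits of length $> 1$, so that $\mathrm{fix}(\sig) + \sum_{\text{non-trivial orbits}} (\text{orbit length}) = n$ and $c(\sig)$ is the number of those non-trivial orbits. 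Thus the ratio in~\eqref{eq:double:cover:Bethe:perm:iid:matrices:1:1}, raised to the fourth power, equals $\Big(\sum_{\sig} w(\sig)\Big) \big/ \Big(\sum_{\sig} w(\sig) 2^{-c(\sig)}\Big)$ where $w(\sig) \defeq \mu_2^{\mathrm{fix}(\sig)} \mu_1^{2(n-\mathrm{fix}(\sig))}$.

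The final step is the asymptotic analysis of this last ratio. I would group permutations by their cycle type and use the cycle-index machinery for $\symgrp{n}$ (as in the sketch of Theorem~\ref{theorem:all:one:matrix:1}): the exponential generating function $\sum_n \frac{1}{n!}\big(\sum_{\sig \in \Sn} \prod_k s_k^{a_k(\sig)}\big) x^n = \exp\!\big(\sum_{k \geq 1} s_k x^k / k\big)$, specialized here with the weight $\mu_2 x$ tracking $1$-cycles, $\mu_1^2 x^k / k$ tracking $k$-cycles for $k \geq 2$ in the numerator, and an extra factor $\tfrac12$ on every $k$-cycle with $k \geq 2$ for the denominator. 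This yields closed-form generating functions of the shape $\exp(\mu_2 x) \cdot (1 - \mu_1^2 x)^{-\beta}$ with $\beta = 1$ (numerator) and $\beta = 1/2$ (denominator), whose coefficient asymptotics are controlled by the singularity at $x = 1/\mu_1^2$ via singularity analysis / Darboux's method, giving $\sum_\sig w(\sig) \asymp n! \mu_1^{2n} n^{\beta - 1} e^{\mu_2/\mu_1^2} / \Gamma(\beta)$. Taking the ratio, the common factors $n! \mu_1^{2n} e^{\mu_2/\mu_1^2}$ cancel — this is precisely why the distribution drops out — and one is left with a ratio asymptotic to $\Gamma(1/2)/\Gamma(1) \cdot n^{1 - 1/2} = \sqrt{\pi n}$ up to the $\mathrm{e}$-factor, whose fourth root is $\sqrt[4]{\pi n / \e}$; more precisely the denominator sum is exactly $n! \cdot Z'_n$ for a shifted variant $Z'_n$ of the $Z_n$ of Section~\ref{sec:all:one:matrix:1}, so one can recycle the estimate $Z_n \sim \sqrt{\e/(\pi n)}$ essentially verbatim. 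The main obstacle is making the singularity analysis rigorous when $\mu_1 = 0$ (so the support is concentrated away from $0$ only in a degenerate sense) or more importantly verifying that the subexponential corrections to both sums match closely enough that their ratio has the stated clean limit; I would handle the $\mu_1 = 0$ case separately (there $\perm(\matr{A})$ and $\perm_{\Bethe,2}(\matr{A})^2$ are supported only on $\sig$ with no fixed points, i.e.\ derangements, which changes the generating function to $\exp(-\mu_1^2 x)(1-\mu_1^2 x)^{-\beta}$ but leaves the singularity and hence the ratio asymptotics unchanged) and, for the general case, rely on transfer theorems for algebraic-logarithmic singularities to extract matching $n^{\beta-1}$ growth with identical constants $e^{\mu_2/\mu_1^2}\mu_1^{2n}$ in numerator and denominator.
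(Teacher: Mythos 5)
Your proposal follows essentially the same route as the paper's proof in Appendix~\ref{sec:proof:theorem:iid:matrices:1}: both expectations are reduced, via the change of variables $\sig = \sig_1 \circ \sig_2^{-1}$ and the observation that $\expval{a_{i,\sig_1(i)} a_{i,\sig_2(i)}}$ is $\mu_2$ or $\mu_1^2$ according to whether $i$ is a fixed point, to cycle-index sums over a single permutation with weights $z_1 = \mu_2$ and $z_k = (\mu_1^2)^k$ (resp.\ $z_k = \tfrac{1}{2}(\mu_1^2)^k$) for $k \geq 2$; the corresponding generating functions $\e^{(\mu_2 - \beta\mu_1^2)t}(1 - \mu_1^2 t)^{-\beta}$ with $\beta = 1$ and $\beta = \tfrac{1}{2}$ are then compared, and the common factors $\mu_1^{2n}$ and $\e^{\mu_2/\mu_1^2}$ cancel in the ratio, which is exactly why the distribution drops out. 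The only substantive difference is the final step: the paper extracts the coefficient asymptotics elementarily, writing the $\beta = \tfrac{1}{2}$ coefficient as a convolution of $\tau^\l/\l!$ with normalized central binomial coefficients and bounding it by splitting the sum at a threshold $h(n)$ with $h(n) = \omega_n(1)$ and $h(n) = o(n)$ (reusing Lemmas~\ref{lemma:Zn-asym-lb} and~\ref{lemma:Zn-asym-ub}), whereas you invoke singularity analysis / transfer theorems; both are valid, yours being less self-contained but shorter. Two small corrections: (i) the ratio in~\eqref{eq:double:cover:Bethe:perm:iid:matrices:1:1} \emph{squared} (not raised to the fourth power) equals $\bigl( \sum_{\sig} w(\sig) \bigr) / \bigl( \sum_{\sig} w(\sig) 2^{-c(\sig)} \bigr) \sim \sqrt{\pi n / \e}$, so the last step is a square root rather than a fourth root --- your two exponent slips cancel and the stated conclusion is still correct; (ii) the worry about $\mu_1 = 0$ is vacuous, since a distribution supported on the non-negative reals with $\mu_1 = 0$ is the point mass at zero, and the derangement-based fix you sketch for that case does not apply; the argument implicitly requires $\mu_1 > 0$, as does the paper's.
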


Interestingly, although $\expval{\perm_{\Bethe,2}(\matr{A})^2}$ and
$\expval{\perm(\matr{A})^2}$ both \emph{depend} on the chosen distribution
(see the calculations below), the right-hand side
of~\eqref{eq:double:cover:Bethe:perm:iid:matrices:1:1} is \emph{independent}
of this distribution!

\begin{proof}
  A sketch of the proof of this theorem is given below. For all the details of
  the proof, see Appendix~\ref{sec:proof:theorem:iid:matrices:1}.
\end{proof}

Let us sketch the proof of Theorem~\ref{theorem:iid:matrices:1}. We start by
evaluating $\expval{\perm_{\Bethe,2}(\matr{A})^2}$. From
Proposition~\ref{prop:average:double:cover:2} we obtain the following
corollary.

\begin{corollary}[of Proposition~\ref{prop:average:double:cover:2}]
  \label{cor:prop:average:double:cover:2:2}

  Given Assumption~\ref{ass:iid:matrices:1}, it holds that
  \begin{align*}
    \expval{\perm_{\Bethe,2}(\matr{A})^2}
      &= n!
         \cdot
         \sum_{\sig \in \setset{S}}
           \mu_2^{c_1(\sig)}
           \cdot
           \left(
             \prod_{\ell \geq 2} \mu_1^{2 \cdot \ell \cdot c_{\ell}(\sig)}
           \right)
           \cdot
           2^{-c(\sig)}
  \end{align*}
  where for $\ell \geq 1$ we define $c_{\ell}(\sig)$ to be the number of
  cycles of length $\ell$ in the cycle notation expression of the
  permutation~$\sig$. Note that $\sum_{\ell \geq 1} \ell \cdot c_{\ell} = n$
  and $\sum_{\ell \geq 2} c_{\ell}(\sig) = c(\sig)$.
\end{corollary}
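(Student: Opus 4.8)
The plan is to start from Proposition \ref{prop:average:double:cover:2}, which gives
\[
  \perm_{\Bethe,2}(\matr{A})^2
    = \perm(\matr{A})^2
      \cdot
      \sum_{\sig_1, \sig_2 \in \Sn}
        p(\sig_1)\, p(\sig_2)\, 2^{-c(\sig_1,\sig_2)} ,
\]
and to unfold the definition of $p(\cdot)$ so that the right-hand side becomes a bare sum over $\Sn \times \Sn$ of monomials in the entries $a_{i,j}$: indeed $\perm(\matr{A})^2 p(\sig_1) p(\sig_2) = \bigl(\prod_i a_{i,\sig_1(i)}\bigr)\bigl(\prod_i a_{i,\sig_2(i)}\bigr)$. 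First I would take the expectation of this expression term by term (linearity of expectation), reducing the problem to computing $\expval{\prod_i a_{i,\sig_1(i)} \prod_i a_{i,\sig_2(i)}}$ for a fixed pair $(\sig_1,\sig_2)$. Because the entries are i.i.d., this expectation factorizes over the distinct cells $(i,j)$ of the matrix that appear: a cell appearing once contributes $\mu_1$, a cell appearing twice contributes $\mu_2$, and cells not appearing contribute $1$. A cell $(i,j)$ is "hit twice" precisely when $\sig_1(i) = \sig_2(i) = j$, i.e.\ exactly at the fixed points of the permutation $\tau \defeq \sig_1 \circ \sig_2^{-1}$ (equivalently $\sig_1^{-1}\sig_2$, after relabeling); all other hit cells are hit once.

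The combinatorial heart of the argument is then to re-parametrize the double sum by $\tau = \sig_1 \circ \sig_2^{-1}$. For each fixed $\tau \in \Sn$ there are exactly $n!$ ordered pairs $(\sig_1,\sig_2)$ with $\sig_1 \circ \sig_2^{-1} = \tau$ (choose $\sig_2$ freely, then $\sig_1$ is determined), which produces the prefactor $n!$. The number of cells hit twice is the number of fixed points of $\tau$, namely $c_1(\tau)$, contributing $\mu_2^{c_1(\tau)}$. The number of cells hit exactly once is $2n - 2c_1(\tau)$; but it is more convenient to organize these by the non-trivial cycles of $\tau$: a cycle of length $\ell \geq 2$ in $\tau$ accounts for $2\ell$ singly-hit cells (the $\ell$ cells from $\sig_1$ and the $\ell$ cells from $\sig_2$ on that orbit are all distinct when $\ell \geq 2$), contributing $\mu_1^{2\ell}$, and over all such cycles this yields $\prod_{\ell \geq 2} \mu_1^{2\ell\, c_\ell(\tau)}$. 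Finally $c(\sig_1,\sig_2) = c(\tau)$ by definition, giving the factor $2^{-c(\tau)}$. Collecting everything and renaming $\tau$ back to $\sig$ yields the claimed formula, and the two side remarks ($\sum_\ell \ell c_\ell = n$ and $\sum_{\ell \geq 2} c_\ell = c$) are immediate from the definitions.

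The main obstacle — really the only subtle point — is the bookkeeping of which cells are hit once versus twice, and the verification that for a non-trivial cycle of $\tau$ the $2\ell$ cells contributed by $\sig_1$ and $\sig_2$ restricted to that orbit are genuinely distinct (so that none of them is secretly hit twice). This follows because on an orbit $O$ of $\tau$ of size $\ell \geq 2$, the cells $\{(i,\sig_1(i)) : i \in O\}$ and $\{(i,\sig_2(i)) : i \in O\}$ coincide only if $\sig_1(i) = \sig_2(i)$ for some $i \in O$, which would force $\tau(i) = i$, contradicting $\ell \geq 2$; and the two families are disjoint from the cells arising from other orbits since their first coordinates lie in $O$. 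Once this is checked, the factorization of the expectation and the change of variables are routine, so I would present those steps compactly and devote the care to this orbit-wise counting.
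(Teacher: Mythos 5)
Your proposal is correct and follows essentially the same route as the paper's proof: expand $\perm_{\Bethe,2}(\matr{A})^2$ via Proposition~\ref{prop:average:double:cover:2}, push the expectation inside the sum, factor it over the matrix cells using independence (yielding $\mu_2^{c_1}$ from doubly-hit cells and $\prod_{\ell\geq 2}\mu_1^{2\ell c_\ell}$ from singly-hit ones), and then collapse the double sum over $(\sig_1,\sig_2)$ to a single sum over $\sig = \sig_1\circ\sig_2^{-1}$ with the prefactor $n!$. Your explicit verification that the $2\ell$ cells on a non-trivial orbit are genuinely distinct is a detail the paper's proof leaves implicit, but it does not change the argument.
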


\begin{proof}
  From Proposition~\ref{prop:average:double:cover:2} it follows that
  \begin{align*}
    &
    \hspace{-0.05cm}
    \expval{\perm_{\Bethe,2}(\matr{A})^2} \\
      &= \!\!\!\!\!
         \sum_{\sig_1, \sig_2 \in \Sn}
         \!\!\!\!\!
           \expval{
             \left(
               \prod_{i_1 \in [n]} \!\!
                 a_{i_1,\sig_1(i_1)}
             \right)
             \! \cdot \!
             \left(
               \prod_{i_2 \in [n]} \!\!
                 a_{i_2,\sig_2(i_2)}
             \right)
           }
           \!\! \cdot \!
           2^{-c(\sig_1,\sig_2)} \\
      &= \sum_{\sig_1, \sig_2 \in \Sn}
           \mu_2^{c_1(\sig_1,\sig_2)}
           \cdot
           \left(
             \prod_{\ell \geq 2} \mu_1^{2 \cdot \ell \cdot c_{\ell}(\sig_1,\sig_2)}
           \right)
           \cdot
           2^{-c(\sig_1,\sig_2)} \\
      &= n!
         \cdot
         \sum_{\sig \in \setset{S}}
           \mu_2^{c_1(\sig)}
           \cdot
           \left(
             \prod_{\ell \geq 2} \mu_1^{2 \cdot \ell \cdot c_{\ell}(\sig)}
           \right)
           \cdot
           2^{-c(\sig)},
  \end{align*}
  where for $\ell \geq 1$ we define $c_{\ell}(\sig_1,\sig_2)$ to be the number
  of cycles of length $\ell$ in the cycle notation expression of the
  permutation $\sig_1 \circ \sig_2^{-1}$.
\end{proof}

Using results for the cycle index of the symmetric group, one can analyze the
expression
\begin{align*}
  Z_n
   &\defeq 
      \frac{1}{n!}
      \sum_{\sig \in \setset{S}_n}
        \mu_2^{c_1(\sig)}
           \cdot
           \left(
             \prod_{\ell \geq 2} \mu_1^{2 \cdot \ell \cdot c_{\ell}(\sig)}
           \right)
           \cdot
           2^{-c(\sig)} \\
   &= \frac{1}{|\setset{S}_n|}
      \sum_{\sig \in \setset{S}_n}
        \mu_2^{c_1(\sig)}
           \cdot
           \left(
             \prod_{\ell \geq 2} \mu_1^{2 \cdot \ell \cdot c_{\ell}(\sig)}
           \right)
           \cdot
           2^{-c(\sig)},
\end{align*}
for positive integers $n$. In order to evaluate $Z_n$, one can use
\begin{align*}
  Z_n
    &= \frac{1}{n!}
         \cdot
         B_n
           \left(
             0! \cdot \mu_2, 
             1! \cdot \frac{1}{2} (\mu_1^2)^2, 
             \dots, 
             (n \! - \! 1)! \cdot \frac{1}{2} (\mu_1^2)^n
           \right)\!,
\end{align*}
where $B_n$ denotes the $n$-th complete exponential Bell polynomial. Then,
carefully bounding the above expression, one obtains
\begin{align*}
  Z_n
    &\sim
       \mu_1^{2n}
       \cdot
       \e^{(\mu_2/\mu_1^2)-1}
       \cdot
       \sqrt
       {\frac{\e}
             {\pi n}
       },
\end{align*}
The combination of the above results yields
\begin{align*}
  \expval{\perm_{\Bethe,2}(\matr{A})^2}
    &\sim
       (n!)^2
       \cdot
       \mu_1^{2n}
       \cdot
       \e^{(\mu_2/\mu_1^2)-1}
       \cdot
       \sqrt
       {\frac{\e}
             {\pi n}
       }.
\end{align*}

Turning our attention now to $\expval{\perm(\matr{A})^2}$, calculations
similar to the above calculations yield
\begin{align*}
  \expval{\perm(\matr{A})^2}
    &= n!
       \cdot
       \sum_{\sig \in \setset{S}}
         \mu_2^{c_1(\sig)}
         \cdot
         \left(
           \prod_{\ell \geq 2} (\mu_1^2)^{\ell c_{\ell}(\sig)}
         \right) \\
    &= \Psi_n(\mu_2, \mu_1^2, 1) \\
    &\sim
       (n!)^2 
         \cdot
         \mu_1^{2n} 
         \cdot
         \e^{(\mu_2/\mu_1^2)-1}.
\end{align*}

Finally, combining the above results, we
get~\eqref{eq:double:cover:Bethe:perm:iid:matrices:1:1}.

\section{Conclusion}
\label{sec:conclusion:1}

We conclude this paper with a few remarks:
\begin{itemize}

\item In this paper we have studied the ratios
  $\frac{\perm(\matr{A})} {\perm_{\Bethe}(\matr{A})}$ and
  $\frac{\perm(\matr{A})} {\perm_{\Bethe,2}(\matr{A})}$. While it is more
  desirable to characterize the former, the latter seems to be more
  tractable. In particular, the latter can be used to obtain qualitative
  \emph{and} quantitative insights into the former. We leave it to future
  research to strengthen the results that are presented in this paper w.r.t.\
  these ratios.

\item For the all-one matrices studied in Sections~\ref{sec:all:one:matrix:1},
  some further investigations show that the summation
  in~\eqref{eq:key:result:1} is dominated by permutations
  $\sig_1, \sig_2 \in \setset{S}_n$ for which
  $c(\sig_1,\sig_2) = \Theta\bigl( \ln(n) \bigr)$, ultimately leading to the
  result
  $\frac{\perm(\matr{A})}{\perm_{\Bethe,2}(\matr{A})} = \Theta(\sqrt[4]{n})$.
  
  We expect a similar behavior for the matrices of the setup in
  Section~\ref{sec:iid:matrices:1}. More precisely, we conjecture that with
  high probability the matrix~$\matr{A}$ is such that the summation
  in~\eqref{eq:key:result:1} is dominated by permutations
  $\sig_1, \sig_2 \in \setset{S}_n$ for which
  $c(\sig_1,\sig_2) = \Theta\bigl( \ln(n) \bigr)$.

\item The proofs in Appendices~\ref{sec:proof:theorem:all:one:matrix:1}
  and~\ref{sec:proof:theorem:iid:matrices:1} can be generalized and
  unified. For details, see Appendix~\ref{sec:unification:1}.

\end{itemize}

\appendices

\section{Proof of Proposition~\ref{prop:average:double:cover:1}}
\label{sec:proof:prop:average:double:cover:1}

Proposition~\ref{prop:average:double:cover:1} is obtained by
applying~\cite[Theorem~4]{7746637} to $\graphN(\matr{A})$. (Because the
following derivations rely heavily on the technique presented
in~\cite{7746637}, the reader is advised to first read that paper.)

Recall the definition of $\setset{X} \defeq \{ 0, 1 \}$ in
Section~\ref{sec:nfg:1} and the definition of
$\setset{\hat X} \defeq \setset{X} \times \setset{X} = \bigl\{ (0,0), \ (0,1),
\ (1,0), \ (1,1) \bigr\}$ in
Section~\ref{sec:double:cover:based:analysis:1}. In this appendix, we will
also use the set
\begin{align*}
  \setset{\tilde X} 
     \defeq
       \setset{X} \times \setset{X}
    &= \bigl\{ (0,0), \ (0,1), \ (1,0), \ (1,1) \bigr\}.
\end{align*}
For all these sets, their entries are considered to be ordered as shown
above. Moreover, if $\matr{M}$ is some matrix, then $\matr{M}^\tr$ is its
transpose. If $\matr{M}_1$ and $\matr{M}_2$ are two matrices of arbitrary
size, then $\matr{M}_1 \otimes \matr{M}_2$ is the Kronecker product of
$\matr{M}_1$ and $\matr{M}_2$.

We start by recalling some notations from~\cite{7746637}.
\begin{itemize}

\item Let $\setset{A} \defeq \{ a_1, a_2, \ldots, a_{|\setset{A}|} \}$ and
  $\setset{B} \defeq \{ b_1, b_2, \ldots, b_{|\setset{B}|} \}$ be some finite
  sets whose entries are considered to be ordered as shown here. We associate
  the following matrix of size $|\setset{A}| \times |\setset{B}|$ with a
  function $f: \setset{A} \times \setset{B} \to \dR$:
  \begin{align*}
    \matr{T}_f
      &\defeq
         \begin{pmatrix}
           f(a_1, b_1) & f(a_1, b_2), & \cdots & f(a_1, b_{|\setset{B}|}) \\
           f(a_2, b_1) & f(a_2, b_2), & \cdots & f(a_2, b_{|\setset{B}|}) \\
           \vdots      & \vdots      & \ddots & \vdots \\
           f(a_{|\setset{A}|}, b_1) 
             & f(a_{|\setset{A}|}, b_2) 
             & \cdots 
             & f(a_{|\setset{A}|}, b_{|\setset{B}|})
         \end{pmatrix}.
  \end{align*}
  Similarly, if $\setset{A}$, $\setset{B}$, and $\setset{C}$ are some finite
  sets, then we associate an array $\matr{T}_f$ of size
  $|\setset{A}| \times |\setset{B}| \times |\setset{C}|$ with a function
  $f: \setset{A} \times \setset{B} \times \setset{C} \to \dR$, etc.

\item The function\footnote{Note that, although $\Phi$ (which is introduced
    here) and $\tilde \Phi_M$ (which appears in
    Sections~\ref{sec:introduction:1} and~\ref{sec:nfg:1}) are both used to
    describe graph covers, they are not directly related.}
  \begin{align*}
    \Phi: \ 
      &\setset{\tilde X} \times \setset{\hat X} \to \dR
  \end{align*}
  is defined as in~\cite[Section~III]{7746637}. Namely, $\Phi$ is such that
  \begin{align*}
    \matr{T}_{\Phi}
      &\defeq
         \left(
           \begin{array}{c|cc|c}
             1 & 0          & 0           & 0 \\
           \hline
           & & & \\[-0.35cm]
             0 & 1/\sqrt{2} & 1/\sqrt{2}  & 0 \\
             0 & 1/\sqrt{2} & -1/\sqrt{2} & 0 \\
           \hline
             0 & 0          & 0           & 1
           \end{array}
         \right).
  \end{align*}
  Note that $\matr{T}_{\Phi}^\tr = \matr{T}_{\Phi}$ and
  $\matr{T}_{\Phi}^{-1} = \matr{T}_{\Phi}$.

\item As in~\cite[Section~III]{7746637}, we define the matrices
  \begin{align*}
    \matr{\tilde{E}}_{\mathrm{nocross}}
      &\defeq
         \left(
           \begin{array}{c|cc|c}
             1 & 0 & 0 & 0 \\
           \hline
             0 & 1 & 0 & 0 \\
             0 & 0 & 1 & 0 \\
           \hline
             0 & 0 & 0 & 1
           \end{array}
         \right), \\
    \matr{\tilde{E}}_{\mathrm{cross}}
      &\defeq
         \left(
           \begin{array}{c|cc|c}
             1 & 0 & 0 & 0 \\
           \hline
             0 & 0 & 1 & 0 \\
             0 & 1 & 0 & 0 \\
           \hline
             0 & 0 & 0 & 1
           \end{array}
         \right),
  \end{align*}
  whose rows and columns are labeled by the elements of $\setset{\tilde
    X}$. Based on these matrices, we define the matrices
  \begin{alignat*}{2}
    \matr{\hat{E}}_{\mathrm{nocross}}
      &\defeq
         \matr{T}_{\Phi}^\tr
           \cdot 
           \matr{\tilde{E}}_{\mathrm{nocross}}
           \cdot 
           \matr{T}_{\Phi}
     &&= \left(
           \begin{array}{c|cc|c}
             1 & 0 & 0 & 0 \\
           \hline
             0 & 1 & 0 & 0 \\
             0 & 0 & 1 & 0 \\
           \hline
             0 & 0 & 0 & 1
           \end{array}
         \right), \\
    \matr{\hat{E}}_{\mathrm{cross}}
      &\defeq
    \matr{T}_{\Phi}^\tr
      \cdot 
      \matr{\tilde{E}}_{\mathrm{cross}}
      \cdot 
      \matr{T}_{\Phi}
    &&= \left(
           \begin{array}{c|cc|c}
             1 & 0 &  0 & 0 \\
           \hline
             0 & 1 &  0 & 0 \\
             0 & 0 & -1 & 0 \\
           \hline
             0 & 0 &  0 & 1
           \end{array}
         \right),
  \end{alignat*}
  and
  \begin{align*}
    \matr{T}_{\hat{E}_e}
      &\defeq
         \frac{1}{2}
           \cdot
           \matr{\hat{E}}_{\mathrm{nocross}}
         +
         \frac{1}{2}
           \cdot
           \matr{\hat{E}}_{\mathrm{cross}}
       = \left(
             \begin{array}{c|cc|c}
               1 & 0 &  0 & 0 \\
             \hline
               0 & 1 &  0 & 0 \\
               0 & 0 &  0 & 0 \\
             \hline
               0 & 0 &  0 & 1
             \end{array}
           \right).
  \end{align*}
  whose rows and columns are labeled by the elements of~$\setset{\hat X}$.

\end{itemize}

In order to apply the technique from~\cite{7746637}, we need, in a first step,
to compute the functions $\tfL{i}$ and $\hfL{i}$ derived from $\fL{i}$,
$i \in [n]$, and the functions $\tfR{j}$ and $\hfR{j}$ derived from $\fR{j}$,
$j \in [n]$. Here we will only find $\tfL{i}$ and $\hfL{i}$, $i \in [n]$,
because $\tfR{j}$ and $\hfR{j}$, $j \in [n]$, are obtained in an analogous
manner.

Fix some positive integer $n$ and some $i \in [n]$. According
to~\cite{7746637}, $\tfL{i}$ and $\hfL{i}$ are defined as, respectively,
\begin{alignat*}{3}
  \tfL{i}: \ 
    & \setset{\tilde{X}}^{n}
   && \to
   && \ \dR \\
    & (\tilde{x}_{i,1}, \ldots, \tilde{x}_{i,n})
   && \mapsto
   && \ 
      f(x_{i,1}, \ldots, x_{i,n})
      \cdot
      f(x'_{i,1}, \ldots, x'_{i,n}), \\[0.25cm]
  \hfL{i}: \ 
    & \setset{\hat{X}}^{n}
   && \to
   && \ \dR \\
    & (\hat{x}_{i,1}, \ldots, \hat{x}_{i,n})
   && \mapsto
   && \sum_{\tilde{x}_1, \ldots, \tilde{x}_{n} \in \setset{\tilde X}}
        \tfL{i}(\tilde{x}_{i,1}, \ldots, \tilde{x}_{i,n}) \\
    &
   &&
   && \hspace{1.75cm}
        \cdot \ 
        \prod_{j=1}^{n}
          \Phi(\tilde{x}_{i,j}, \hat{x}_{i,j}),
\end{alignat*}
where we have used
$\tilde{x}_{i,j} \defeq (x_{i,j}, x'_{i,j}) \in \setset{\tilde X}$,
$j \in [n]$, in the definition of $\tfL{i}$.

In the following, we want to find the explicit expression for
$\hfL{i}$. However, before tackling the case of general $n$, we will first
consider the cases $n = 2$ and $n = 3$.

\subsection{Function $\hfL{i}$: case $n = 2$}
\label{sec:proof:proposition:1:case:n:2}

Let $n = 2$ and fix some $i \in [n]$. The matrix associated with the function
$\fL{i}$ turns out to be
\begin{align*}
  \matr{T}_{\fL{i}}
    &\defeq
       \begin{pmatrix}
         0            & \sqrt{a_{i,2}} \\
         \sqrt{a_{i,1}} & 0
       \end{pmatrix}.
\end{align*}
Reusing some of the calculations in~\cite[Section~IV]{7746637}, the matrices
associated with $\tfL{i}$ and $\hfL{i}$ are, respectively,
\begin{align*}
  \matr{T}_{\tfL{i}}
    &\! \defeq \!
       \matr{T}_{\fL{i}} 
       \otimes 
       \matr{T}_{\fL{i}}
     \! = \!
       {\scriptsize
       \left(
         \begin{array}{c|cc|c}
         0       & 0                     & 0                     & a_{i,2} 
         \\[0.08cm]
         \hline
         0       & 0                     & \sqrt{a_{i,2} a_{i,1}} & 0 
         \\
         0       & \sqrt{a_{i,1} a_{i,2}} & 0 & 0
         \\[0.08cm]
         \hline
         a_{i,1} & 0                      & 0 & 0
       \end{array}
       \right)
       } \! , \\
  \matr{T}_{\hfL{i}}
    &\! \defeq \!
       \matr{T}_{\Phi}
         \cdot 
         \matr{T}_{\tfL{i}} \!\!\!
         \cdot 
         \matr{T}_{\Phi}
     \! = \! 
     {\scriptsize
       \left(
         \begin{array}{c|c@{\ }c|c}
           0
             & 0
             & 0 
             & a_{i,1} \\[0.08cm]
         \hline
         & & & \\[-0.25cm]
           0
             & \sqrt{a_{i,1} a_{i,2}}
             & 0 
             & 0 \\
           0 
             & 0 
             & - \sqrt{a_{i,1} a_{i,2}}
             & 0 \\[0.08cm]
         \hline
         & & & \\[-0.25cm]
           a_{i,2}
             & 0
             & 0 
             & 0
         \end{array}
       \right)
       } \! ,
\end{align*}
whose rows and columns are labeled by the elements of $\setset{\hat X}$.

Because all the entries of $\matr{T}_{\hat{E}_e}$ in row $(1,0)$ and all
entries of $\matr{T}_{\hat{E}_e}$ in column $(1,0)$ are equal to zero, it
turns out that $Z\bigl( \hat \graphN(\matr{A}) \bigr)$ is unchanged if the
function $\hfL{i}$ is replaced by the function $\hfL{i}$ such that
\begin{align*}
  \matr{T}_{\hfL{i}}
    &= \left(
         \begin{array}{c|cc|c}
           0
             & 0
             & 0 
             & a_{i,1} \\[0.08cm]
         \hline
         & & & \\[-0.25cm]
           0
             & \sqrt{a_{i,1} a_{i,2}}
             & 0 
             & 0 \\
           0 
             & 0 
             & 0
             & 0 \\[0.08cm]
         \hline
         & & & \\[-0.25cm]
           a_{i,2}
             & 0
             & 0 
             & 0
         \end{array}
       \right).
\end{align*}
The reader can verify that this matches the function $\hfL{i}$ stated in
Proposition~\ref{prop:average:double:cover:1} for the case $n = 2$.

\subsection{Function $\hfL{i}$: case $n = 3$}
\label{sec:proof:proposition:1:case:n:3}

Consider now the case $n = 3$ and fix some $i \in [n]$. (Note that in the
following we do not show the arrays associated with $\fL{i}$ and $\tfL{i}$ and
directly show the array associated with $\hfL{i}$.)

Reusing some of the calculations in~\cite[Section~IV]{7746637}, the array
$\matr{T}_{\hfL{i}}$ of size $4 \times 4 \times 4$ associated with $\hfL{i}$
is\footnote{Note that there were some typos in the paragraph before Lemma~6
  in~\cite{7746637}. Namely, the second occurrence of
  $\det(\matr{T}_{f|a_1=0})$ should be $\det(\matr{T}_{f|a_2=0})$ and the
  third occurrence of $\det(\matr{T}_{f|a_1=0})$ should be
  $\det(\matr{T}_{f|a_2=1})$. Moreover, the last term in the expression for
  $\hat{f}(\hzero,\hzero,\hzero)$ should be $t_{001} t_{110}$ instead of
  $t_{000} t_{110}$.}

{\scriptsize

\begin{center}
  $\displaystyle
  \left(
    \begin{array}{C{1.70cm}|C{1.70cm}C{1.70cm}|C{1.70cm}}
      0
        & 0
        & 0
        & a_{i,2} \\[0.08cm]
    \hline
     & & & \\[-0.25cm]
      0
        & \sqrt{a_{i,1} a_{i,2}}
        & 0 
        & 0 \\
      0 
        & 0 
        & - \sqrt{a_{i,1} a_{i,2}}
        & 0 \\[0.08cm]
    \hline
    & & & \\[-0.25cm]
      a_{i,1}
        & 0
        & 0 
        & 0
    \end{array}
  \right)$, \\[0.35cm]
  $\displaystyle
  \left(
    \begin{array}{C{1.70cm}|C{1.70cm}C{1.70cm}|C{1.70cm}}
      0
        & \sqrt{a_{i,2} a_{i,3}}
        & 0 
        & 0 \\[0.08cm]
    \hline
    & & & \\[-0.25cm]
       \sqrt{a_{i,1} a_{i,3}}
        & 0
        & 0 
        & 0 \\
      0 
        & 0 
        & 0
        & 0 \\[0.08cm]
    \hline
    & & & \\[-0.25cm]
      0
        & 0
        & 0 
        & 0
    \end{array}
  \right)$, \\[0.35cm]
  $\displaystyle
  \left(
    \begin{array}{C{1.70cm}|C{1.70cm}C{1.70cm}|C{1.70cm}}
      0 
        & 0 
        & - \sqrt{a_{i,2} a_{i,3}}
        & 0 \\[0.08cm]
    \hline
    & & & \\[-0.25cm]
      0 
        & 0 
        & 0
        & 0 \\
      - \sqrt{a_{i,1} a_{i,3}}
        & 0
        & 0 
        & 0 \\[0.08cm]
    \hline
    & & & \\[-0.25cm]
      0 
        & 0 
        & 0
        & 0
    \end{array}
  \right)$, \\[0.35cm]
  $\displaystyle
  \left(
    \begin{array}{C{1.70cm}|C{1.70cm}C{1.70cm}|C{1.70cm}}
      a_{i,3}
        & 0
        & 0 
        & 0 \\[0.08cm]
    \hline
    & & & \\[-0.25cm]
      0
        & 0
        & 0 
        & 0 \\
      0 
        & 0 
        & 0
        & 0 \\[0.08cm]
    \hline
    & & & \\[-0.25cm]
      0
        & 0
        & 0 
        & 0
    \end{array}
  \right)$, \\[0.35cm]
\end{center}

}

\noindent
where all three dimensions are labeled by the elements of $\setset{\hat X}$.

Because all the entries of $\matr{T}_{\hat{E}_e}$ in row $(1,0)$ and all
entries of $\matr{T}_{\hat{E}_e}$ in column $(1,0)$ are equal to zero, it
turns out that $Z\bigl( \hat \graphN(\matr{A}) \bigr)$ is unchanged if the
function $\hfL{i}$ is replaced by the function $\hfL{i}$ such that the array
$\matr{T}_{\hfL{i}}$ of size $4 \times 4 \times 4$ associated with $\hfL{i}$
is

{\scriptsize

\begin{center}
  $\displaystyle
  \left(
    \begin{array}{C{1.70cm}|C{1.70cm}C{1.70cm}|C{1.70cm}}
      0
        & 0
        & 0
        & a_{i,2} \\[0.08cm]
    \hline
     & & & \\[-0.25cm]
      0
        & \sqrt{a_{i,1} a_{i,2}}
        & 0 
        & 0 \\
      0 
        & 0 
        & 0
        & 0 \\[0.08cm]
    \hline
    & & & \\[-0.25cm]
      a_{i,1}
        & 0
        & 0 
        & 0
    \end{array}
  \right)$, \\[0.35cm]
  $\displaystyle
  \left(
    \begin{array}{C{1.70cm}|C{1.70cm}C{1.70cm}|C{1.70cm}}
      0
        & \sqrt{a_{i,2} a_{i,3}}
        & 0 
        & 0 \\[0.08cm]
    \hline
    & & & \\[-0.25cm]
       \sqrt{a_{i,1} a_{i,3}}
        & 0
        & 0 
        & 0 \\
      0 
        & 0 
        & 0
        & 0 \\[0.08cm]
    \hline
    & & & \\[-0.25cm]
      0
        & 0
        & 0 
        & 0
    \end{array}
  \right)$, \\[0.35cm]
  $\displaystyle
  \left(
    \begin{array}{C{1.70cm}|C{1.70cm}C{1.70cm}|C{1.70cm}}
      0 
        & 0 
        & 0
        & 0 \\[0.08cm]
    \hline
    & & & \\[-0.25cm]
      0 
        & 0 
        & 0
        & 0 \\
      0
        & 0
        & 0 
        & 0 \\[0.08cm]
    \hline
    & & & \\[-0.25cm]
      0 
        & 0 
        & 0
        & 0
    \end{array}
  \right)$, \\[0.35cm]
  $\displaystyle
  \left(
    \begin{array}{C{1.70cm}|C{1.70cm}C{1.70cm}|C{1.70cm}}
      a_{i,3}
        & 0
        & 0 
        & 0 \\[0.08cm]
    \hline
    & & & \\[-0.25cm]
      0
        & 0
        & 0 
        & 0 \\
      0 
        & 0 
        & 0
        & 0 \\[0.08cm]
    \hline
    & & & \\[-0.25cm]
      0
        & 0
        & 0 
        & 0
    \end{array}
  \right)$, \\[0.35cm]
\end{center}

}

\noindent
The reader can verify that this matches the function $\hfL{i}$ stated in
Proposition~\ref{prop:average:double:cover:1} for the case $n = 3$.

\subsection{Function $\hfL{i}$: general $n$}
\label{sec:proof:proposition:1:case:n:general}

We now consider an arbitrary $n \in \dN$ and fix some $i \in [n]$.

The following characterization of the function $\tfL{i}$ follows immediately
from the properties of the function $\fL{i}$. (Recall the definition of
$\fL{i}$ in~Section~\ref{sec:nfg:1}.)
\begin{itemize}

\item If there exists a $j \in [n]$ s.t.\ $\tilde{x}_{i,j} = (1,1)$ and
  $\tilde{x}_{i,j'} = (0,0)$ for all $j' \in [n] \setminus \{ j \}$, then
  \begin{align}
    \tfL{i}(\tilde{x}_{i,1}, \ldots, \tilde{x}_{i,n})
      &= \sqrt{a_{i,j}} \cdot \sqrt{a_{i,j}} 
       = a_{i,j}.
           \label{eq:property:tilde:f:L:1}
  \end{align}

\item If there exist $j, j' \in [n]$, $j \neq j'$, s.t.\ 
  $\tilde{x}_{i,j} = (0,1)$, $\tilde{x}_{i,j'} = (1,0)$ and
  $\tilde{x}_{i,j''} = (0,0)$ for all $j'' \in [n] \setminus \{ j, j' \}$,
  then
  \begin{align}
    \tfL{i}(\tilde{x}_{i,1}, \ldots, \tilde{x}_{i,n})
      &= \sqrt{a_{i,j} \cdot a_{i,j'}}.
           \label{eq:property:tilde:f:L:2}
  \end{align}

\item Otherwise,
  \begin{align}
    \tfL{i}(\tilde{x}_{i,1}, \ldots, \tilde{x}_{i,n})
      &= 0.
           \label{eq:property:tilde:f:L:3}
  \end{align}

\end{itemize}
Based on this, the function $\hfL{i}$ can be characterized as follows.
\begin{itemize}

\item If there exists a $j \in [n]$ s.t.\ $\hat{x}_{i,j} = (1,1)$ and
  $\hat{x}_{i,j'} = (0,0)$ for all $j' \in [n] \setminus \{ j \}$, then
  \begin{align*}
    \hfL{i}(\hat{x}_{i,1}, \ldots, \hat{x}_{i,n})
      &= a_{i,j}.
  \end{align*}
  This follows from~\eqref{eq:property:tilde:f:L:1} and
  \begin{align*}
    \Phi(\tilde{x}_{i,j},\hat{x}_{i,j}) 
      &= \begin{cases}
           1 & \text{if $\tilde{x}_{i,j} = (1,1)$} \\
           0 & \text{otherwise}
         \end{cases}, \\
    \Phi(\tilde{x}_{i,j'},\hat{x}_{i,j'}) 
      &= \begin{cases}
           1 & \text{if $\tilde{x}_{i,j'} = (0,0)$} \\
           0 & \text{otherwise}
         \end{cases}.
  \end{align*}

\item If there exist $j, j' \in [n]$, $j \neq j'$, s.t.\
  $\hat{x}_{i,j} = (0,1)$, $\hat{x}_{i,j'} = (0,1)$ and
  $\hat{x}_{i,j''} = (0,0)$ for all $j'' \in [n] \setminus \{ j, j' \}$, then
  \begin{align*}
    \hfL{i}(\hat{x}_{i,1}, \ldots, \hat{x}_{i,n})
      &= \sqrt{a_{i,j} \cdot a_{i,j'}}.
  \end{align*}
  This follows from~\eqref{eq:property:tilde:f:L:2} and
  \begin{align*}
    \Phi(\tilde{x}_{i,j},\hat{x}_{i,j}) 
    &= \begin{cases}
         \frac{1}{\sqrt{2}} 
           & \text{if $\tilde{x}_{i,j} \in \bigl\{ (0,1), \ (1,0) \bigr\}$} \\
         0 & \text{otherwise}
       \end{cases}, \\
    \Phi(\tilde{x}_{i,j'},\hat{x}_{i,j'}) 
    &= \begin{cases}
         \frac{1}{\sqrt{2}} 
           & \text{if $\tilde{x}_{i,j'} \in \bigl\{ (0,1), \ (1,0) \bigr\}$} \\
         0 & \text{otherwise}
       \end{cases}, \\
    \Phi(\tilde{x}_{i,j''},\hat{x}_{i,j''}) 
      &= \begin{cases}
           1 & \text{if $\tilde{x}_{i,j''} = (0,0)$} \\
           0 & \text{otherwise}
         \end{cases}.
  \end{align*}

\item If there exist $j, j' \in [n]$, $j \neq j'$, s.t.\
  $\hat{x}_{i,j} = (1,0)$, $\hat{x}_{i,j'} = (1,0)$ and
  $\hat{x}_{i,j''} = (0,0)$ for all $j'' \in [n] \setminus \{ j, j' \}$, then
  \begin{align*}
    \hfL{i}(\hat{x}_{i,1}, \ldots, \hat{x}_{i,n})
      &= - \sqrt{a_{i,j} \cdot a_{i,j'}}.
  \end{align*}
  This follows from~\eqref{eq:property:tilde:f:L:2} and
  \begin{align*}
    \Phi(\tilde{x}_{i,j},\hat{x}_{i,j}) 
    &= \begin{cases}
         \frac{1}{\sqrt{2}} 
           & \text{if $\tilde{x}_{i,j} = (0,1)$} \\
         -\frac{1}{\sqrt{2}} 
           & \text{if $\tilde{x}_{i,j} = (1,0)$} \\
         0 & \text{otherwise}
       \end{cases}, \\
    \Phi(\tilde{x}_{i,j'},\hat{x}_{i,j'}) 
    &= \begin{cases}
         \frac{1}{\sqrt{2}} 
           & \text{if $\tilde{x}_{i,j'} = (0,1)$} \\
         -\frac{1}{\sqrt{2}} 
           & \text{if $\tilde{x}_{i,j'} = (1,0)$} \\
         0 & \text{otherwise}
       \end{cases}, \\
    \Phi(\tilde{x}_{i,j''},\hat{x}_{i,j''}) 
      &= \begin{cases}
           1 & \text{if $\tilde{x}_{i,j''} = (0,0)$} \\
           0 & \text{otherwise}
         \end{cases}.
  \end{align*}

\item Otherwise,
  \begin{align*}
    \hfL{i}(\hat{x}_{i,1}, \ldots, \hat{x}_{i,n})
      &= 0.
  \end{align*}
  This follows from~\eqref{eq:property:tilde:f:L:3} and/or from the exact
  cancellation of terms.

\end{itemize}

Finally, we can make a similar observation as in
Sections~\ref{sec:proof:proposition:1:case:n:2}
and~\ref{sec:proof:proposition:1:case:n:3}. Namely, because all the entries of
$\matr{T}_{\hat{E}_e}$ in row $(1,0)$ and all entries of
$\matr{T}_{\hat{E}_e}$ in column $(1,0)$ are equal to zero, it turns out that
$Z\bigl( \hat \graphN(\matr{A}) \bigr)$ is unchanged if the function $\hfL{i}$
is replaced by the function $\hfL{i}$ where
$\hfL{i}(\hat{x}_{i,1}, \ldots, \hat{x}_{i,n})$ is set to $0$ if there exists
a $j \in [n]$ such that $\hat{x}_{i,j} = (1,0)$. We then obtain
\begin{itemize}

\item If there exists a $j \in [n]$ s.t.\ $\hat{x}_{i,j} = (1,1)$ and
  $\hat{x}_{i,j'} = (0,0)$ for all $j' \in [n] \setminus \{ j \}$, then
  \begin{align*}
    \hfL{i}(\hat{x}_{i,1}, \ldots, \hat{x}_{i,n})
      &= a_{i,j}.
  \end{align*}

\item If there exist $j, j' \in [n]$, $j \neq j'$, s.t.\
  $\hat{x}_{i,j} = (0,1)$, $\hat{x}_{i,j'} = (0,1)$ and
  $\hat{x}_{i,j''} = (0,0)$ for all $j'' \in [n] \setminus \{ j, j' \}$, then
  \begin{align*}
    \hfL{i}(\hat{x}_{i,1}, \ldots, \hat{x}_{i,n})
      &= \sqrt{a_{i,j} \cdot a_{i,j'}}.
  \end{align*}

\item Otherwise,
  \begin{align*}
    \hfL{i}(\hat{x}_{i,1}, \ldots, \hat{x}_{i,n})
      &= 0.
  \end{align*}

\end{itemize}
The reader can verify that this matches the function $\hfL{i}$ stated in
Proposition~\ref{prop:average:double:cover:1} for arbitrary $n \in \dN$.

As mentioned earlier, here we only showed how to find $\tfL{i}$ and $\hfL{i}$,
$i \in [n]$, because $\tfR{j}$ and $\hfR{j}$, $j \in [n]$, are obtained in an
analogous manner.

\section{Proof of Proposition~\ref{prop:average:double:cover:2}}
\label{sec:proof:prop:average:double:cover:2}

In this appendix, we want to prove that
\begin{align*}
  \frac{\perm_{\Bethe,2}(\matr{A})}{\perm(\matr{A})}
   & = \sqrt
       {
         \sum_{\sig_1, \sig_2 \in \Sn}
           p(\sig_1)
           \cdot
           p(\sig_2)
           \cdot
           2^{-c(\sig_1,\sig_2)}
       },
\end{align*}
which can be rewritten as
\begin{align*}
  & \hspace{-0.50cm}
  \perm_{\Bethe, 2}(\matr{A})^2 \\
    &= \!\!\!\!\! 
       \sum_{\sig_1, \sig_2 \in \Sn}
       \!\!\!\!\!
         2^{-c(\sig_1,\sig_2)}
         \cdot
         \left(
           \prod_{i \in [n]}
             a_{i,\sig_1(i)}
         \right)
         \cdot
         \left(
           \prod_{i \in [n]}
             a_{i,\sig_2(i)}
         \right).
\end{align*}
However, because of Proposition~\ref{prop:average:double:cover:1}, this is
equivalent to proving
\begin{align}
  Z(\hat{\graphN}( \matr{A}) \bigr)
    &= \!\!\!\!\! 
       \sum_{\sig_1, \sig_2 \in \Sn}
       \!\!\!\!\! 
         2^{-c(\sig_1,\sig_2)}
         \! \cdot \!
         \left(
           \prod_{i \in [n]}
             a_{i,\sig_1(i)}
         \right)
         \! \cdot \!
         \left(
           \prod_{i \in [n]}
             a_{i,\sig_2(i)}
         \right) \! .
           \label{eq:proposition:reformulation:1}
\end{align}
In order to prove~\eqref{eq:proposition:reformulation:1}, we need to better
understand the valid configurations of $\graph{\hat N}(\matr{A})$ and their
global function value.

Consider the NFG $\graph{\hat N}(\matr{A})$ as specified in
Proposition~\ref{prop:average:double:cover:1}. Recall that its global function
is called $\hat{g}(\hvx)$, where
$\hvx \defeq (\hat{x}_{1,1}, \hat{x}_{1,2}, \ldots, \hat{x}_{n,n}) \in
\setset{\hat X}^{(n^2)}$ is an arbitrary configuration of
$\graph{\hat N}(\matr{A})$. Let
$\setset{C}\bigl( \graph{\hat N}(\matr{A}) \bigr)$ be the set of all valid
configurations of $\graph{\hat N}(\matr{A})$, i.e., the set of all
$\hvx \in \setset{\hat X}^{(n^2)}$ such that
$\hat{g}(\hvx) \neq 0$.\footnote{In the following, we assume that all entries
  of $\matr{A}$ are positive. The case where some entries of $\matr{A}$ are
  zero can be handled by suitable adaptations, or by considering a sequence of
  matrices with positive entries where some entries converge to $0$ in the
  limit and using continuity of the relevant expressions.}

Consider Fig.~\ref{fig:ffg:permanent:valid:configuration:1}, which is a
reproduction of~Fig.~\ref{fig:ffg:permanent:1}~(right). Let
$\hvx \in \setset{C}\bigl( \graph{\hat N}(\matr{A}) \bigr)$ be a valid
configuration. From Proposition~\ref{prop:average:double:cover:1} it follows
that every vertex of $\graph{\hat N}(\matr{A})$ is either
\begin{itemize}

\item the endpoint of exactly a $(1,1)$-edge, where a $(1,1)$-edge is defined
  to be an edge such that $\hat{x}_{i,j} = (1,1)$, or

\item a vertex of exactly one $(0,1)$-cycle, where a $(0,1)$-cycle is defined
  to be a simple cycle such that $\hat{x}_{i,j} = (0,1)$ for all its edges.

\end{itemize}

\begin{example}
  \label{example:valid:configuration:1}

  The NFG in Fig.~\ref{fig:ffg:permanent:valid:configuration:1} shows a
  possible valid configuration of $\graph{\hat N}(\matr{A})$ for the case
  $n = 5$. Here, an edge is colored in blue if $\hat{x}_{i,j} = (1,1)$, it is
  colored in red if $\hat{x}_{i,j} = (0,1)$, and it is colored in black if
  $\hat{x}_{i,j} = (0,0)$. (Note that $\hat{x}_{i,j} = (1,0)$ cannot occur in
  a valid configuration.)
\end{example}

\begin{figure}
  \begin{center}
    \includegraphics[width=0.45\linewidth]%
                    {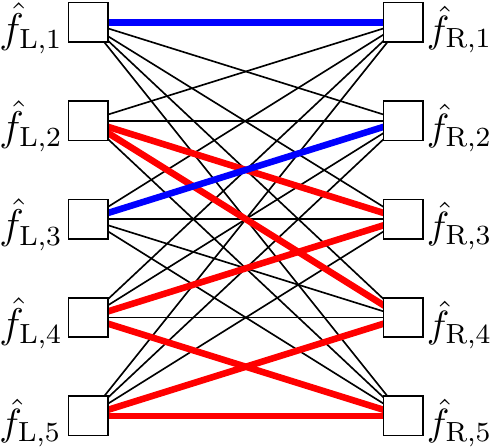}
  \end{center}
  \vspace{0.25cm}
  \caption{NFG used in
    Appendix~\ref{sec:proof:prop:average:double:cover:2}. The two
    $(1,1)$-edges are colored in blue and the edges of the $(0,1)$-cycle are
    colored in red.}
  \label{fig:ffg:permanent:valid:configuration:1}
\end{figure}

\begin{figure}
  \begin{center}
    \begin{subfigure}{0.45\linewidth}
      \includegraphics[width=0.85\linewidth]%
                      {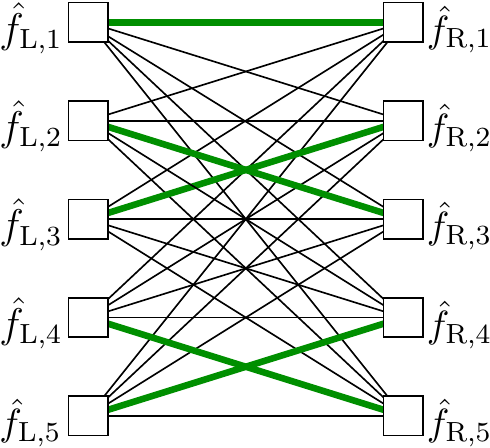}
    \end{subfigure}
    \begin{subfigure}{0.45\linewidth}
      \includegraphics[width=0.85\linewidth]%
                      {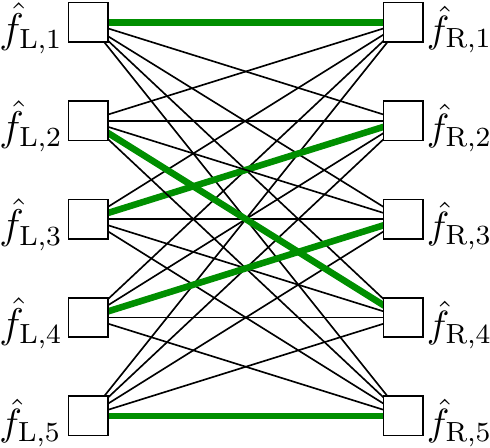}
    \end{subfigure}
  \end{center}
  \vspace{0.25cm}
  \caption{NFGs used in
    Appendix~\ref{sec:proof:prop:average:double:cover:2}. Visualization of
    permutations $\sig \in \Sn$. For $i, j \in [n]$, the edge connecting
    $\hfL{i}$ to $\hfR{j}$ is colored in green if and only if $\sig(i) =
    j$. (Here, $n = 5$.)}
  \label{fig:ffg:permanent:valid:configuration:2}
  \vspace{-0.5cm}
\end{figure}

\begin{definition}
  Consider the mapping
  \begin{alignat*}{3}
    h: \ 
      & \Sn \times \Sn
     && \to
     && \ \setset{C}\bigl( \graph{\hat N}(\matr{A}) \bigr) \\
      & (\sig_1, \sig_2)
     && \mapsto
     && \ \hvx,
  \end{alignat*}
  where $\hat{x}_{i,j}$, $i,j \in [n]$, is defined as follows:
  \begin{align*}
    \hat{x}_{i,j}
      &\defeq
         \begin{cases}
           (1,1) & \text{if $j = \sig_1(i) = \sig_2(i)$} \\
           (0,1) & \text{if $j = \sig_1(i) \neq \sig_2(i)$} \\
           (0,1) & \text{if $j = \sig_2(i) \neq \sig_1(i)$} \\
           (0,0) & \text{otherwise}
         \end{cases}.
  \end{align*}
  (It can be readily verified that, indeed,
  $\hvx \in \setset{C}\bigl( \graph{\hat N}(\matr{A}) \bigr)$.)
\end{definition}

\begin{example}
  \label{example:permutation:selection:1}

  One can verify that if $\sig_1, \sig_2 \in \Sn$ are such that
  \begin{align*}
    &
    \sig_1(1) = 1, \ 
    \sig_1(2) = 3, \ 
    \sig_1(3) = 2, \ 
    \sig_1(4) = 5, \ 
    \sig_1(5) = 4, \\
    &
    \sig_2(1) = 1, \ 
    \sig_2(2) = 4, \ 
    \sig_2(3) = 2, \ 
    \sig_2(4) = 3, \ 
    \sig_2(5) = 5,
  \end{align*}
  then $h(\sig_1, \sig_2)$ equals the valid configuration that is
  highlighted in the NFG in
  Fig.~\ref{fig:ffg:permanent:valid:configuration:1}. Note that $\sig_1$ and
  $\sig_2$ are visualized in
  Fig.~\ref{fig:ffg:permanent:valid:configuration:2}~(left) and
  Fig.~\ref{fig:ffg:permanent:valid:configuration:2}~(right),
  respectively.
\end{example}

\begin{example}
  \label{example:permutation:selection:2}

  Similarly, one can verify that if $\sig_1, \sig_2 \in \Sn$ are such that
  \begin{align*}
    &
    \sig_1(1) = 1, \ 
    \sig_1(2) = 4, \ 
    \sig_1(3) = 2, \ 
    \sig_1(4) = 3, \ 
    \sig_1(5) = 5, \\
    &
    \sig_2(1) = 1, \ 
    \sig_2(2) = 3, \ 
    \sig_2(3) = 2, \ 
    \sig_2(4) = 5, \ 
    \sig_2(5) = 4,
  \end{align*}
  then $h(\sig_1, \sig_2)$ equals the valid configuration that is
  highlighted in the NFG in
  Fig.~\ref{fig:ffg:permanent:valid:configuration:1}. Note that $\sig_1$ and
  $\sig_2$ are visualized in
  Fig.~\ref{fig:ffg:permanent:valid:configuration:2}~(right) and
  Fig.~\ref{fig:ffg:permanent:valid:configuration:2}~(left), respectively.
\end{example}

As can be seen from Examples~\ref{example:permutation:selection:1}
and~\ref{example:permutation:selection:2}, a $(0,1)$-cycle in the valid
configuration $h(\sig_1,\sig_2)$ arises from alternatingly picking an edge
selected by $\sig_1$ and an edge selected by $\sig_2$.

Toward proving the upcoming Lemmas~\ref{lemma:number:of:cycles:1}
and~\ref{lemma:number:of:pre:images:1}, we consider the following examples.

\begin{example}
  \label{example:permutation:selection:3}

  Consider again the selection of $\sig_1, \sig_2 \in \Sn$ in
  Example~\ref{example:permutation:selection:1}. Let
  $\sig \defeq \sig_1 \circ \sig_2^{-1}$, i.e., the permutation obtained by
  first applying the inverse of $\sig_2$ and then $\sig_1$. Then
  \begin{align*}
    &
    \sig(1) = 1, \ 
    \sig(2) = 2, \ 
    \sig(3) = 5, \ 
    \sig(4) = 3, \ 
    \sig(5) = 4,
  \end{align*}
  which in cycle notation is
  \begin{align*}
    \sig
      &= (1)(2)(354).
  \end{align*}
  Note that
  \begin{itemize}

  \item the cycle $(1)$ of $\sig$ of length~$1$ corresponds to the
    blue-colored edge ending in $\hfR{1}$ in
    Fig.~\ref{fig:ffg:permanent:valid:configuration:1},

  \item the cycle $(2)$ of $\sig$ of length~$1$ corresponds to the
    blue-colored edge ending in $\hfR{2}$ in
    Fig.~\ref{fig:ffg:permanent:valid:configuration:1},

  \item the cycle $(354)$ of $\sig$ of length~$3$ corresponds to the
    blue-colored $(0,1)$-cycle in
    Fig.~\ref{fig:ffg:permanent:valid:configuration:1} of length
    $2 \cdot 3 = 6$ going through the right-hand side vertices $\hfR{3}$,
    $\hfR{5}$, $\hfR{4}$.

  \end{itemize}
\end{example}

\begin{example}
  \label{example:permutation:selection:4}

  Consider again the selection of $\sig_1, \sig_2 \in \Sn$ in
  Example~\ref{example:permutation:selection:2}. Let
  $\sig \defeq \sig_1 \circ \sig_2^{-1}$. Then
  \begin{align*}
    &
    \sig(1) = 1, \ 
    \sig(2) = 2, \ 
    \sig(3) = 4, \ 
    \sig(4) = 5, \ 
    \sig(5) = 3,
  \end{align*}
  which in cycle notation is
  \begin{align*}
    \sig
      &= (1)(2)(345).
  \end{align*}
  Note that
  \begin{itemize}

  \item the cycle $(1)$ of $\sig$ of length~$1$ corresponds to the
    blue-colored edge ending in $\hfR{1}$ in
    Fig.~\ref{fig:ffg:permanent:valid:configuration:1},

  \item the cycle $(2)$ of $\sig$ of length~$1$ corresponds to the
    blue-colored edge ending in $\hfR{2}$ in
    Fig.~\ref{fig:ffg:permanent:valid:configuration:1},

  \item the cycle $(345)$ of $\sig$ of length~$3$ corresponds to the
    blue-colored $(0,1)$-cycle in
    Fig.~\ref{fig:ffg:permanent:valid:configuration:1} of length
    $2 \cdot 3 = 6$ going through the right-hand side vertices $\hfR{3}$,
    $\hfR{4}$, $\hfR{5}$.

  \end{itemize}
\end{example}

For $\sig_1, \sig_2 \in \Sn$, recall from
Proposition~\ref{sec:proof:prop:average:double:cover:2} that
$c(\sig_1,\sig_2)$ is defined to be the number of cycles of length larger than
one in the cycle notation expression of the permutation
$\sig_1 \circ \sig_2^{-1}$.

\begin{lemma}
  \label{lemma:number:of:cycles:1}

  For any $\sig_1, \sig_2 \in \Sn$, the number of $(0,1)$-cycles in the valid
  configuration $h(\sig_1, \sig_2)$ equals $c(\sig_1, \sig_2)$.
\end{lemma}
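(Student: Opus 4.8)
The plan is to track how the two $(0,1)$-cycles arise from $\sig_1$ and $\sig_2$, and show that they are in natural bijection with the orbits of $\sig \defeq \sig_1 \circ \sig_2^{-1}$ of length larger than one. First I would fix $\sig_1, \sig_2 \in \Sn$ and examine the configuration $\hvx = h(\sig_1, \sig_2)$. Recall that an edge $\hat x_{i,j}$ is a $(0,1)$-edge precisely when exactly one of $\sig_1(i) = j$ or $\sig_2(i) = j$ holds; i.e., $(0,1)$-edges are the edges selected by exactly one of the two permutations. Every left vertex $\hfL{i}$ with $\sig_1(i) \neq \sig_2(i)$ is incident to exactly two $(0,1)$-edges — one going to $\hfR{\sig_1(i)}$ and one to $\hfR{\sig_2(i)}$ — and likewise, by looking at preimages, every right vertex $\hfR{j}$ with $\sig_1^{-1}(j) \neq \sig_2^{-1}(j)$ is incident to exactly two $(0,1)$-edges. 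So the subgraph formed by the $(0,1)$-edges is a disjoint union of simple cycles that alternate between $\sig_1$-selected and $\sig_2$-selected edges (as already noted in the text following Example~\ref{example:permutation:selection:2}), together with isolated vertices.

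Next I would set up the bijection explicitly. Start at any right vertex $\hfR{j}$ lying on a $(0,1)$-cycle, i.e.\ with $\sig_1^{-1}(j) \neq \sig_2^{-1}(j)$. Walking along the cycle from $\hfR{j}$ one first traverses the $\sig_2$-edge backwards to the left vertex $\hfL{\sig_2^{-1}(j)}$, then the $\sig_1$-edge forwards to the right vertex $\hfR{\sig_1(\sig_2^{-1}(j))} = \hfR{\sig(j)}$; so one ``step'' of the cycle, restricted to the right vertices, applies $\sig$. Iterating, the right vertices visited by this $(0,1)$-cycle are exactly $j, \sig(j), \sig^2(j), \ldots$, which is the $\sig$-orbit of $j$; the cycle closes after $\ell$ steps, where $\ell$ is the orbit length, giving a $(0,1)$-cycle of (edge-)length $2\ell$. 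Since $\hfR{j}$ lies on a $(0,1)$-cycle iff $\sig(j) \neq j$ (equivalently $\sig_1^{-1}(j) \neq \sig_2^{-1}(j)$), the map sending a $(0,1)$-cycle to the corresponding $\sig$-orbit on its right vertices is well-defined, and it is a bijection onto the set of $\sig$-orbits of length $> 1$: injectivity holds because the cycle is determined by its set of right vertices (walk them in the order dictated by $\sig$), and surjectivity holds because any $\sig$-orbit of length $> 1$ reconstructs a unique alternating $(0,1)$-cycle by the walk above. Hence the number of $(0,1)$-cycles equals the number of $\sig$-orbits of length larger than one, which by definition is $c(\sig_1, \sig_2)$. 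Examples~\ref{example:permutation:selection:3} and~\ref{example:permutation:selection:4} illustrate exactly this correspondence.

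The main obstacle is being careful that the walk is genuinely well-defined and cyclic — in particular that, on a left vertex $\hfL{i}$ lying on the $(0,1)$-subgraph, the two incident $(0,1)$-edges are genuinely distinct (this uses $\sig_1(i) \neq \sig_2(i)$, which is exactly the condition $\hfL{i}$ is not the endpoint of a $(1,1)$-edge and not isolated), and similarly on the right side, so that ``enter along one edge, leave along the other'' is unambiguous and the traversal is forced. One should also note the degenerate possibility $\ell = 1$ is excluded: if $\sig(j) = j$ then $j$ is not on any $(0,1)$-cycle but is instead the right endpoint of a $(1,1)$-edge (when $\sig_1(i) = \sig_2(i)$ with $\sig_2(i) = j$) or a right vertex incident only to $(0,0)$-edges, consistent with counting only cycles of length $> 1$ of $\sig$. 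Once these incidence facts are nailed down, the bijection and the count follow immediately.
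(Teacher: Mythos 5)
Your proposal is correct and follows essentially the same route as the paper: the paper's proof simply says to generalize Examples~\ref{example:permutation:selection:3} and~\ref{example:permutation:selection:4} by noting that a length-$L$ cycle of $\sig_1 \circ \sig_2^{-1}$ ($L \geq 2$) corresponds to a $(0,1)$-cycle of length $2L$, which is exactly the walk-based bijection you spell out. Your version just makes explicit the degree-two argument on the $(0,1)$-subgraph and the fact that one alternating step acts as $\sig_1 \circ \sig_2^{-1}$ on the right-vertex indices, which is a welcome filling-in of the details the paper leaves to the reader.
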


\begin{proof}
  This follows from generalizing
  Examples~\ref{example:permutation:selection:3}
  and~\ref{example:permutation:selection:4}. In particular, note that a cycle
  of length $L$, $L \geq 2$, of $\sig_1 \circ \sig_2^{-1}$ corresponds to a
  $(0,1)$-cycle of length $2 L$ in the valid configuration $h(\sig_1, \sig_2)$
\end{proof}

\begin{lemma}
  \label{lemma:number:of:pre:images:1}

  Fix an arbitrary
  $\hvx \in \setset{C}\bigl( \graph{\hat N}(\matr{A}) \bigr)$. The number of
  pre-images of $\hvx$ under the mapping $h$ equals
  $2^{c(\sig_1,\sig_2)}$, i.e.,
  \begin{align*}
    \Bigl|
      \bigl\{ 
        (\sig_1, \sig_2) \in \Sn \times \Sn
      \bigm|
        h(\sig_1, \sig_2) = \hvx
      \bigr\}
    \Bigr|
      &= 2^{c(\sig_1,\sig_2)}.
  \end{align*}
\end{lemma}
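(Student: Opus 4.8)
The plan is to fix an arbitrary valid configuration $\hvx \in \setset{C}\bigl( \graph{\hat N}(\matr{A}) \bigr)$ and count the pairs $(\sig_1,\sig_2)$ with $h(\sig_1,\sig_2) = \hvx$ by exploiting the structure of $\hvx$. Recall from the structural description following Proposition~\ref{prop:average:double:cover:1} that $\hvx$ is the edge-disjoint union of some $(1,1)$-edges and some $(0,1)$-cycles; let $c$ denote the number of these $(0,1)$-cycles. By Lemma~\ref{lemma:number:of:cycles:1}, every pre-image $(\sig_1,\sig_2)$ of $\hvx$ satisfies $c(\sig_1,\sig_2) = c$, so the claim reduces to showing that $\hvx$ has exactly $2^{c}$ pre-images.

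First I would show that a pre-image is completely forced on the $(1,1)$-edges and is determined up to one binary choice per $(0,1)$-cycle. If $h(\sig_1,\sig_2) = \hvx$ and $(i,j)$ is a $(1,1)$-edge, then the definition of $h$ forces $\sig_1(i) = \sig_2(i) = j$. Next, let $C$ be a $(0,1)$-cycle; being a simple cycle of the complete bipartite graph, it has even length $2L$ with $L \geq 2$ and alternates between left and right vertices. Each left vertex $i$ on $C$ is incident with exactly two edges of $C$, ending in distinct right vertices $j \neq j'$; since $h(\sig_1,\sig_2)$ labels exactly these two edges at $i$ with $(0,1)$ and all other edges at $i$ with $(0,0)$, one of $j,j'$ is $\sig_1(i)$ and the other is $\sig_2(i)$. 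Hence the edges of $C$ selected by $\sig_1$ form a perfect matching of $C$, and those selected by $\sig_2$ form the complementary matching. An even cycle has exactly two perfect matchings, and the choices on distinct (vertex-disjoint) $(0,1)$-cycles are independent, so $\hvx$ has at most $2^{c}$ pre-images.

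Conversely I would verify that every one of these combinations genuinely produces a pre-image. Fixing one perfect matching on each $(0,1)$-cycle and declaring every $(1,1)$-edge to belong to both matchings, each left vertex is covered by exactly one ``$\sig_1$-edge'' and each right vertex by exactly one such edge, so setting $\sig_1(i)$ to be the right endpoint of the $\sig_1$-edge at $i$ defines a permutation, and likewise for $\sig_2$. One then checks $h(\sig_1,\sig_2) = \hvx$ directly: every $(1,1)$-edge gets label $(1,1)$; on a $(0,1)$-cycle edge $(i,j)$ one has $j \in \{\sig_1(i),\sig_2(i)\}$ with $\sig_1(i) \neq \sig_2(i)$ (the two cycle edges at $i$ go to distinct right vertices), so it gets label $(0,1)$; and every remaining edge gets label $(0,0)$. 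Distinct combinations give distinct pairs, since the two matchings of a cycle are edge-disjoint. This yields exactly $2^{c} = 2^{c(\sig_1,\sig_2)}$ pre-images.

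The substantive step — everything else being routine bookkeeping — is the observation that choosing which edges of a $(0,1)$-cycle are ``$\sig_1$-edges'' is precisely a choice of perfect matching of an even cycle, of which there are exactly two. The only technical care needed is to note that $(0,1)$-cycles cannot be too short (a simple cycle in a bipartite graph has length $\geq 4$), so the two-matchings count always applies.
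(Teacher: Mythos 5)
Your proof is correct and follows essentially the same route as the paper: the paper attributes the factor of $2$ per $(0,1)$-cycle to the two walk directions around the cycle, which is exactly your two perfect matchings of an even cycle, and both arguments then use independence across the vertex-disjoint cycles. Your write-up is somewhat more complete than the paper's (which only gestures at generalizing its examples), since you also verify the forced behavior on $(1,1)$-edges and check the converse direction that every combination of choices yields a genuine pre-image.
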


\begin{proof}
  This follows from generalizing
  Examples~\ref{example:permutation:selection:1}--\ref{example:permutation:selection:4}. In
  particular, note that for every $(0,1)$-cycle in $\hvx$ there are two walk
  directions for going around the $(0,1)$-cycle, and with that two different
  ways of defining the relevant function values of $\sig_1$ and
  $\sig_2$. Because the walk direction can be chosen independently for every
  $(0,1)$-cycle in $\hvx$, there are, ultimately, $2^{c(\sig_1,\sig_2)}$ ways
  of selecting $\sig_1, \sig_2 \in \Sn$ such that $h(\sig_1,\sig_2) = \hvx$.
\end{proof}

\begin{lemma}
  \label{lemma:global:function:value:1}

  For any $\sig_1, \sig_2 \in \Sn$, it holds that
  \begin{align*}
    \hat{g}\bigl( h(\sig_1,\sig_2) \bigr)
      &= \left(
           \prod_{i \in [n]}
             a_{i,\sig_1(i)}
         \right)
         \cdot
         \left(
           \prod_{i \in [n]}
             a_{i,\sig_2(i)}
         \right).
  \end{align*}
\end{lemma}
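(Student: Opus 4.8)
The plan is to evaluate $\hat{g}\bigl( h(\sig_1,\sig_2) \bigr)$ by expanding it, according to the definition of $\hat{g}$ in Proposition~\ref{prop:average:double:cover:1}, as the product of the local functions $\hfL{i}$, $i \in [n]$, and $\hfR{j}$, $j \in [n]$, evaluated at the configuration $h(\sig_1,\sig_2)$, and then showing that each left factor equals $\sqrt{a_{i,\sig_1(i)} \cdot a_{i,\sig_2(i)}}$ while each right factor equals $\sqrt{a_{\sig_1^{-1}(j),j} \cdot a_{\sig_2^{-1}(j),j}}$.

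First I would fix $i \in [n]$ and read off $\hfL{i}$ directly from the definition of $h$. Exactly one of two situations occurs in row $i$. If $\sig_1(i) = \sig_2(i) =: j$, then $\hat{x}_{i,j} = (1,1)$ and $\hat{x}_{i,j'} = (0,0)$ for all $j' \neq j$, so the first case in the definition of $\hfL{i}$ applies and $\hfL{i} = a_{i,j}$. If $\sig_1(i) \neq \sig_2(i)$, then $\hat{x}_{i,\sig_1(i)} = \hat{x}_{i,\sig_2(i)} = (0,1)$ and all remaining entries of row $i$ are $(0,0)$, so the second case applies and $\hfL{i} = \sqrt{a_{i,\sig_1(i)} \cdot a_{i,\sig_2(i)}}$. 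Since in the first situation $\sig_1(i) = \sig_2(i)$ forces $a_{i,j} = \sqrt{a_{i,\sig_1(i)} \cdot a_{i,\sig_2(i)}}$, both situations are captured by the single formula $\hfL{i} = \sqrt{a_{i,\sig_1(i)} \cdot a_{i,\sig_2(i)}}$. Multiplying over $i$ then gives $\prod_{i \in [n]} \hfL{i} = \sqrt{ \bigl( \prod_{i \in [n]} a_{i,\sig_1(i)} \bigr) \cdot \bigl( \prod_{i \in [n]} a_{i,\sig_2(i)} \bigr) }$.

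Next I would carry out the mirror-image computation for the right nodes. Fixing $j \in [n]$, the entries of column $j$ differing from $(0,0)$ are precisely the $\hat{x}_{i,j}$ with $i \in \{ \sig_1^{-1}(j), \sig_2^{-1}(j) \}$: when these two pre-images coincide, say to $i$, then $\hat{x}_{i,j} = (1,1)$ and $\hfR{j} = a_{i,j}$; when they differ, both corresponding entries equal $(0,1)$ and $\hfR{j} = \sqrt{ a_{\sig_1^{-1}(j),j} \cdot a_{\sig_2^{-1}(j),j} }$. As on the left side the two cases merge into $\hfR{j} = \sqrt{ a_{\sig_1^{-1}(j),j} \cdot a_{\sig_2^{-1}(j),j} }$, so $\prod_{j \in [n]} \hfR{j} = \sqrt{ \bigl( \prod_{j \in [n]} a_{\sig_1^{-1}(j),j} \bigr) \cdot \bigl( \prod_{j \in [n]} a_{\sig_2^{-1}(j),j} \bigr) }$. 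The only step needing a moment's care is the reindexing $\prod_{j \in [n]} a_{\sig_k^{-1}(j),j} = \prod_{i \in [n]} a_{i,\sig_k(i)}$ for $k \in \{1,2\}$, which is just the substitution $j = \sig_k(i)$ together with the fact that $\sig_k$ is a bijection of $[n]$.

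Combining the two products yields $\hat{g}\bigl( h(\sig_1,\sig_2) \bigr) = \bigl( \prod_{i \in [n]} \hfL{i} \bigr) \cdot \bigl( \prod_{j \in [n]} \hfR{j} \bigr) = \bigl( \prod_{i \in [n]} a_{i,\sig_1(i)} \bigr) \cdot \bigl( \prod_{i \in [n]} a_{i,\sig_2(i)} \bigr)$, which is the claim. I do not expect a genuine obstacle here: the argument is a direct unfolding of the definitions of $h$, $\hfL{i}$, $\hfR{j}$, and $\hat{g}$. The only things to be careful about are bookkeeping — checking that in every row and every column the configuration $h(\sig_1,\sig_2)$ lands in the first or second case of the relevant local function rather than in the ``$0$'' case (in particular that the value $(1,0)$ never appears, as already noted after the definition of $h$, and that the ``otherwise'' rows/columns carry only $(0,0)$ entries) — and the index substitution on the $\hfR{j}$ side.
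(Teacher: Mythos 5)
Your proof is correct. Both you and the paper start from the same expansion $\hat{g}\bigl(h(\sig_1,\sig_2)\bigr) = \bigl(\prod_i \hfL{i}\bigr)\cdot\bigl(\prod_j \hfR{j}\bigr)$, but you organize the product differently. The paper decomposes the configuration into its connected components --- each $(1,1)$-edge contributes $a_{i,j}^2$ from its two endpoints, and each $(0,1)$-cycle of length $2L$ contributes a telescoping product of $2L$ square-root factors --- which relies on the structural observation that every vertex lies on exactly one $(1,1)$-edge or exactly one $(0,1)$-cycle. You instead compute each local factor in closed form, unifying the two cases into the single expressions $\hfL{i} = \sqrt{a_{i,\sig_1(i)}\, a_{i,\sig_2(i)}}$ and $\hfR{j} = \sqrt{a_{\sig_1^{-1}(j),j}\, a_{\sig_2^{-1}(j),j}}$, and then combine everything with one global reindexing $\prod_j a_{\sig_k^{-1}(j),j} = \prod_i a_{i,\sig_k(i)}$. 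Your route is slightly more elementary for this particular lemma, since it reads off everything row by row and column by column from the definition of $h$ and never invokes the cycle decomposition (which the paper needs anyway for Lemmas~\ref{lemma:number:of:cycles:1} and~\ref{lemma:number:of:pre:images:1}, but not logically for this one); the paper's component-wise telescoping, on the other hand, makes visible exactly how each cycle's weight is assembled, which is the picture reused in the surrounding lemmas. The only implicit ingredient in your merged square-root formulas is the non-negativity of the entries of $\matr{A}$, which is given.
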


\begin{proof}
  Fix some $\sig_1, \sig_2 \in \Sn$. Let $\hvx \defeq h(\sig_1,\sig_2)$. Then
  \begin{align*}
    \hat{g}\bigl( h(\sig_1,\sig_2) \bigr)
      &= \hat{g}(\hvx) \\
      &= \left(
           \prod_{i \in [n]}
             \hfL{i}(\hat x_{i,1}, \ldots, \hat x_{i,n})
         \right) \\
      &\hspace{0.25cm}
         \cdot
         \left(
           \prod_{j \in [n]}
             \hfR{j}(\hat x_{1,j}, \ldots, \hat x_{n,j})
         \right) \\
      &= \left(
           \prod_{i \in [n]}
             a_{i,\sig_1(i)}
         \right)
         \cdot
         \left(
           \prod_{i \in [n]}
             a_{i,\sig_2(i)}
         \right).
  \end{align*}
  where the second equality follows from the specification of
  $\graph{\hat N}(\matr{A})$ in Proposition~\ref{prop:average:double:cover:1}
  and where the third equality is a consequence of the following observations:
  \begin{itemize}
    
  \item As seen above (see the text before
    Example~\ref{example:valid:configuration:1}), every vertex of
    $\graph{\hat N}(\matr{A})$ is either the endpoint of exactly one
    $(1,1)$-edge or the vertex of exactly one $(0,1)$-cycle.

  \item Consider a $(1,1)$-edge of $\hvx$ connecting $\hfL{i}$ and
    $\hfR{j}$. Because
    \begin{align*}
      \hfL{i}(\hat x_{i,1}, \ldots, \hat x_{i,n})
        &= a_{i,j}, \\
      \hfR{j}(\hat x_{1,j}, \ldots, \hat x_{n,j})
        &= a_{i,j},
    \end{align*}
    we get a contribution of $a_{i,j}^2$ from the two endpoints of this
    $(1,1)$-edge. Note that
    \begin{align*}
      a_{i,j}^2 
        &= a_{i,\sigma_1(i)} \cdot a_{i,\sigma_2(i)}.
    \end{align*}

  \item Consider a $(0,1)$-cycle of length $2L$, $L \geq 2$, of $\hvx$ through
    $\hfL{i_1}, \hfR{j_1}, \hfL{i_2}, \hfR{j_2}, \ldots, \hfL{i_L}, \hfR{j_L},
    \hfL{i_1}$, where
    \begin{alignat*}{3}
      \sig_1(i_1) &= j_1, \ \
      \sig_1(i_2) &= j_2, \ \ 
      \ldots, \ \ 
      \sig_1(i_L) &= j_L, \\
      \sig_2(i_1) &= j_L, \ \ 
      \sig_2(i_2) &= j_1, \ \ 
      \ldots, \ \ 
      \sig_2(i_L) &= j_{L-1}.
    \end{alignat*}
    Because
    \begin{align*}
      \hfL{i_1}(\hat x_{i_1,1}, \ldots, \hat x_{i_1,n})
        &= \sqrt{a_{i_1,j_L} \cdot a_{i_1,j_1}}, \\
      \hfR{j_1}(\hat x_{1,j_1}, \ldots, \hat x_{n,j_1})
        &= \sqrt{a_{i_1,j_1} \cdot a_{i_2,j_1}}, \\
      \hfL{i_2}(\hat x_{i_2,1}, \ldots, \hat x_{i_2,n})
        &= \sqrt{a_{i_2,j_1} \cdot a_{i_2,j_2}}, \\
      \hfR{j_2}(\hat x_{1,j_2}, \ldots, \hat x_{n,j_2})
        &= \sqrt{a_{i_2,j_2} \cdot a_{i_3,j_2}}, \\
      \vdots \hspace{0.5cm}
        &\hspace{1.0cm} 
           \vdots \\
      \hfL{i_L}(\hat x_{i_L,1}, \ldots, \hat x_{i_L,n})
        &= \sqrt{a_{i_L,j_{L-1}} \cdot a_{i_L,j_L}}, \\
      \hfR{j_L}(\hat x_{1,j_L}, \ldots, \hat x_{n,j_L})
        &= \sqrt{a_{i_L,j_L} \cdot a_{i_1,j_L}},
    \end{align*}
    we get a contribution of 
    \begin{align*}
      \left(
        \prod_{\ell=1}^{L}
          a_{i_{\ell},\sigma_1(i_{\ell})}
      \right)
      \cdot
      \left(
        \prod_{\ell=1}^{L}
          a_{i_{\ell},\sigma_2(i_{\ell})}
      \right)
    \end{align*}
    from the $2L$ vertices of this $(0,1)$-cycle.

  \end{itemize}
\end{proof}

Putting together the above results, we can finally prove
Proposition~\ref{sec:proof:prop:average:double:cover:2}. Namely, we obtain
\begin{align*}
  Z(\hat{\graphN}\bigl( \matr{A}) \bigr)
    &= \sum_{\hvx \in \setset{C}(\graph{\hat N}(\matr{A}))}
         \hat{g}(\hvx) \\
    &= \sum_{\hvx \in \setset{C}(\graph{\hat N}(\matr{A}))} \ 
         \sum_{\substack{\sig_1, \sig_2 \in \setset{S}_n: \\ h(\sig_1,\sig_2) = \hvx}}
         2^{-c(\sig_1,\sig_2)}
         \cdot
         \hat{g}\bigl( \hvx \bigr) \\
    &= \sum_{\sig_1, \sig_2 \in \Sn}
         2^{-c(\sig_1,\sig_2)}
         \cdot
         \hat{g}\bigl( h(\sig_1,\sig_2) \bigr) \\
    &= \!\!\!\!\! 
       \sum_{\sig_1, \sig_2 \in \Sn}
       \!\!\!\!\! 
         2^{-c(\sig_1,\sig_2)}
         \! \cdot \!
         \left(
           \prod_{i \in [n]} \!
             a_{i,\sig_1(i)}
         \right)
         \! \cdot \!
         \left(
           \prod_{i \in [n]} \!
             a_{i,\sig_2(i)}
         \right) \! ,
\end{align*}
where the second equality follows from
Lemma~\ref{lemma:number:of:pre:images:1} and where the fourth equality follows
from Lemma~\ref{lemma:global:function:value:1}. This proves the validity
of~\eqref{eq:proposition:reformulation:1}, and with that the validity
of Proposition~\ref{sec:proof:prop:average:double:cover:2}.

\section{Proof of Theorem~\ref{theorem:all:one:matrix:1}}
\label{sec:proof:theorem:all:one:matrix:1}

Let $\group{G}$ be a subgroup of $\Sn$. The cycle index of $\group{G}$
is defined to be (see, e.g., \cite[Section 6.6]{Comtet1974})
\begin{align*}
  Z(\group{G})
   & \defeq
  \frac{1}{\abs{\group{G}}}
  \sum_{\sig \in \group{G}}
  \prod_{k \in [n]} z_k^{c_k(\sig)},
\end{align*}
where $c_k(\sig)$, $k \in [n]$, denotes the number of cycles of length~$k$ in
the cycle notation expression of the permutation $\sig$, and where $z_k$,
$k \in [n]$, are indeterminates. If $\group{G} = \Sn$, which is the case of
interest here, then
\begin{align*}
  Z(\Sn)
   & \defeq
  \frac{1}{\abs{\Sn}}
  \sum_{\sig \in \Sn}
  \prod_{k \in [n]} z_k^{c_k(\sig)} \\
   & = \frac{1}{n!}
  \sum_{\sig \in \Sn}
  \prod_{k \in [n]} z_k^{c_k(\sig)}.
\end{align*}
The following well-known result gives a convenient recursive expression for
$Z(\Sn)$.

\begin{lemma}
  Define $Z(\symgrp{0}) \defeq 1$. For $n \ge 1$ it holds that
  \begin{align}
    Z(\Sn)
     & = \frac{1}{n}
    \sum_{\l \in [n]}
    z_\l
    \cdot
    Z(\symgrp{n - \l}).
    \label{eq:cycle-index:recursion}
  \end{align}
\end{lemma}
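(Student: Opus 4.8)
The plan is to prove the recursion in~\eqref{eq:cycle-index:recursion} by a counting argument that tracks, for each permutation $\sig \in \Sn$, the cycle containing the element $n$. The starting point is the observation that a permutation of $[n]$ can be built uniquely from (i) a choice of cycle length $\l \in [n]$ for the cycle containing $n$, (ii) a choice of the remaining $\l - 1$ elements of that cycle from $[n-1]$, (iii) a cyclic arrangement of these $\l$ elements into a cycle, and (iv) an arbitrary permutation of the leftover $n - \l$ elements. This is the classical decomposition underlying the exponential formula, and it immediately organizes the sum $\sum_{\sig \in \Sn} \prod_k z_k^{c_k(\sig)}$ by the value of $\l$.

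First I would fix $\l \in [n]$ and count the contribution of all $\sig \in \Sn$ whose cycle through $n$ has length $\l$. The number of ways to pick the other $\l - 1$ elements of that cycle is $\binom{n-1}{\l-1}$, and the number of distinct $\l$-cycles on a fixed $\l$-element set is $(\l-1)!$; together these give $\binom{n-1}{\l-1}(\l-1)! = \frac{(n-1)!}{(n-\l)!}$ choices for the cycle containing $n$. Each such choice contributes a factor $z_\l$ to $\prod_k z_k^{c_k(\sig)}$, and the restriction of $\sig$ to the remaining $n-\l$ elements ranges over all of $\symgrp{n-\l}$ (after relabeling), contributing $\sum_{\tau \in \symgrp{n-\l}} \prod_k z_k^{c_k(\tau)} = (n-\l)! \cdot Z(\symgrp{n-\l})$. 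Multiplying these pieces and summing over $\l$ yields
\begin{align*}
  \sum_{\sig \in \Sn} \prod_{k \in [n]} z_k^{c_k(\sig)}
    &= \sum_{\l \in [n]}
         \frac{(n-1)!}{(n-\l)!}
         \cdot z_\l
         \cdot (n-\l)! \cdot Z(\symgrp{n-\l})
     = (n-1)! \sum_{\l \in [n]} z_\l \cdot Z(\symgrp{n-\l}).
\end{align*}
Dividing both sides by $n! = n \cdot (n-1)!$ gives exactly~\eqref{eq:cycle-index:recursion}, with the base case $Z(\symgrp{0}) = 1$ handling the term $\l = n$ (where the leftover permutation is the empty permutation on the empty set).

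The one point that deserves a careful word rather than a routine dismissal is the claim that the restriction of $\sig$ to the $n - \l$ elements outside the $n$-cycle ``ranges over all of $\symgrp{n-\l}$ with the right multiplicity'': one must note that this restriction is genuinely a permutation of that $(n-\l)$-element set (it has no cycles touching the chosen $\l$-cycle, by definition of cycle decomposition), that the exponents $c_k$ split additively as $c_k(\sig) = [\l = k] + c_k(\sig|_{\text{rest}})$ for the purposes of this bookkeeping, and that every pair (chosen $\l$-cycle through $n$, permutation of the rest) arises from exactly one $\sig$. These are all immediate from the uniqueness of the disjoint-cycle decomposition, so I do not anticipate a real obstacle; the proof is essentially a reorganization of the defining sum. (An alternative, equally short route is to differentiate the exponential generating function identity $\sum_{n \ge 0} Z(\Sn) t^n = \exp\bigl(\sum_{k \ge 1} z_k t^k / k\bigr)$ with respect to $t$ and compare coefficients, but the direct combinatorial argument above is self-contained and matches the elementary style of the surrounding exposition.)
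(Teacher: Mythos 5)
Your proof is correct and follows essentially the same route as the paper's: both partition $\Sn$ by the length $\l$ of the cycle containing $n$, count the $\binom{n-1}{\l-1}(\l-1)!$ choices for that cycle, extract the factor $z_\l$, and reduce the remaining sum to $(n-\l)!\cdot Z(\symgrp{n-\l})$ before dividing by $n!$. The extra care you take in justifying the multiplicity and the additive splitting of the exponents $c_k$ is a welcome elaboration of a step the paper leaves implicit, but it is not a different argument.
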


\begin{proof}
  For $\l \in [n]$, let $\Sn(\l)$ be subset of $\Sn$ that contains all
  permutations $\sig$ such that $n$ is contained in a cycle of $\sig$ of
  length $\l$. Note that
  $\bigl| \Sn(\l) \bigr| / \bigl| \symgrp{n-\l} \bigr| = \binom{n - 1}{\l - 1}
    \cdot (\l - 1)!$. Therefore,
  \begin{align*}
    Z(\Sn)
     & = \frac{1}{n!}
    \sum_{\sig \in \Sn}
    \prod_{k \in [n]} z_k^{c_k(\sig)}        \\
     & = \frac{1}{n!}
    \sum_{\l \in [n]}
    \sum_{\sig \in \Sn(\l)}
    \prod_{k \in [n]} z_k^{c_k(\sig)}        \\
     & = \frac{1}{n!}
    \sum_{\l \in [n]}
    \binom{n - 1}{\l - 1}
    \cdot
    (\l - 1)!
    \cdot
    z_\l
    \cdot
    \!\!\!
    \sum_{\sig \in \symgrp{n - \l}}
      \prod_{k \in [n-\l]} z_k^{c_k(\sig)} \\
     & = \frac{1}{n}
    \sum_{\l \in [n]}
    z_\l
    \cdot
    Z(\mathcal{S}_{n - \l}).
  \end{align*}
\end{proof}

For $n \in \dN \cup \{ 0 \}$, define $Z_n \defeq Z(\Sn)$, where
\begin{align*}
  z_k
   & \defeq
  \begin{cases}
    1           & \text{if $k = 1$}    \\
    \frac{1}{2} & \text{if $k \geq 2$}
  \end{cases}
\end{align*}
Rewriting~\eqref{eq:cycle-index:recursion} for this choice of $z_k$,
$k \geq 1$, we obtain
\begin{align}
  Z_n
   & = \frac{1}{n}
       Z_{n-1}
       +
       \frac{1}{2n}
       \sum_{\l = 2}^n
         Z_{n-\l}.
  \label{eq:cycle-index:recursion:2}
\end{align}

Contemplating Fig.~\ref{all-1-mat:Zn-vs-inv-sqrt-n-loglog}, it appears
that $Z_n = \Theta(\frac{1}{\sqrt{n}})$. In the following, we will prove this
observation. In fact, we will prove
\begin{align*}
  \dfrac{\Cl}{\sqrt{n}}
   & \leq
  Z_n
  \leq
  \dfrac{\Cu}{\sqrt{n}}, \ n \geq 1,
  \qquad \text{and} \qquad
  Z_n
  \sim
  \frac{C}{\sqrt{n}},
\end{align*}
where
\begin{align}
  \Cl
   & \defeq
  \frac{1}{\sqrt{2}},
  \quad
  \Cu
  \defeq
  \frac{3}{2 \sqrt{2}},
  \quad
  C
  \defeq
  \sqrt{\frac{e}{\pi}}.
  \label{eq:def:const:Cl:Cu:C}
\end{align}

\begin{figure}
  \begin{center}
    \includegraphics[width=\columnwidth]{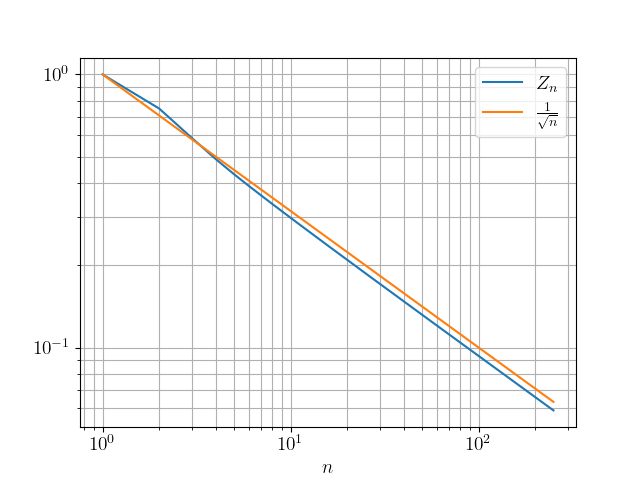}
    \caption{Graph comparing the functions $n \mapsto Z_n$ with $n \mapsto
        \frac{1}{\sqrt{n}}$ for $n \in [250]$.}
    \label{all-1-mat:Zn-vs-inv-sqrt-n-loglog}

    \includegraphics[width=\columnwidth]{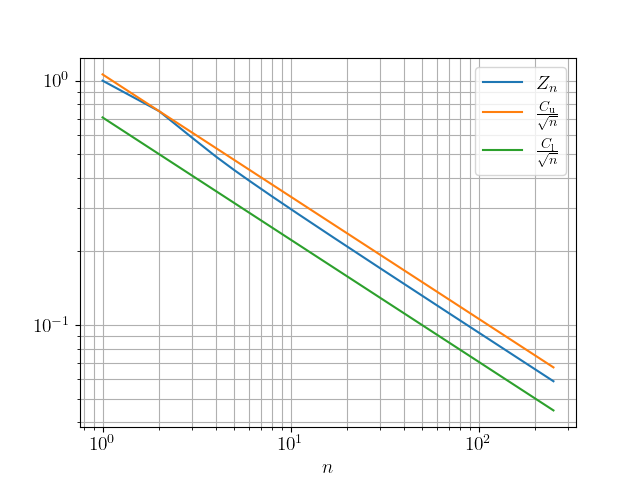}
    \caption{Graph comparing the functions $n \mapsto Z_n$ with
      $n \mapsto \frac{\Cu}{\sqrt{n}}$ and $n \mapsto \frac{\Cl}{\sqrt{n}}$ for
      $n \in [250]$.}
    \label{all-1-mat:Zn-vs-bounds-loglog}
  \end{center}
\end{figure}

\begin{lemma}
  \label{lemma:Zn-ub}
  For all $n \in \dN$ it holds that
  \begin{align*}
    Z_n
     & \leq
    \dfrac{\Cu}{\sqrt{n}},
  \end{align*}
  where $\Cu$ was defined in~\eqref{eq:def:const:Cl:Cu:C}. Thus
  $Z_n = \textnormal{O}\bigl( \frac{1}{\sqrt{n}} \bigr)$.
\end{lemma}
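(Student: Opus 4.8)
The plan is to prove the bound by strong induction on $n$, driven by the recursion~\eqref{eq:cycle-index:recursion:2}. First I would rewrite it: multiplying~\eqref{eq:cycle-index:recursion:2} by $n$ and peeling off the term $Z_0 = 1$ from $\sum_{\ell = 2}^{n} Z_{n - \ell} = \sum_{k = 0}^{n-2} Z_k$ gives, for $n \geq 2$,
\begin{align*}
  n Z_n
    &= Z_{n-1}
       +
       \frac{1}{2}
       +
       \frac{1}{2}
       \sum_{k=1}^{n-2} Z_k ,
\end{align*}
the last sum being empty for $n = 2$. Directly from~\eqref{eq:cycle-index:recursion:2} and $Z_0 = 1$ one reads off $Z_1 = 1 \leq \Cu$ and $Z_2 = 3/4 = \Cu / \sqrt{2}$, which serve as the base cases $n = 1, 2$; note that the case $n = 2$ holds with equality and is exactly what pins down the value of $\Cu$ in~\eqref{eq:def:const:Cl:Cu:C}.

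For the inductive step, fix $n \geq 3$ and assume $Z_k \leq \Cu / \sqrt{k}$ for all $k$ with $1 \leq k \leq n-1$. I would combine this with the elementary estimate $\sum_{k=1}^{m} k^{-1/2} \leq 2\sqrt{m} - 1$ for $m \geq 1$ (obtained from $k^{-1/2} \leq \int_{k-1}^{k} x^{-1/2}\,\mathrm{d}x$ for $k \geq 2$, since $x \mapsto x^{-1/2}$ is decreasing), so that the displayed recursion gives
\begin{align*}
  n Z_n
    &\leq \frac{\Cu}{\sqrt{n-1}}
          +
          \frac{1}{2}
          +
          \frac{\Cu}{2}
          \bigl( 2\sqrt{n-2} - 1 \bigr)
     =    \Cu \sqrt{n-2}
          +
          \frac{\Cu}{\sqrt{n-1}}
          +
          \frac{1 - \Cu}{2} .
\end{align*}
To finish, I would invoke two elementary facts: by concavity of $x \mapsto \sqrt{x}$ one has $\sqrt{n} + \sqrt{n-2} \leq 2\sqrt{n-1}$, hence $\sqrt{n} - \sqrt{n-2} = 2 / (\sqrt{n} + \sqrt{n-2}) \geq 1 / \sqrt{n-1}$ and therefore $\Cu / \sqrt{n-1} \leq \Cu\sqrt{n} - \Cu\sqrt{n-2}$; and $\Cu = 3 / (2\sqrt{2}) > 1$, so $(1 - \Cu)/2 < 0$. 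Plugging these into the last display collapses it to $n Z_n \leq \Cu\sqrt{n}$, i.e.\ $Z_n \leq \Cu / \sqrt{n}$, which is the claim for $n$ and completes the induction; the consequence $Z_n = \textnormal{O}(1/\sqrt{n})$ is then immediate.

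The step I expect to be the main obstacle is the calibration of the constants so that the induction actually closes. The cruder estimate $\sum_{k=1}^{m} k^{-1/2} \leq 2\sqrt{m}$ would leave an irreducible additive $1/2$ in the bound on $n Z_n$, and then the argument fails for large $n$; it is essential both to use the sharper form $2\sqrt{m} - 1$ and to have $\Cu > 1$ so that the leftover term $(1 - \Cu)/2$ is negative, while $\Cu$ must still be small enough that the base case $n = 2$ holds (in fact it holds with equality). Past that balancing act, the proof is routine estimation.
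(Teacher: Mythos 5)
Your proof is correct and follows essentially the same route as the paper's: strong induction with base cases $n=1,2$ on the recursion~\eqref{eq:cycle-index:recursion:2}, bounding the partial sum $\sum_k k^{-1/2}$ by an integral comparison. The only difference is in how the final elementary inequality is closed -- you use concavity of $x \mapsto \sqrt{x}$ together with $\Cu > 1$, while the paper verifies positivity and monotonicity of an auxiliary function $f$ by checking $f(2)>0$ and $f'>0$; both close the induction, and your variant is, if anything, slightly cleaner.
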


\begin{proof}
  The proof is by strong induction.

  Base cases ($n = 1$ and $n = 2$):
  $Z_1 = 1 \leq \frac{3}{2\sqrt{2}} = \frac{\Cu}{\sqrt{1}}$ and
  $Z_2 = \frac{3}{4} = \frac{3}{2\sqrt{2}} \cdot \frac{1}{\sqrt{2}} =
    \frac{\Cu}{\sqrt{2}}$.

  Induction step ($n = k + 1$ for some $k \in \dN$, $k \geq 2$): Assume the
  claim is true for every $j \in
    [k]$. Using~\eqref{eq:cycle-index:recursion:2}, we obtain
  \begin{pbalign*}
    Z_{k + 1}
    &= \frac{Z_k}{k + 1}
    +
    \frac{1}{2(k + 1)}\sum_{j = 2}^{k + 1} Z_{k + 1 - j} \\
    &= \frac{2 + Z_k}{2(k + 1)}
    +
    \frac{1}{2(k + 1)}\sum_{j = 2}^{k - 1} Z_{k + 1 - j} \\
    &= \frac{2 + Z_k}{2(k + 1)}
    +
    \frac{1}{2(k + 1)} \sum_{j = 2}^k Z_j \\
    &\leq
    \frac{1}{k + 1} \cdot \Big(1 + \frac{\Cu}{2\sqrt{k}}\Big)
    +
    \frac{1}{2(k + 1)}\sum_{j = 2}^k \frac{\Cu}{\sqrt{j}} \\
    &\leq
    \frac{\Cu}{k + 1} \cdot \Big(\frac{1}{\Cu} + \frac{1}{2\sqrt{k}}\Big)
    +
    \frac{\Cu}{2(k + 1)}
    \int_1^k \frac{1}{\sqrt{x}} \operatorname{d}\!x \\
    &= \frac{\Cu}{k + 1}
    \cdot
    \left(
    \frac{1}{\Cu}
    + \frac{1}{2\sqrt{k}}
    + (\sqrt{k} - 1)
    \right) \\
    &\leq
    \frac{\Cu}{k + 1} \cdot \sqrt{k + 1} \\
    &= \frac{\Cu}{\sqrt{k + 1}}.
  \end{pbalign*}
  Notice that the last inequality is valid since the function
  $f: \dRPP \to \dR, \ x \mapsto \sqrt{x + 1} - \sqrt{x} -\frac{1}{2\sqrt{x}} +
    1 - \frac{2\sqrt{2}}{3}$ is positive for $x \geq 2$, which can be proven by
  the observing that $f(2) \approx 0.0215 > 0$ and that $f$ is strictly
  increasing because its derivative satisfies
  $f'(x) = \frac{1}{2\sqrt{x + 1}} - \frac{1}{2\sqrt{x}} + \frac{1}{4x^{3/2}}
    > 0$ for all $x > 0$.
\end{proof}

\begin{lemma}\label{lemma:Zn-lb}
  For all $n \in \dN$ it holds that
  \begin{align*}
    Z_n
     & \geq
    \dfrac{\Cl}{\sqrt{n}},
  \end{align*}
  where $\Cl$ was defined in~\eqref{eq:def:const:Cl:Cu:C}. Thus
  $Z_n = \Omega\bigl( \frac{1}{\sqrt{n}} \bigr)$.
\end{lemma}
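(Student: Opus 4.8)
The plan is to establish this bound by strong induction on $n$, in close parallel with the proof of Lemma~\ref{lemma:Zn-ub}. For the base cases I take $n = 1$, where $Z_1 = 1 \geq 1/\sqrt{2} = \Cl/\sqrt{1}$, and $n = 2$, where~\eqref{eq:cycle-index:recursion:2} gives $Z_2 = \tfrac{1}{2} Z_1 + \tfrac{1}{4} Z_0 = \tfrac{3}{4} \geq \tfrac{1}{2} = \Cl/\sqrt{2}$.

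For the induction step, I would fix $k \geq 2$ and assume $Z_j \geq \Cl/\sqrt{j}$ for every $j \in [k]$. Reindexing the sum in~\eqref{eq:cycle-index:recursion:2} via $m = n - \l$ and using $Z_0 = Z_1 = 1$, one gets
\[
  2(k+1)\, Z_{k+1}
    \;=\; 2 Z_k + \sum_{m=0}^{k-1} Z_m
    \;=\; 2 Z_k + 2 + \sum_{m=2}^{k-1} Z_m .
\]
Applying the induction hypothesis to $Z_k$ and to each summand $Z_m$, and then bounding the tail sum from below by an integral through $\sum_{m=2}^{k-1} m^{-1/2} \geq \int_{2}^{k} x^{-1/2}\,\mathrm{d} x = 2\sqrt{k} - 2\sqrt{2}$, yields
\[
  2(k+1)\, Z_{k+1}
    \;\geq\; \frac{2\Cl}{\sqrt{k}} + 2 + \Cl\bigl( 2\sqrt{k} - 2\sqrt{2} \bigr)
    \;=\; \frac{2\Cl}{\sqrt{k}} + 2\Cl\sqrt{k} + \bigl( 2 - 2\sqrt{2}\,\Cl \bigr).
\]

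The crucial observation — and the reason behind the particular value $\Cl = 1/\sqrt{2}$ — is that the leftover constant vanishes: $2 - 2\sqrt{2}\,\Cl = 0$. Hence $2(k+1)\, Z_{k+1} \geq 2\Cl\bigl( k^{-1/2} + k^{1/2} \bigr) = 2\Cl\,(k+1)/\sqrt{k}$, so that $Z_{k+1} \geq \Cl/\sqrt{k} \geq \Cl/\sqrt{k+1}$, which closes the induction. I do not anticipate a genuine obstacle here; the only points that need a little care are the direction of the sum--integral comparison (each term $m^{-1/2}$ dominates $\int_{m}^{m+1} x^{-1/2}\,\mathrm{d} x$, so the partial sum over-estimates the integral over $[2,k]$) and checking that the boundary cases — in particular the empty tail sum when $k = 2$ — are handled consistently, which is why the induction step is run for $k \geq 2$ with the two base cases $n = 1$ and $n = 2$.
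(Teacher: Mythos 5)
Your proposal is correct and follows essentially the same route as the paper's own proof: strong induction on the recursion~\eqref{eq:cycle-index:recursion:2}, peeling off the $Z_0 = Z_1 = 1$ terms, bounding the remaining partial sum from below by an integral of $x^{-1/2}$, and exploiting the exact cancellation $2 - 2\sqrt{2}\,\Cl = 0$ (the paper's version of the same cancellation is $1/\Cl - \sqrt{2} = 0$). The only differences are cosmetic bookkeeping — you clear the denominator $2(k+1)$ first and integrate over $[2,k]$ instead of $[2,k+1]$ — and your handling of the empty tail sum at $k=2$ is consistent.
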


\begin{proof}
  The proof is by strong induction.

  Base cases ($n = 1$ and $n = 2$):
  $Z_1 = 1 \geq \frac{1}{\sqrt{2}} = \frac{\Cl}{\sqrt{1}}$ and
  $Z_2 = \frac{3}{4} = \frac{3}{2} \cdot \frac{1}{\sqrt{2}} \cdot
    \frac{1}{\sqrt{2}} \geq \frac{\Cl}{\sqrt{2}}$.

  Induction step ($n = k + 1$ for some $k \in \dN$, $k \geq 2$): Assume the
  claim is true for every $j \in
    [k]$. Using~\eqref{eq:cycle-index:recursion:2}, we obtain
  \begin{align*}
    Z_{k + 1}
     & = \frac{Z_k}{k + 1}
    +
    \frac{1}{2(k + 1)}\sum_{j = 2}^{k + 1} Z_{k + 1 - j}   \\
     & = \frac{2 + Z_k}{2(k + 1)}
    +
    \frac{1}{2(k + 1)}\sum_{j = 2}^{k - 1} Z_{k + 1 - j}   \\
     & \geq
    \frac{1}{k + 1} \cdot \Big(1 + \frac{\Cl}{2\sqrt{k}}\Big)
    +
    \frac{1}{2(k + 1)} \sum_{j = 2}^k \frac{\Cl}{\sqrt{j}} \\
     & \geq
    \frac{\Cl}{k + 1}
    \cdot
    \Big(
    \frac{1}{\Cl} + \frac{1}{2\sqrt{k}}
    \Big)
    +
    \frac{\Cl}{2(k + 1)}
    \int_2^{k + 1}
    \frac{1}{\sqrt{x}} \operatorname{d}\!x                 \\
     & = \frac{\Cl}{k + 1}
    \cdot
    \Big(
    \frac{1}{\Cl} + \frac{1}{2\sqrt{k}} + (\sqrt{k + 1} - \sqrt{2})
    \Big)                                                  \\
     & \geq
    \frac{\Cl}{k + 1} \cdot \sqrt{k + 1}                   \\
     & = \frac{\Cl}{\sqrt{k + 1}}.
  \end{align*}
  Notice that the last inequality is valid since the function
  $f: \dRPP \to \dR, \ x \mapsto \frac{1}{2\sqrt{x}}$ is positive for
  $x \geq 2$.
\end{proof}

\begin{proposition}\label{prop:Zn-asym-func}
  It holds that
  \begin{align*}
    Z_n
     & = \Theta\Big(\dfrac{1}{\sqrt{n}}\Big).
  \end{align*}
\end{proposition}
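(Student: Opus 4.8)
The plan is to obtain the $\Theta$ statement directly by combining the two one-sided bounds that have already been established. Lemma~\ref{lemma:Zn-ub} gives $Z_n \leq \Cu/\sqrt{n}$ for all $n \in \dN$, i.e.\ $Z_n = \textnormal{O}\bigl( 1/\sqrt{n} \bigr)$, and Lemma~\ref{lemma:Zn-lb} gives $Z_n \geq \Cl/\sqrt{n}$ for all $n \in \dN$, i.e.\ $Z_n = \Omega\bigl( 1/\sqrt{n} \bigr)$. Since $\Cl$ and $\Cu$ as defined in~\eqref{eq:def:const:Cl:Cu:C} are fixed strictly positive constants independent of $n$, putting the two estimates together yields $\Cl/\sqrt{n} \leq Z_n \leq \Cu/\sqrt{n}$ for every $n \geq 1$, which is exactly the assertion $Z_n = \Theta\bigl( 1/\sqrt{n} \bigr)$.

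There is essentially no obstacle remaining at this point: the entire substance of the argument already resides in the inductive proofs of Lemmas~\ref{lemma:Zn-ub} and~\ref{lemma:Zn-lb}, whose delicate parts are (i) verifying the base cases $n = 1$ and $n = 2$ against the constants $\Cl$ and $\Cu$, and (ii) checking positivity of the auxiliary functions $f$ that drive the induction steps (for the upper bound via $f(2) > 0$ together with $f' > 0$, and for the lower bound via positivity of $1/(2\sqrt{x})$), in each case using the recursion~\eqref{eq:cycle-index:recursion:2}. Granting those two lemmas, the proof of the proposition is a single line that invokes both of them. If a sharper statement is wanted, the same circle of ideas — but with the more careful estimate of $Z_n$ through the complete exponential Bell polynomial $B_n$ sketched after Corollary~\ref{cor:prop:average:double:cover:2:1} — upgrades $\Theta(1/\sqrt{n})$ to $Z_n \sim \sqrt{e/(\pi n)}$; however, that refinement is not required for the present claim, so I would simply state the proof as the conjunction of the two lemmas.
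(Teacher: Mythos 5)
Your proposal is correct and coincides with the paper's own proof, which likewise derives the proposition as an immediate consequence of Lemmas~\ref{lemma:Zn-ub} and~\ref{lemma:Zn-lb}. The additional remarks about the base cases, the auxiliary functions driving the inductions, and the possible refinement to $Z_n \sim \sqrt{\e/(\pi n)}$ are accurate but not needed for this statement.
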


\begin{proof}
  This follows immediately from \Cref{lemma:Zn-ub,lemma:Zn-lb}.
\end{proof}

A comparison of the function $n \mapsto Z_n$ with the upper and lower bounds
in Lemmas~\ref{lemma:Zn-ub} and~\ref{lemma:Zn-lb} is shown in
Fig.~\ref{all-1-mat:Zn-vs-bounds-loglog}.

Toward proving $Z_n \sim \frac{C}{\sqrt{n}}$, where
$C \defeq \sqrt{\frac{e}{\pi}}$, we use the following ingredients:
\begin{itemize}

  \item It holds that
    \begin{align*}
      Z(\Sn)
       & = \frac{1}{n!}
      \cdot
      B_n\bigl( 0! \cdot z_1, 1! \cdot z_2, \ldots, (n - 1)! \cdot z_n \bigr),
    \end{align*}
    where $B_n$ denotes the $n$-th complete exponential Bell polynomial. (This
    relationship follows from the fact that a permutation is, by definition,
    bijective, and from identifying each monomial in $Z(\Sn)$ with a
    partitioning of $[n]$.) Consequently,
    \begin{align*}
      Z_n
       & = \frac{1}{n!}
      \cdot
      B_n
      \left(
      0! \cdot 1, 1! \cdot \frac{1}{2}, \ldots, (n - 1)! \cdot \frac{1}{2}
      \right).
    \end{align*}

  \item It holds that (see, e.g., \cite[Theorem 4.38]{Wilf2005})
    \begin{align*}
      Z(\Sn)
       & = \left.
      \left(
      \frac{\pa}{\pa t}
      \right)^{\!\! n}
      C(t) \
      \right|_{t = 0},
    \end{align*}
    where
    \begin{align*}
      C(t)
       & \defeq
      \exp
      \left(
      \sum_{k = 1}^\infty
      \frac{z_k t^k}{k}
      \right)
    \end{align*}
    is the generating function of the cycle index of the symmetric group.

  \item We recall the following well-known power series, which are convergent
    in a neighborhood around $t = 0$:
    \begin{align*}
      \ln (1 - t)
       & = -
      \sum_{k = 1}^\infty
      \frac{t^k}{k},           \\
      \e^t
       & = \sum_{k = 0}^\infty
      \frac{t^k}{k!},          \\
      (1 - t)^{-1/2}
       & = \sum_{k = 0}^\infty
      \frac{(2k)!}{4^k \cdot (k!)^2} t^k.
    \end{align*}

  \item We recall the following bounds on the $k$-th central binomial
    coefficient for $k \in \dN$:
    \begin{align}
      \label{eq:cbc-approx}
      \frac{4^k}{\sqrt{(k + \frac{1}{2})\pi}}
       & \leq \binom{2k}{k} \leq \frac{4^k}{\sqrt{k\pi}}.
    \end{align}

\end{itemize}

\begin{lemma}\label{lemma:Zn-asym-lb}
  It holds that
  \[
    \lim_{n \to \infty}Z_n \cdot \sqrt{n} \geq C.
  \]
\end{lemma}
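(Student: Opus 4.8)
The plan is to reduce the claim to an explicit formula for $Z_n$ obtained from a closed form of the cycle-index generating function, and then to lower-bound that formula term by term using the central-binomial-coefficient estimate~\eqref{eq:cbc-approx}.

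First I would evaluate $C(t) = \exp\bigl(\sum_{k \geq 1} z_k t^k / k\bigr)$ for the choice $z_1 = 1$, $z_k = \tfrac12$ ($k \geq 2$). Using the stated series $\ln(1-t) = -\sum_{k\geq 1} t^k/k$, the exponent collapses to $\tfrac{t}{2} - \tfrac12 \ln(1-t)$, so that
\begin{align*}
  C(t)
    &= \e^{t/2} \, (1-t)^{-1/2}.
\end{align*}
(The same expression also drops out of the recursion~\eqref{eq:cycle-index:recursion:2} alone, without relying on the precise normalization of the quoted generating-function identity: that recursion forces $F(t) \defeq \sum_{n \geq 0} Z_n t^n$ to satisfy $F'(t)/F(t) = \tfrac12 + \tfrac{1}{2(1-t)}$ with $F(0)=1$, which integrates to the same thing.) Multiplying the power series for $\e^{t/2}$ and for $(1-t)^{-1/2}$ recalled in the excerpt, and writing $\binom{2k}{k} = (2k)!/(k!)^2$, I would then read off the convolution
\begin{align*}
  Z_n
    &= \sum_{k=0}^{n}
         \binom{2k}{k}\frac{1}{4^k}
         \cdot
         \frac{1}{2^{n-k}\,(n-k)!}.
\end{align*}

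Next I would bound each summand from below by the left-hand estimate in~\eqref{eq:cbc-approx}, namely $\binom{2k}{k}/4^k \geq 1/\sqrt{(k+\tfrac12)\pi}$, and re-index by $m = n-k$, obtaining
\begin{align*}
  \sqrt{n}\,Z_n
    &\geq
       \frac{1}{\sqrt{\pi}}
       \sum_{m=0}^{n}
         \sqrt{\frac{n}{\,n-m+\tfrac12\,}}
         \cdot
         \frac{1}{2^m\,m!}.
\end{align*}
All terms here are nonnegative, so for a fixed $M \in \dN$ and all $n \geq M$ I may discard the terms with $m > M$; since $\sqrt{n/(n-m+\tfrac12)} \to 1$ as $n \to \infty$ for each fixed $m$ and the truncated sum is finite, taking $\liminf_{n\to\infty}$ and then $M \to \infty$ gives
\begin{align*}
  \liminf_{n \to \infty} \sqrt{n}\,Z_n
    &\geq
       \frac{1}{\sqrt{\pi}}
       \sum_{m \geq 0}\frac{1}{2^m\,m!}
     = \frac{\e^{1/2}}{\sqrt{\pi}}
     = \sqrt{\frac{\e}{\pi}}
     = C.
\end{align*}
(Equivalently, this last step is Fatou's lemma for nonnegative series applied to the previous display.) Together with a matching upper bound on $\limsup_{n\to\infty}\sqrt{n}\,Z_n$, this also secures existence of the limit; in any case the stated inequality follows.

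I do not expect a serious obstacle here: once the closed form of $C(t)$ and the convolution formula for $Z_n$ are in hand, the remainder is routine real analysis. The points needing a little care are that the truncation step goes in the right direction — harmless, since the summands are nonnegative — and the interchange of the limit in $n$ with the finite truncated sum; and one should keep in mind that the substantive statement is the bound on the $\liminf$, the genuine limit in the lemma's statement being pinned down only in conjunction with the companion upper-bound argument.
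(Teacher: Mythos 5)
Your proposal is correct and follows essentially the same route as the paper's proof: the same closed form $C(t)=\e^{t/2}(1-t)^{-1/2}$, the same convolution formula for $Z_n$, and the same lower bound from~\eqref{eq:cbc-approx}. The only cosmetic difference is the final limiting step, where the paper uses the uniform bound $(n-\ell+\tfrac12)^{-1/2}\geq(n+\tfrac12)^{-1/2}$ in place of your truncation-plus-Fatou argument; both are valid and equally routine.
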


\begin{proof}
  Substituting
  \begin{align*}
    z_k
     & \defeq
    \begin{cases}
      1           & \text{if $k = 1$}    \\
      \frac{1}{2} & \text{if $k \geq 2$}
    \end{cases}
  \end{align*}
  into $C(t)$, we get
  \begin{align*}
    C(t)
     & = \exp\left(t + \frac{1}{2}\sum_{k = 2}^\infty \frac{t^k}{k}\right) \\
     & = \exp\left(t + \frac{1}{2}(-\ln(1 - t) - t)\right)                 \\
     & = \exp\left(\frac{1}{2}(t -\ln(1 - t))\right)                       \\
     & = \e^{t/2} (1 - t)^{-1/2}                                           \\
     & = \left(
    \sum_{k = 0}^\infty \frac{(\frac{t}{2})^k}{k!}
    \right)
    \cdot
    \left(
    \sum_{k = 0}^\infty \frac{(2k)!}{4^k \cdot (k!)^2}t^k
    \right)                                                                \\
     & = \sum_{n = 0}^\infty
    \sum_{\l = 0}^n
    \frac{1}{2^\l \cdot \l!}
    \cdot
    \frac{1}{4^{n - \l}}
    \cdot
    \binom{2(n - \l)}{n - \l}
    \cdot
    t^n.
  \end{align*}
  Letting $\tau \defeq \frac{1}{2}$, we obtain for all $n \in \dN$
  \begin{align*}
    Z_n
     & = \sum_{\l = 0}^n
    \frac{\tau^\l}{\l!} \cdot \frac{1}{4^{n - \l}}
    \cdot
    \binom{2(n - \l)}{n - \l}                           \\
     & \overset{\mathrm{(a)}}{\geq}
    \frac{\tau^n}{n!}
    +
    \sum_{\l = 0}^{n - 1}
    \frac{\tau^\l}{\l!}
    \cdot
    \frac{1}{4^{n - \l}}
    \cdot
    \frac{4^{n - \l}}{\sqrt{(n - \l + \frac{1}{2})\pi}} \\
     & \geq
    \frac{\tau^n}{n!}
    +
    \sum_{\l = 0}^{n - 1}
    \frac{\tau^\l}{\l!}
    \cdot
    \frac{1}{\sqrt{(n + \frac{1}{2})\pi}}               \\
     & \geq
    \frac{1}{\sqrt{n}}
    \cdot
    \frac{1}{\sqrt{(1 + \frac{1}{2n})\pi}}
    \sum_{\l = 0}^n
    \frac{\tau^\l}{\l!},
  \end{align*}
  where step~$\mathrm{(a)}$ is due to \eqref{eq:cbc-approx}. It follows that
  \begin{align*}
    \lim_{n \to \infty} Z_n \! \cdot \! \sqrt{n}
     & \geq
    \lim_{n \to \infty} \! \frac{1}{\sqrt{(1 + \frac{1}{2n})\pi}}
    \sum_{\l = 0}^n \frac{\tau^\l}{\l!}
    = \frac{\exp(\tau)}{\sqrt{\pi}}
    = \sqrt{\frac{e}{\pi}}.
  \end{align*}
\end{proof}

\begin{lemma}\label{lemma:Zn-asym-ub}
  It holds that
  \begin{align*}
    \lim_{n \to \infty}Z_n \cdot \sqrt{n} \leq C.
  \end{align*}
\end{lemma}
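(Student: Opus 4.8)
The plan is to leverage the exact series representation of $Z_n$ obtained in the proof of Lemma~\ref{lemma:Zn-asym-lb}, namely
\begin{align*}
  Z_n
    &= \sum_{\l = 0}^n
         \frac{\tau^\l}{\l!}
         \cdot
         \frac{1}{4^{n - \l}}
         \cdot
         \binom{2(n - \l)}{n - \l},
       \qquad \tau \defeq \tfrac{1}{2},
\end{align*}
and to apply to it the \emph{upper} bound $\binom{2k}{k} \leq 4^k / \sqrt{k\pi}$ from~\eqref{eq:cbc-approx} (valid for $k \geq 1$), handling the $\l = n$ term separately. This yields
\begin{align*}
  Z_n
    &\leq
       \frac{\tau^n}{n!}
       +
       \frac{1}{\sqrt{\pi}}
       \sum_{\l = 0}^{n - 1}
         \frac{\tau^\l}{\l!}
         \cdot
         \frac{1}{\sqrt{n - \l}},
\end{align*}
so it remains to show that $\sqrt{n}$ times the right-hand side has $\limsup$ at most $C = \sqrt{e/\pi}$.

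To this end I would split the sum at a cut-off $g(n) \defeq \lfloor \sqrt{n} \rfloor$, chosen so that $g(n) \to \infty$ while $g(n) = \textnormal{o}(n)$. For the head ($\l \leq g(n)$), since then $n - \l \geq n - g(n)$, I bound $1/\sqrt{n - \l} \leq 1/\sqrt{n - g(n)}$ and extend the nonnegative partial sum $\sum_{\l \leq g(n)} \tau^\l/\l!$ up to $\e^\tau$, obtaining
\begin{align*}
  \sqrt{n}
  \cdot
  \frac{1}{\sqrt{\pi}}
  \sum_{\l = 0}^{g(n)}
    \frac{\tau^\l}{\l!}
    \cdot
    \frac{1}{\sqrt{n - \l}}
    &\leq
       \sqrt{\frac{n}{n - g(n)}}
       \cdot
       \frac{\e^\tau}{\sqrt{\pi}}
     \xrightarrow[n \to \infty]{}
       \frac{\e^{1/2}}{\sqrt{\pi}}
     = C,
\end{align*}
using $g(n)/n \to 0$. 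For the tail ($g(n) < \l \leq n - 1$) I crudely bound $1/\sqrt{n - \l} \leq 1$ and exploit the super-polynomial decay of the exponential tail: since consecutive ratios satisfy $\tau/(\l + 1) \leq \tau$, one gets $\sum_{\l > m} \tau^\l/\l! \leq 2\tau^{m + 1}/(m + 1)!$, so with $m = g(n) = \lfloor \sqrt{n} \rfloor$ the product $\sqrt{n} \sum_{\l > g(n)} \tau^\l/\l!$ tends to $0$; likewise $\sqrt{n}\,\tau^n/n! \to 0$. Adding the three contributions gives $\limsup_{n \to \infty} Z_n \sqrt{n} \leq C$, which together with Lemma~\ref{lemma:Zn-asym-lb} also shows that the limit exists and equals $C$, i.e.\ $Z_n \sim C/\sqrt{n}$.

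The one genuinely delicate point — and the reason a naive split at $n/2$ would only give the weaker bound $\sqrt{2}\,C$ — is the choice of cut-off: it must grow slowly enough that $g(n) = \textnormal{o}(n)$ and hence $\sqrt{n/(n - g(n))} \to 1$, yet fast enough (any fixed positive power of $n$ suffices) that multiplying the exponential tail by $\sqrt{n}$ still vanishes; $g(n) = \lfloor \sqrt{n} \rfloor$ comfortably meets both requirements. Everything else reduces to routine estimates already in the spirit of the proof of Lemma~\ref{lemma:Zn-asym-lb}.
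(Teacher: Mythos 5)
Your proposal is correct and follows essentially the same route as the paper: the same series representation of $Z_n$ from the generating function, the same central-binomial-coefficient upper bound from~\eqref{eq:cbc-approx}, and the same head/tail split at a cut-off that is $\omega_n(1)$ and $\textnormal{o}(n)$ (the paper uses an abstract $h(n)$ where you instantiate $\lfloor \sqrt{n} \rfloor$). The only substantive difference is the tail estimate: the paper invokes Robbins' form of Stirling's approximation for $\l!$ and then sums a geometric series in $\bigl( \tau \e / h(n) \bigr)^{\l}$, whereas your ratio-test bound $\sum_{\l > m} \tau^{\l}/\l! \leq 2 \tau^{m+1}/(m+1)!$ is more elementary and suffices since $\sqrt{n} \cdot \tau^{m+1}/(m+1)! \to 0$ for $m = \lfloor \sqrt{n} \rfloor$; this is a clean simplification and the rest of the argument is sound.
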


\begin{proof}
  The proof is similar to the proof of \Cref{lemma:Zn-asym-lb}. In particular,
  from the proof of \Cref{lemma:Zn-asym-lb} we know that for all $n \in \dN$
  it holds that
  \begin{align*}
    Z_n
     & = \sum_{\l = 0}^n
    \frac{\tau^\l}{\l!}
    \cdot
    \frac{1}{4^{n - \l}}
    \cdot
    \binom{2(n - \l)}{n - \l}.
  \end{align*}
  Using \eqref{eq:cbc-approx}, we obtain
  \begin{align*}
    Z_n
     & \leq
    \frac{\tau^n}{n!}
    +
    \sum_{\l = 0}^{n - 1}
    \frac{\tau^\l}{\l!}
    \cdot
    \frac{1}{4^{n - \l}}
    \cdot
    \frac{4^{n - \l}}{\sqrt{(n - \l)\pi}} \\
     & = \frac{\tau^n}{n!}
    +
    \frac{1}{\sqrt{\pi}}
    \sum_{\l = 0}^{n - 1}
    \frac{\tau^\l}{\l!}
    \cdot
    \frac{1}{\sqrt{n - \l}} \\
     & = \frac{\tau^n}{n!}
    +
    \frac{1}{\sqrt{\pi}} \!
    \left(
    \sum_{\l = 0}^{h(n)}
    \frac{\tau^\l}{\l!}
    \! \cdot \!
    \frac{1}{\sqrt{n - \l}}
    + \!\!\!
    \sum_{\l = h(n) + 1}^{n - 1}
    \frac{\tau^\l}{\l!}
    \! \cdot \!
    \frac{1}{\sqrt{n - \l}}
    \right) \\
     & \leq
    \frac{\tau^n}{n!}
    +
    \frac{1}{\sqrt{\pi}} \!
    \left(
      \sum_{\l = 0}^{h(n)} \!
      \frac{\tau^\l}{\l!}
      \! \cdot \!
      \frac{1}{\sqrt{n - h(n)}}
      \! + \!\!\!\!\!
      \sum_{\l = h(n) + 1}^{n - 1} \!\!
      \frac{\tau^\l}{\l!}
      \! \cdot \!
      \frac{1}{\sqrt{n - \l}}
    \right) \\
     & =
    \frac{\tau^n}{n!}
    +
    \frac{(1 - \frac{h(n)}{n})^{-1/2}}
    {\sqrt{n}
      \cdot
      \sqrt{\pi}}
    \sum_{\l = 0}^{h(n)}
    \frac{\tau^\l}{\l!}
    +
    \sum_{\l = h(n) + 1}^n
    \frac{\tau^\l}{\l!}
    \cdot
    \frac{1}{\sqrt{n - \l}},
  \end{align*}
  where $h: \dN \to \dN$ is a function satisfying
  $h(n) = \omega_n\bigl( 1 \bigr)$ and $h(n) = o(n)$. In particular, there exists an $N \in \dN$ such that $\forall\ n \geq N$, it holds that $1 < h(n) < n$. Then notice that
  \begin{align}
    0
     & \leq
    \sum_{\l = h(n) + 1}^{n - 1}
    \frac{\tau^\l}
    {\l!}
    \cdot
    \frac{\sqrt{n}}{\sqrt{n - \l}} \nonumber \\
     & \overset{\mathrm{(a)}}{\leq}
    \sum_{\l = h(n) + 1}^{n - 1}
    \tau^\l
    \cdot
    \frac{\sqrt{n}}{\sqrt{n - \l}}
    \cdot
    \frac{1}{\sqrt{2\pi\l}}
    \left( \frac{\e}{\l} \right)^\l
    \e^{-1/(12\l + 1)} \nonumber \\
     & \leq
    \frac{1}{\sqrt{2\pi}}
    \sum_{\l = h(n) + 1}^{n - 1}
    \frac{\sqrt{n}}{\sqrt{\l(n - \l)}}
    \left(\frac{\tau\e}{\l}\right)^\l
    \e^{-0} \nonumber \\
     & \overset{\mathrm{(b)}}{\leq}
    \frac{1}{\sqrt{2\pi}}
    \sum_{\l = h(n) + 1}^{n - 1}
    \frac{\sqrt{n}}{\sqrt{(n - 1)(n - (n - 1))}}
    \left(\frac{\tau\e}{\l}\right)^\l \nonumber \\
     & \leq
    \frac{1}{\sqrt{2\pi}}
    \left( 1 - \frac{1}{n} \right)^{-1/2}
    \sum_{\l = h(n) + 1}^{n - 1}
    \left(\frac{\tau\e}{h(n)}\right)^\l \nonumber \\
     & =
    \frac{1}{\sqrt{2\pi}}
    \left( 1 - \frac{1}{n} \right)^{-1/2}
    \frac{
      \left( \frac{\tau\e}{h(n)} \right)^{h(n) + 1}
      -
      \left( \frac{\tau\e}{h(n)} \right)^n}
    {1 - \frac{\tau\e}{h(n)}}
    , \label{eq:Zn-ub}
  \end{align}
  where step~$\mathrm{(a)}$ is due to Robbins' approximations for factorial
  function \cite{10.2307/2308012}, and where step~$\mathrm{(b)}$ is due to the
  observation that $x = n - 1$ is a global minimum for the polynomial function
  $x \mapsto x \cdot (n \! - \! x)$ on $[n \! - \! 1]$.

  Note that the limit of the right-hand side of \eqref{eq:Zn-ub} equals $0$ as
  $n \to \infty$ since $\frac{\tau\e}{h(n)} < 1$ for sufficiently large $n$,
  which is guaranteed by $h(n) = \omega_n(1)$. So, we obtain
  \begin{align*}
     & \hspace{-0.35cm}
    \lim_{n \to \infty} Z_n \cdot \sqrt{n} \\
     & \leq
    \lim_{n \to \infty}
    \biggl(
    \frac{\sqrt{n} \cdot \tau^n}{n!}
    +
    \frac{(1 - \frac{h(n)}{n})^{-1/2}}
      {\sqrt{\pi}}
    \sum_{\l = 0}^{h(n)}
    \frac{\tau^\l}
    {\l!}                                  \\
     & \hspace{0.45\columnwidth}
    +
    \sum_{\l = h(n) + 1}^n
    \frac{\tau^\l}{\l!}
    \cdot
    \frac{\sqrt{n}}{\sqrt{n - \l}}
    \biggl)                                \\
     & = 0
    +
    \frac{(1 - 0)^{-1/2}}{\sqrt{\pi}}
    \cdot
    \exp\left( \frac{1}{2} \right)
    +
    0 = \sqrt{\frac{e}{\pi}}.
  \end{align*}
\end{proof}

\begin{proposition}
  It holds that
  \begin{align*}
    Z_n
     & \sim
    C
    \cdot
    \frac{1}{\sqrt{n}},
  \end{align*}
  where $C$ was defined in~\eqref{eq:def:const:Cl:Cu:C}.
\end{proposition}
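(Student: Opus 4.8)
The plan is that this proposition follows immediately from the two preceding lemmas, so there is essentially no further work to do. Concretely, I would argue as follows. Lemma~\ref{lemma:Zn-asym-lb} states that $\lim_{n\to\infty} Z_n\cdot\sqrt{n}\geq C$ and Lemma~\ref{lemma:Zn-asym-ub} states that $\lim_{n\to\infty} Z_n\cdot\sqrt{n}\leq C$; reading these (as one should) as statements about $\liminf$ and $\limsup$, respectively, we obtain the chain $C \leq \liminf_{n\to\infty} Z_n\sqrt{n} \leq \limsup_{n\to\infty} Z_n\sqrt{n} \leq C$. Since the left and right ends agree, all the quantities in between are equal to $C$, so in particular $\lim_{n\to\infty} Z_n\sqrt{n}$ exists and equals $C$. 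By the definition of the asymptotic-equivalence notation $a(n)\sim b(n)$ (namely $\lim_{n\to\infty} a(n)/b(n) = 1$), this is exactly the assertion $Z_n \sim C/\sqrt{n}$, with $C = \sqrt{e/\pi}$ as in~\eqref{eq:def:const:Cl:Cu:C}.

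The real content of the argument has already been packaged into Lemmas~\ref{lemma:Zn-asym-lb} and~\ref{lemma:Zn-asym-ub}, both of which rest on the closed-form generating function $C(t) = \e^{t/2}(1-t)^{-1/2}$, the resulting Cauchy-product expansion $Z_n = \sum_{\ell=0}^{n}\frac{\tau^\ell}{\ell!}\cdot 4^{-(n-\ell)}\binom{2(n-\ell)}{n-\ell}$ with $\tau = \frac{1}{2}$, and the central-binomial-coefficient bounds in~\eqref{eq:cbc-approx}. The lower bound is the easy direction: substituting the left inequality of~\eqref{eq:cbc-approx} into the Cauchy product term by term already produces, after dividing by $\sqrt{n}$, a sum converging to $\e^{\tau}/\sqrt{\pi} = \sqrt{e/\pi}$. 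The delicate step — and the only place where any care is needed — is the matching upper bound, where one splits the sum at a threshold $h(n)$ with $h(n)\to\infty$ and $h(n)=o(n)$, bounds the head by replacing $\sqrt{n-\ell}$ with $\sqrt{n-h(n)}$ (which costs only a factor $(1-h(n)/n)^{-1/2}\to 1$), and shows the tail is negligible using Robbins' factorial estimates together with $\tau\e/h(n)<1$ for $n$ large. Once those two one-sided limit statements are in place, the present proposition requires nothing beyond the squeeze argument above; I do not anticipate any obstacle in this final step.
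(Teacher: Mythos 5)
Your proposal is correct and is essentially identical to the paper's proof: the paper also deduces the proposition directly from Lemmas~\ref{lemma:Zn-asym-lb} and~\ref{lemma:Zn-asym-ub}, the matching one-sided bounds $\lim_{n\to\infty} Z_n\sqrt{n} \geq C$ and $\lim_{n\to\infty} Z_n\sqrt{n} \leq C$, with all the substantive work already contained in those two lemmas. Your reading of the two statements as bounds on the $\liminf$ and $\limsup$, combined by a squeeze argument, is exactly the intended (and only needed) step.
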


\begin{proof}
  This follows directly from \Cref{lemma:Zn-asym-lb,lemma:Zn-asym-ub}.
\end{proof}

\section{Proof of Theorem~\ref{theorem:iid:matrices:1}}
\label{sec:proof:theorem:iid:matrices:1}

Given Assumption~\ref{ass:iid:matrices:1}, we are interested in the
expectation value of $\perm(\matr{A})$, $\perm(\matr{A})^2$, and
$\perm_{\Bethe,2}(\matr{A})^2$.

The expectation value of the permanent of $\matr{A}$ is given by\footnote{Note
  that we actually do not need this result in the following.}
\begin{align*}
  \expvalbig{\perm(\matr{A})}
   & = \expval{
    \sum_{\sig \in \Sn}
    \prod_{i \in [n]}
    a_{i, \sig(i)}
  }                            \\
   & \overset{\mathrm{(a)}}{=}
  \sum_{\sig \in \Sn}
  \expval{
    \prod_{i \in [n]}
    a_{i, \sig(i)}
  }                            \\
   & \overset{\mathrm{(b)}}{=}
  \sum_{\sig \in \Sn}
  \prod_{i \in [n]}
  \expval{a_{i, \sig(i)}}      \\
   & = n! \cdot \mu_1^n,
\end{align*}
where step~$\mathrm{(a)}$ is due to the linearity of the expectation value, and where step~$\mathrm{(b)}$ is due to the independence between the entries of $\matr{A}$. Similarly, we obtain
\begin{align*}
  \expvalbig{\perm(\matr{A})^2}
   & = \sum_{\sig_1, \sig_2 \in \Sn}
  \prod_{i \in [n]}
  \expval{a_{i, \sig_1(i)} \cdot a_{i, \sig_2(i)}} \\
   & = \sum_{\sig_1, \sig_2 \in \Sn}
  \prod_{i \in [n]}
  h_i(\sig_1,\sig_2)
\end{align*}
where
\begin{align*}
  h_i(\sig_1,\sig_2)
   & \defeq
  \expval{a_{i, \sig_1(i)} \cdot a_{i, \sig_2(i)}}
  = \begin{cases}
    \mu_2   & \text{if} \ \sig_1(i) = \sig_2(i) \\
    \mu_1^2 & \text{otherwise}
  \end{cases}.
\end{align*}
Because
\begin{align*}
  h_i(\sig_1,\sig_2)
    &= h_i(\sig_1 \circ \sig_2^{-1}, \sig_2 \circ \sig_2^{-1}) 
     = h_i( \sig_1\circ \sig_2^{-1}, \mathrm{id}),
\end{align*}
where $\mathrm{id}$ is the identity permutation, we obtain
\begin{align*}
  \expvalbig{\perm(\matr{A})^2}
   & = \sum_{\sig_1, \sig_2 \in \Sn}
  \prod_{i \in [n]}
  h_i(\sig_1,\sig_2)                        \\
   & = \sum_{\sig_1, \sig_2 \in \Sn}
  \prod_{i \in [n]}
  h_i(\sig_1 \circ \sig_2^{-1},\mathrm{id}) \\
   & = n!
  \cdot
  \sum_{\sig \in \Sn}
  \prod_{i \in [n]}
  h_i(\sig,\mathrm{id}),
\end{align*}
where
\begin{align*}
  h_i(\sig,\mathrm{id})
   & = \begin{cases}
    \mu_2   & \text{if} \ \sig(i) = i \\
    \mu_1^2 & \text{otherwise}
  \end{cases}.
\end{align*}
Note that every cycle of length $1$ of $\sig$ contributes a factor of $\mu_2$
to $\prod_{i \in [n]} h_i(\sig,\mathrm{id})$. Similarly, every cycle of length
$k$, $k \geq 2$, contributes a factor of $(\mu_1^2)^k$ to
$\prod_{i \in [n]} h_i(\sig,\mathrm{id})$. Therefore, using the cycle index of
$\Sn$, we can write
\begin{align*}
  \expvalbig{\perm(\matr{A})^2}
   & = Z(\Sn),
\end{align*}
where
\begin{align*}
  z_k
   & = \begin{cases}
    \mu_2       & \text{if $k = 1$}    \\
    (\mu_1^2)^k & \text{if $k \geq 2$}
  \end{cases}.
\end{align*}
Applying the generation function technique as in the proofs of
\Cref{lemma:Zn-asym-lb,lemma:Zn-asym-ub}, we obtain
\begin{align}
  \expvalbig{\perm(\matr{A})^2}
   & = (n!)^2
  \cdot
  \sum_{\l = 0}^n
  \frac{(\mu_2 - \mu_1^2)^\l
  \mu_1^{2(n - \l)}}{\l!} \nonumber \\
   & = (n!)^2
  \cdot
  \mu_1^{2n}
  \cdot
  \sum_{\l = 0}^n
  \frac{(\frac{\mu_2}{\mu_1^2} - 1)^\l}
  {\l!} \nonumber                   \\
   & \sim
  (n!)^2
  \cdot
  \mu_1^{2n}
  \cdot
  \exp
  \left(
  \frac{\mu_2}{\mu_1^2} - 1
  \right).
  \label{eq:expval-perm-square}
\end{align}
Similarly, one obtains
\begin{align}
  \label{eq:expval-b2-perm-square}
   & \hspace{-0.5cm}
  \expvalbig{\perm_{\Bethe, 2}(\matr{A})^2} \nonumber \\
   & = (n!)^2
  \cdot
  \mu_1^{2n}
  \cdot
  \sum_{\l = 0}^n
  \frac{(\frac{\mu_2}{\mu_1^2} - \frac{1}{2})^\l}
  {\l!}
  \cdot
  \frac{(2(n - \l))!}{4^{n - \l} \cdot ((n - \l)!)^2}.
\end{align}
By carefully reusing some calculations in the proofs of
\Cref{lemma:Zn-asym-lb,lemma:Zn-asym-ub}, the summation appearing in
\eqref{eq:expval-b2-perm-square} can then be bounded between
\begin{align*}
  \frac{\mu_1^{2n}}{\sqrt{(n + \frac{1}{2}) \pi}}
  \cdot
  \sum_{\l = 0}^n
  \frac{\tau^\l}{\l!}
\end{align*}
and
\begin{align*}
  \mu_1^{2n}
  \left(
    \frac{\tau^n}{n!}
    \! + \!
    \frac{(1 - \frac{h(n)}{n})^{-1/2}}{\sqrt{n \pi}}
    \cdot
    \sum_{\l = 0}^{h(n)}
    \frac{\tau^\l}{\l!}
    + \!\!\!\!
    \sum_{\l = h(n) + 1}^{n - 1} \!
    \frac{\tau^\l}{\l!}
    \cdot
    \frac{1}{\sqrt{n - \l}}
    \right) \! ,
\end{align*}
where $\tau \defeq \frac{\mu_2}{\mu_1^2} - \frac{1}{2}$. Finally, we obtain
\begin{align}
  \frac{\expvalbig{\perm_{\Bethe, 2}(\matr{A})^2}}
  {(n!)^2}
   & \sim
  \frac{\e^{\tau}}{\sqrt{\pi}}
  \cdot
  \frac{\mu_1^{2n}}{\sqrt{n}}.
  \label{eq:expval-b2-perm-square:2}
\end{align}
Combining~\eqref{eq:expval-perm-square}
and~\eqref{eq:expval-b2-perm-square:2}, we get
\begin{align*}
  \frac{\expvalbig{\perm(\matr{A})^2}}
  {\expvalbig{\perm_{\Bethe, 2}(\matr{A})^2}}
   &\sim
      \frac
      {
        (n!)^2
        \cdot
        \mu_1^{2n}
        \cdot
        \exp
        \left(
          \frac{\mu_2}{\mu_1^2} - 1
        \right)
      }
      {
        (n!)^2
        \cdot
        \frac{\e^{\tau}}{\sqrt{\pi}}
        \cdot
        \frac{\mu_1^{2n}}{\sqrt{n}}
      } \\
   &= \sqrt{\pi n}
      \cdot
      \exp
        \left(
          \tau - \frac{1}{2} - \tau
        \right) \\
   &= \sqrt{\frac{\pi n}{\e}},
\end{align*}
which, surprisingly, does not depend on the chosen distribution!

We remark that the results in
Appendix~\ref{sec:proof:theorem:all:one:matrix:1} can be regarded as a special
case of the results in this section. Indeed, if the entries of $\matr{A}$ are
chosen i.i.d.\ according to the (degenerate) distribution that assigns
$a_{i,j} = 1$ with probability $1$, then $\mu_1 = 1$ and $\mu_2 = 1$, and with
that the right-hand side of~ \eqref{eq:expval-b2-perm-square:2} simplifies to
$\sqrt{\frac{\e}{\pi}} \cdot \frac{1}{\sqrt{n}}$.

\section{Unification of the results in
  Appendices~\ref{sec:proof:theorem:all:one:matrix:1}
  and~\ref{sec:proof:theorem:iid:matrices:1}}
\label{sec:unification:1}

In this appendix, we show that the results in
Appendices~\ref{sec:proof:theorem:all:one:matrix:1}
and~\ref{sec:proof:theorem:iid:matrices:1} can be obtained as special cases of
a more general result.

Recall that for $n \in \dN$, we defined $[n] \defeq \{ 1, 2, \ldots, n
\}$. More generally, for $n_1, n_2 \in \dZ$ such that $n_1 \leq n_2$, we
define $\intset{n_1}{n_2} \defeq \{ n_1, n_1 \! + \! 1, \ldots, n_2 \}$.

\begin{definition}
  \label{def:psi-func}

  Let $n \in \dN$. We define the function
  \begin{alignat*}{3}
    \Psi_n : \
     & \dRP^3            &  & \to     &  & \ \dRNN, \\
     & (\ta_1, \ta_2, \ta_3) &  & \mapsto &  & \
    (n!)^2
    \! \cdot \!
    \ta_2^n
    \! \cdot \!
    \sum_{\l = 0}^n
    \binom{n \! - \! \l \! + \! \ta_3 \! - \! 1}{n \! - \! \l}
    \! \cdot \!
    \frac{(\frac{\ta_1}{\ta_2} \! - \! \ta_3)^\l}{\l!},
  \end{alignat*}
  where the binomial coefficient here is the generalized binomial coefficient,
  which, for $\alpha \in \dR$ and $k \in \dN \cup \{ 0 \}$, is defined to be
  \begin{align*}
    \binom{\alpha}{k}
     & \defeq
    \frac{\alpha \cdot (\alpha - 1) \cdots (\alpha - k + 1)}
    {k!}.
  \end{align*}
\end{definition}

\begin{lemma}
  \label{lemma:psi-func-prop}

  For all $n \in \dN$ it holds that \\[-0.25cm]
  \begin{enumerate}

  \item
    $\Psi_n(\ta_1, \ta_2, \ta_3) \! = \! n! \cdot B_n(0! {\cdot} \ta_1, 1!
    {\cdot} \ta_3\ta_2^2, \ldots, (n \! - \! 1)! {\cdot} \ta_3\ta_2^n)$,
    \\

  \item
    $\Psi_n(\ta_1, \ta_2, \ta_3) \sim (n!)^2 \cdot \ta_2^n \cdot
    \dps\frac{n^{\ta_3 - 1}}{\Ga(\ta_3)} \cdot \exp\left(\frac{\ta_1}{\ta_2} -
      \ta_3\right)$, \\

  \end{enumerate}
  where Item~1 holds for $\ta_3 \in \dR_{>0}$, Item~2 holds for
  $\ta_3 \in \ocintvl{0}{1}$ (possibly for $\ta_3 \in \dR_{>0}$, but we did
  not need this generalization), and where $\Ga(\cdot)$ denotes the Gamma
  function.
\end{lemma}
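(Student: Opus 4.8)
The plan is to prove Item~1 by a direct generating-function manipulation and Item~2 by a head/tail split that mirrors the proofs of Lemmas~\ref{lemma:Zn-asym-lb} and~\ref{lemma:Zn-asym-ub}.

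\emph{Item~1.} Put $z_1 \defeq \ta_1$ and $z_k \defeq \ta_3 \ta_2^k$ for $k \geq 2$, so that $(k-1)! \cdot z_k$ is exactly the $k$-th argument of the Bell polynomial in the statement. I would invoke the standard exponential generating function of the complete exponential Bell polynomials, $\sum_{n \geq 0} B_n(x_1, \dots, x_n)\, t^n / n! = \exp\bigl( \sum_{k \geq 1} x_k t^k / k! \bigr)$ (see~\cite{Comtet1974}), with $x_k = (k-1)! \cdot z_k$. The exponent then telescopes via the logarithm series,
\begin{align*}
  \sum_{k \geq 1} (k-1)!\, z_k \, \frac{t^k}{k!}
    &= \ta_1 t + \ta_3 \sum_{k \geq 2} \frac{(\ta_2 t)^k}{k}
     = (\ta_1 - \ta_3 \ta_2)\, t - \ta_3 \ln(1 - \ta_2 t),
\end{align*}
so the generating function equals $\e^{(\ta_1 - \ta_3 \ta_2) t} (1 - \ta_2 t)^{-\ta_3}$. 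Multiplying the power series $\e^{(\ta_1 - \ta_3 \ta_2) t} = \sum_{\l \geq 0} \frac{(\ta_1 - \ta_3 \ta_2)^\l}{\l!} t^\l$ and $(1 - \ta_2 t)^{-\ta_3} = \sum_{m \geq 0} \binom{m + \ta_3 - 1}{m} \ta_2^m t^m$ (generalized binomial theorem) and reading off the coefficient of $t^n$ gives $B_n(0! \cdot \ta_1, 1! \cdot \ta_3 \ta_2^2, \dots, (n-1)! \cdot \ta_3 \ta_2^n) = n! \sum_{\l = 0}^n \binom{n - \l + \ta_3 - 1}{n - \l} \frac{(\ta_1 - \ta_3 \ta_2)^\l \ta_2^{n - \l}}{\l!}$; multiplying by $n!$ and using $(\ta_1 - \ta_3 \ta_2)^\l \ta_2^{n - \l} = \ta_2^n (\ta_1 / \ta_2 - \ta_3)^\l$ yields exactly $\Psi_n(\ta_1, \ta_2, \ta_3)$. (Equivalently, $\e^{(\ta_1 - \ta_3 \ta_2) t}(1 - \ta_2 t)^{-\ta_3}$ is the cycle-index generating function $C(t)$ for this choice of $z_k$, and one applies the relation $Z(\Sn) = \frac{1}{n!} B_n(0! z_1, \dots, (n-1)! z_n)$ recalled in Appendix~\ref{sec:proof:theorem:all:one:matrix:1}.)

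\emph{Item~2.} By the $\l$-summation formula just obtained it suffices to prove
\begin{align*}
  \sum_{\l = 0}^n \binom{n - \l + \ta_3 - 1}{n - \l} \frac{\rho^\l}{\l!}
    &\sim \frac{n^{\ta_3 - 1}}{\Ga(\ta_3)}\, \e^{\rho},
    \qquad \rho \defeq \frac{\ta_1}{\ta_2} - \ta_3,
\end{align*}
since $\Psi_n = (n!)^2 \ta_2^n$ times this left-hand side. Write $b_m \defeq \binom{m + \ta_3 - 1}{m} = \Ga(m + \ta_3) / \bigl( \Ga(\ta_3) \Ga(m + 1) \bigr)$. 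Two facts drive the estimate: for $\ta_3 \in \ocintvl{0}{1}$ one has $b_{m+1} / b_m = (\ta_3 + m)/(m + 1) \leq 1$, hence $0 < b_m \leq b_0 = 1$ for all $m$; and by the standard asymptotics $\Ga(m + \ta_3)/\Ga(m + 1) \sim m^{\ta_3 - 1}$, so $b_m \sim m^{\ta_3 - 1} / \Ga(\ta_3)$ as $m \to \infty$. I would split the sum at $h(n) \defeq \lfloor \sqrt{n} \rfloor$ (any $h$ with $h(n) = \omega_n(1)$ and $h(n) = o(n)$ works, as in the cited lemmas). On the head $\l \leq h(n)$: since $n - \l \geq n - h(n) \to \infty$, the asymptotics of $b_{n-\l}$ hold uniformly, and $(n - \l)^{\ta_3 - 1} = n^{\ta_3 - 1}(1 - \l/n)^{\ta_3 - 1} = n^{\ta_3 - 1}(1 + o(1))$ uniformly in $\l$ because $h(n)/n \to 0$; together with $\sum_{\l = 0}^{h(n)} \rho^\l / \l! \to \e^{\rho}$ this contributes $\frac{n^{\ta_3 - 1}}{\Ga(\ta_3)} \e^{\rho} (1 + o(1))$. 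On the tail $h(n) < \l \leq n$: the bound $0 < b_{n - \l} \leq 1$ gives $\bigl| \sum_{\l > h(n)} b_{n - \l} \rho^\l / \l! \bigr| \leq \sum_{\l > h(n)} |\rho|^\l / \l!$, a tail of the exponential series, which is super-exponentially small in $h(n)$ (for $n$ large it is at most $2 |\rho|^{h(n)+1} / (h(n)+1)! \leq 2^{-h(n)}$) and hence $o(n^{\ta_3 - 1})$ (note $n^{\ta_3 - 1} \geq n^{-1}$ for $\ta_3 > 0$). Adding the two pieces gives the displayed asymptotic, and substituting $\rho = \ta_1/\ta_2 - \ta_3$ gives Item~2.

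The routine part is Item~1, which is bookkeeping with power series. The delicate point is the tail estimate in Item~2: because $\ta_3 \leq 1$ forces $n^{\ta_3 - 1} \to 0$, it is not enough that the tail be ``small'' --- it must be shown to vanish strictly faster than $n^{\ta_3 - 1}$, which is exactly where the super-exponential decay of the Taylor tail and the growth $h(n) \to \infty$ are used; likewise the uniformity of $b_{n - \l} \sim n^{\ta_3 - 1}/\Ga(\ta_3)$ over $\l \leq h(n)$ must be spelled out. I would also note that the argument for Item~2 parallels Lemmas~\ref{lemma:Zn-asym-lb}/\ref{lemma:Zn-asym-ub} with $\binom{2m}{m}/4^m$ --- the instance $\ta_3 = \tfrac{1}{2}$, as $\Ga(\tfrac{1}{2}) = \sqrt{\pi}$ --- replaced throughout by $b_m$, and that, alternatively, Item~2 follows at once from singularity analysis applied to $[t^n]\, \e^{\rho t} (1 - t)^{-\ta_3}$, whose unique dominant singularity is at $t = 1$.
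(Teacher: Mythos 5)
Your proposal is correct, and it follows the paper's architecture for Item~1 exactly (substitute $z_1 = \ta_1$, $z_k = \ta_3\ta_2^k$ into the cycle-index/Bell generating function, telescope the logarithm series to get $\e^{(\ta_1 - \ta_3\ta_2)t}(1-\ta_2 t)^{-\ta_3}$, expand by the generalized binomial series, and extract $[t^n]$). For Item~2 the skeleton is also the same --- rewrite $\binom{n-\l+\ta_3-1}{n-\l}$ as $\Ga(n-\l+\ta_3)/\bigl(\Ga(\ta_3)\Ga(n-\l+1)\bigr)$, split the sum at a threshold $h(n)$ with $h(n)=\omega_n(1)$ and $h(n)=o(n)$, and let the head produce $\frac{n^{\ta_3-1}}{\Ga(\ta_3)}\e^{\rho}$ --- but your tail estimate is genuinely different and noticeably simpler. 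The paper bounds each tail term by $(n-\l)^{\ta_3-1}\cdot\frac{\tau^\l}{\l!}$ (via Wendel's inequality with $1/\Ga(\ta_3)\le 1$), which after normalizing by $n^{\ta_3-1}$ leaves factors $n^{1-\ta_3}/(n-\l)^{1-\ta_3}$ that blow up near $\l = n$; killing these forces the paper into Robbins' factorial approximation, a geometric-series bound in $\bigl(\tau\e/h(n)\bigr)^{\l}$, and a case split between $\ta_3\in\ccintvl{\tfrac{1}{2}}{1}$ and $\ta_3\in\oointvl{0}{\tfrac{1}{2}}$. You instead observe that $b_m \defeq \binom{m+\ta_3-1}{m}$ satisfies $0 < b_m \le b_0 = 1$ for $\ta_3\in\ocintvl{0}{1}$ (monotone ratio $(\ta_3+m)/(m+1)\le 1$), so the whole tail is dominated by the exponential-series remainder $\sum_{\l>h(n)}|\rho|^\l/\l! \le 2^{-h(n)}$ for large $n$, which is $o(n^{-1})\subseteq o(n^{\ta_3-1})$; this sidesteps the blow-up, the Stirling machinery, and the case split in one stroke. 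The only caveats worth recording are the ones you already flag: the uniformity of $b_{n-\l}\sim n^{\ta_3-1}/\Ga(\ta_3)$ over $\l\le h(n)$ (which follows from the same Wendel/Stirling ratio bounds the paper uses), and handling possibly negative $\rho$ by working with an additive error $\sup_{\l\le h(n)}|\epsilon_{n,\l}|\cdot\e^{|\rho|}\to 0$ rather than factoring $(1+o(1))$ out of a possibly cancelling sum. Both your argument and the paper's are confined to $\ta_3\in\ocintvl{0}{1}$ (your monotonicity bound $b_m\le 1$ fails for $\ta_3>1$, just as the paper's $1/\Ga(\ta_3)\le 1$ step does), so neither buys the extension to all $\ta_3>0$ mentioned parenthetically in the statement.
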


\begin{proof}
  Item~$1$ follows from analyzing $C(t)$, the generating function of the
  cycle index of the symmetric group, with the substitution
  \begin{align*}
    z_k
     &= \begin{cases}
      \ta_1         & \text{if $k = 1$}    \\
      \ta_3 \ta_2^k & \text{if $k \geq 2$}
    \end{cases}.
  \end{align*}
  In the following, we will use the binomial series, i.e.,
  \begin{align*}
    (1 + t)^\al = \sum_{k = 0}^\infty \binom{\al}{k} t^k,
  \end{align*}
  which is convergent for any $\al \in \dR$ and $t \in \oointvl{-1}{1}$. (Note
  that the binomial coefficient here is the generalized binomial coefficient.)

  Adapting some calculations in the proof of \Cref{lemma:Zn-asym-lb}, we obtain
  \begin{align*}
    C(t)
     & = \exp\left(\ta_1t + \sum_{k = 2}^\infty \frac{\ta_3\ta_2^k}{k} t^k\right) \\
     & = \exp(\ta_1t + \ta_3(-\ln(1 - \ta_2t) - \ta_2t))                          \\
     & = \e^{(\ta_1 - \ta_3\ta_2)t} (1 - \ta_2t)^{-\ta_3}                             \\
     & = \left(\sum_{k = 0}^\infty \frac{\bigl( (\ta_1 - \ta_3\ta_2) t \bigr)^k}{k!}\right)
    \cdot
    \left(\sum_{k = 0}^\infty \binom{-\ta_3}{k}(-\ta_2t)^k\right)                 \\
     & = \left(\sum_{k = 0}^\infty \frac{(\ta_1 - \ta_3\ta_2)^k}{k!} t^k\right)
    \cdot
    \left(\sum_{k = 0}^\infty \ta_2^k\binom{k + \ta_3 - 1}{k} t^k\right)          \\
     & = \sum_{n = 0}^\infty
    \sum_{\l = 0}^n
    \frac{\ta_2^{n - \l}(\ta_1 - \ta_3\ta_2)^\l}{\l!}
    \cdot
    \binom{n - \l + \ta_3 - 1}{n - \l}
    \cdot
    t^n.
  \end{align*}
  Then we can verify that
  \begin{align*}
     & \hspace{-0.5cm}
    n! \cdot B_n(0! \cdot \ta_1, 1! \cdot \ta_3\ta_2^2, \ldots, (n - 1)!
    \cdot
    \ta_3\ta_2^n)                                \\
     & = (n!)^2 \ta_2^n \sum_{\l = 0}^n
    \binom{n - \l + \ta_3 - 1}{n - \l}
    \cdot
    \frac{(\frac{\ta_1}{\ta_2} - \ta_3)^\l}{\l!} \\
     & = \Psi_n(\ta_1, \ta_2, \ta_3).
  \end{align*}

  For Item $2$, we first notice that for any $n \in \dN$,
  $\l \in \intset{0}{n \! - \! 1}$, $\ta_3 \in \ocintvl{0}{1}$, we have
  \begin{align*}
     & \hspace{-0.25cm}
    \binom{n - \l + \ta_3 - 1}{n - \l}                          \\
     & = \frac{\Ga((n - \l + \ta_3 - 1) + 1)}
    {\Ga((n - \l) + 1)\Ga((n - \l + \ta_3 - 1) - (n - \l) + 1)} \\
     & = \frac{\Ga(n - \l + \ta_3)}{\Ga(n - \l + 1)\Ga(\ta_3)}.
  \end{align*}
  In the following, we will need Wendel's double inequalities for the ratio of
  two Gamma function values~\cite{10.2307/2304460}:
  \begin{align}
    \label{eq:wendel-dbl-ineq}
    \left(\frac{x}{x + s}\right)^{1 - s}
     & \leq
    \frac{\Ga(x + s)}{x^s \Ga(x)}
    \leq 1,
  \end{align}
  where $x > 0 \in \dR$, $s \in \ccintvl{0}{1}$. This double inequalities can
  be rewritten as
  \begin{align}
    \label{eq:wendel-dbl-ineq:variant}
    \left( \frac{x}{x + s} \right)^{1 - s}
     & \leq
    \frac{\Ga(x + s)}{x^{s - 1}\Ga(x + 1)}
    \leq 1,
  \end{align}
  or as
  \begin{align}
    \label{eq:wendel-dbl-ineq:variant:2}
    (x + s)^{s - 1}
     & \leq
    \frac{\Ga(x + s)}{\Ga(x + 1)}
    \leq
    x^{s - 1}.
  \end{align}
  Notice that \eqref{eq:wendel-dbl-ineq:variant} is due to the property
  $\Ga(x + 1) = x \cdot \Ga(x)$ of the Gamma function. Substituting
  $x = n - \l$ and $s = c$ into~\eqref{eq:wendel-dbl-ineq:variant:2}, we obtain
  \begin{align*}
    (n - \l + \ta_3)^{\ta_3 - 1}
     & \leq
    \frac{\Ga(n - \l + \ta_3)}{\Ga(n - \l + 1)}
    \leq
    (n - \l)^{\ta_3 - 1}.
  \end{align*}
  Moreover,
  \begin{align*}
    \frac{(n - n + \ta_3)^{\ta_3 - 1}}{\Ga(\ta_3)}
     & = \frac{\Ga(1)}{\ta_3^{1 - \ta_3}\Ga(\ta_3)}
    = \frac{\Ga(c + (1 - \ta_3))}{\ta_3^{1 - \ta_3}\Ga(\ta_3)}
    \leq
    1,
  \end{align*}
  where the inequality is obtained from substituting $x = \ta_3$ and
  $s = 1 - \ta_3$ into \eqref{eq:wendel-dbl-ineq}. Observe that, because $\Ga$
  is decreasing in $\ocintvl{0}{1}$ and because $\Ga(1) = 1$, it holds that
  $\frac{1}{\Ga(\ta_3)} \leq \frac{1}{\Ga(1)} = 1$. Then, letting
  $\tau \defeq \frac{\ta_1}{\ta_2} - \ta_3$, one obtains the expression
  in~\eqref{eq:Psi:n:1} (see top of the next page) and the expression
  in~\eqref{eq:Psi:n:2} (see top of the next page), where $h: \dN \to \dN$ is
  a function satisfying $h(n) = \omega_n\bigl( 1 \bigr)$ and $h(n) = o(n)$. In
  summary, we obtain the expression in~\eqref{eq:Psi:n:3} (see top of the next
  page).

  Finally, by using parts of the proof of \Cref{lemma:Zn-asym-ub}, we obtain
  \begin{align*}
    &\hspace{-0.35cm}
     \sum_{\l = h(n) + 1}^{n - 1}
       \frac{n^{1 - \ta_3}}{(n - \l)^{1 - \ta_3}}
       \cdot
       \frac{\tau^\l}{\l!} \\
    &\leq
       \frac{1}{\sqrt{2\pi}}
       \sum_{\l = h(n) + 1}^{n - 1}
         \frac{n^{1 - \ta_3}}{(n - \l)^{1 - \ta_3}}
         \cdot
         \frac{1}{\sqrt{\l}}
         \left( \frac{\tau\e}{\l} \right)^\l
         \cdot
         \e^{-0} \\
    &\leq
       \frac{1}{\sqrt{2\pi}}
       \sum_{\l = h(n) + 1}^{n - 1}
         \left( \frac{n}{\l(n - \l)} \right)^{1 - \ta_3}
         \cdot
         \left( \frac{\tau\e}{h(n)} \right)^\l \\
    &\leq
        \frac{1}{\sqrt{2\pi}}
        \cdot
        \left( 1 - \frac{1}{n} \right)^{-(1 - \ta_3)}
        \cdot
        \frac{
          \left( \frac{\tau\e}{h(n)} \right)^{h(n) + 1}
          -
          \left( \frac{\tau\e}{h(n)} \right)^n
        }
        {1 - \frac{\tau\e}{h(n)}}
  \end{align*}
  for any $\ta_3 \in \ccintvl{\frac{1}{2}}{1}$ since
  $\l^{-1/2} \leq \l^{-(1 - \ta_3)}$ and
  \begin{pbalign*}
    &\hspace{-0.35cm}
     \sum_{\l = h(n) + 1}^{n - 1}
       \frac{n^{1 - \ta_3}}{(n - \l)^{1 - \ta_3}}
       \cdot
       \frac{\tau^\l}{\l!} \\
    &\leq
       \frac{1}{\sqrt{2\pi}}
       \sum_{\l = h(n) + 1}^{n - 1}
         \frac{n^{1 - \ta_3}}{(n - \l)^{1 - \ta_3}}
         \cdot
         \frac{1}{\sqrt{\l}}
        \left( \frac{\tau\e}{\l} \right)^\l \\
    &= \frac{1}{\sqrt{2\pi}}
         \sum_{\l = h(n) + 1}^{n - 1}
           \left( \frac{n}{\l(n - \l)} \right)^{1 - \ta_3}
           \cdot
           \frac{\tau\e}{\l^{1/2 + \ta_3}}
           \left( \frac{\tau\e}{\l} \right)^{\l - 1} \\
    &\leq
       \frac{\tau\e}{\sqrt{2\pi}}
       \cdot
       \left( 1 - \frac{1}{n} \right)^{-(1 - \ta_3)}
       \frac{
         \left( \frac{\tau\e}{h(n)} \right)^{h(n)}
         -
         \left( \frac{\tau\e}{h(n)} \right)^{n - 1}
       }
       {1 - \frac{\tau\e}{h(n)}}
  \end{pbalign*}
  for any $\ta_3 \in \oointvl{0}{\frac{1}{2}}$.
  The desired result is then obtained by letting $n \to \infty$ and
  applying the sandwich theorem.
\end{proof}

\begin{figure*}[t]
  \begin{align}
    \Psi_n(\ta_1, \ta_2, \ta_3)
     & = (n!)^2
    \cdot
    \ta_2^n
    \cdot
    \frac{\tau^n}{n!}
    +
    (n!)^2
    \cdot
    \ta_2^n
    \cdot
    \sum_{\l = 0}^{n - 1}
    \frac{\Ga(n - \l + \ta_3)}
    {\Ga(n - \l + 1)\Ga(\ta_3)}
    \cdot
    \frac{\tau^\l}{\l!}
    \nonumber                     \\
     & \geq
    (n!)^2
    \cdot
    \ta_2^n
    \cdot
    \frac{\tau^n}{n!}
    \cdot
    \frac{(n - n + \ta_3)^{\ta_3 - 1}}
    {\Ga(\ta_3)}
    +
    (n!)^2
    \cdot
    \ta_2^n
    \sum_{\l = 0}^{n - 1}
    \frac{(n - \l + \ta_3)^{\ta_3 - 1}}
    {\Ga(\ta_3)}
    \cdot
    \frac{\tau^\l}{\l!}
    \nonumber                     \\
     & \geq
    (n!)^2
    \cdot
    \ta_2^n
    \cdot
    \frac{(n + \ta_3)^{\ta_3 - 1}}
    {\Ga(\ta_3)}
    \sum_{\l = 0}^n
    \frac{\tau^\l}{\l!} \nonumber \\
     & = (n!)^2
    \cdot
    \ta_2^n
    \cdot
    \frac{n^{\ta_3 - 1}}{\Ga(\ta_3)}
    \cdot
    \left(
    1
    +
    \frac{\ta_3}{n}
    \right)^{\ta_3 - 1}
    \cdot
    \sum_{\l = 0}^n \frac{\tau^\l}{\l!},
    \label{eq:Psi:n:1}            \\
    \Psi_n(\ta_1, \ta_2, \ta_3)
     & \leq
    (n!)^2
    \cdot
    \ta_2^n
    \cdot
    \frac{\tau^n}{n!}
    +
    (n!)^2
    \cdot
    \ta_2^n
    \cdot
    \sum_{\l = 0}^{n - 1}
    \frac{(n - \l)^{\ta_3 - 1}}{\Ga(\ta_3)}
    \cdot
    \frac{\tau^\l}{\l!}
    \nonumber                     \\
     & = (n!)^2
    \cdot
    \ta_2^n
    \cdot
    \frac{\tau^n}{n!}
    +
    (n!)^2
    \cdot
    \ta_2^n
    \cdot
    \left(
    \sum_{\l = 0}^{h(n)}
    \frac{(n - \l)^{\ta_3 - 1}}{\Ga(\ta_3)}
    \cdot
    \frac{\tau^\l}{\l!}
    +
    \sum_{\l = h(n) + 1}^{n - 1}
    \frac{(n - \l)^{\ta_3 - 1}}{\Ga(\ta_3)}
    \cdot
    \frac{\tau^\l}{\l!}
    \right)
    \nonumber                     \\
     & \leq
    (n!)^2
    \cdot
    \ta_2^n
    \cdot
    \frac{\tau^n}{n!}
    +
    (n!)^2
    \cdot
    \ta_2^n
    \cdot
    \left(
    \sum_{\l = 0}^{h(n)}
    \frac{(n - h(n))^{\ta_3 - 1}}{\Ga(\ta_3)}
    \cdot
    \frac{\tau^\l}{\l!}
    +
    \sum_{\l = h(n) + 1}^{n - 1}
    \frac{(n - \l)^{\ta_3 - 1}}{\Ga(1)}
    \cdot
    \frac{\tau^\l}{\l!}
    \right)
    \nonumber                     \\
     & = (n!)^2
    \cdot
    \ta_2^n
    \left[
      \frac{\tau^n}{n!}
      +
      \frac{n^{\ta_3 - 1}}{\Ga(\ta_3)}
      \cdot
      \left(
      1
      -
      \frac{h(n)}{n}
      \right)^{c - 1}
      \cdot
      \sum_{\l = 0}^{h(n)}
      \frac{\tau^\l}{\l!}
      +
      \sum_{\l = h(n) + 1}^{n - 1}
      \frac{1}{(n - \l)^{1 - \ta_3}}
      \cdot
      \frac{\tau^\l}{\l!}
      \right],
    \label{eq:Psi:n:2}            \\
    \frac{(1 + \frac{\ta_3}{n})^{\ta_3 - 1}}{\Ga(\ta_3)}
    \sum_{\l = 0}^n
    \frac{\tau^\l}{\l!}
     & \leq
    \frac{\Psi_n(\ta_1, \ta_2, \ta_3)}{(n!)^2 \ta_2^n \cdot n^{\ta_3 - 1}}
    \nonumber                     \\
     & \leq
    \frac{\tau^n \cdot n^{1 - \ta_3}}{n!}
    +
    \frac{\left(1 - \frac{h(n)}{n}\right)^{\ta_3 - 1}}{\Ga(\ta_3)}
    \sum_{\l = 0}^{h(n)}
    \frac{\tau^\l}{\l!}
    +
    \sum_{\l = h(n) + 1}^{n - 1}
    \frac{n^{1 - \ta_3}}{(n - \l)^{1 - \ta_3}}
    \cdot
    \frac{\tau^\l}{\l!}.
    \label{eq:Psi:n:3}
  \end{align}
  \rule{\linewidth}{0.1pt}
\end{figure*}

\begin{corollary}
  Given Assumption~\ref{ass:iid:matrices:1}, it holds that
  \begin{align*}
    \expvalbig{\perm(\matr{A})^2}
     & = \Psi_n(\mu_2, \mu_1^2, 1),                      \\
    \expvalbig{\perm_{\Bethe, 2}(\matr{A})^2}
     & = \Psi_n\left(\mu_2, \mu_1^2, \frac{1}{2}\right), \\
    \perm(\matr{1}_{n \times n})^2
     & = \Psi_n(1, 1, 1),                                \\
    \perm_{\Bethe, 2}(\matr{1}_{n \times n})^2
     & = \Psi_n\left( 1, 1, \frac{1}{2} \right).
  \end{align*}
\end{corollary}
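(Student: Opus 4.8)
The plan is to reduce all four identities to evaluations of cycle-index sums over $\Sn$ and then invoke Item~1 of Lemma~\ref{lemma:psi-func-prop}. That item already expresses $\Psi_n(\ta_1,\ta_2,\ta_3)$ as $n!\cdot B_n\bigl(0!\cdot\ta_1,\,1!\cdot\ta_3\ta_2^2,\,\ldots,\,(n-1)!\cdot\ta_3\ta_2^n\bigr)$, i.e.\ as $n!$ times the Bell-polynomial form of the cycle index $Z(\Sn)$ with indeterminates $z_1=\ta_1$ and $z_k=\ta_3\ta_2^k$ for $k\ge 2$. Since the excerpt already records $Z(\Sn)=\frac{1}{n!}\sum_{\sig\in\Sn}\prod_k z_k^{c_k(\sig)}=\frac{1}{n!}B_n\bigl(0!z_1,\ldots,(n-1)!z_n\bigr)$, it suffices, for each of the four quantities, to rewrite it in the form $n!\sum_{\sig\in\Sn}\prod_k z_k^{c_k(\sig)}$ and then read off the matching triple $(\ta_1,\ta_2,\ta_3)$.

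First I would treat $\expvalbig{\perm(\matr{A})^2}$. As in Appendix~\ref{sec:proof:theorem:iid:matrices:1}, linearity of expectation and independence of the entries give $\expvalbig{\perm(\matr{A})^2}=\sum_{\sig_1,\sig_2\in\Sn}\prod_{i\in[n]}h_i(\sig_1,\sig_2)$ with $h_i(\sig_1,\sig_2)=\mu_2$ if $\sig_1(i)=\sig_2(i)$ and $\mu_1^2$ otherwise; since $\prod_i h_i(\sig_1,\sig_2)$ depends only on the cycle type of $\sig_1\circ\sig_2^{-1}$, collapsing the double sum to a single sum over $\sig\defeq\sig_1\circ\sig_2^{-1}$ costs a factor $n!$ and yields $\expvalbig{\perm(\matr{A})^2}=n!\sum_{\sig\in\Sn}\mu_2^{c_1(\sig)}\prod_{k\ge 2}(\mu_1^2)^{k c_k(\sig)}$. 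This is $n!\sum_\sig\prod_k z_k^{c_k(\sig)}$ with $z_1=\mu_2$, $z_k=(\mu_1^2)^k$, hence equals $n!\cdot B_n\bigl(0!\cdot\mu_2,\,1!\cdot(\mu_1^2)^2,\,\ldots,\,(n-1)!\cdot(\mu_1^2)^n\bigr)=\Psi_n(\mu_2,\mu_1^2,1)$ by Item~1 of Lemma~\ref{lemma:psi-func-prop} with $\ta_3=1$. For $\expvalbig{\perm_{\Bethe,2}(\matr{A})^2}$ I would start from Corollary~\ref{cor:prop:average:double:cover:2:2}, which already supplies $\expvalbig{\perm_{\Bethe,2}(\matr{A})^2}=n!\sum_{\sig}\mu_2^{c_1(\sig)}\bigl(\prod_{\ell\ge 2}\mu_1^{2\ell c_\ell(\sig)}\bigr)2^{-c(\sig)}$; absorbing $2^{-c(\sig)}=\prod_{\ell\ge 2}(1/2)^{c_\ell(\sig)}$ makes the summand the cycle-index monomial with $z_1=\mu_2$, $z_k=\tfrac12(\mu_1^2)^k$, so the same argument gives $\Psi_n(\mu_2,\mu_1^2,\tfrac12)$ via Item~1 of Lemma~\ref{lemma:psi-func-prop} with $\ta_3=\tfrac12$.

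The two all-one-matrix identities I would then obtain as the degenerate special case $\mu_1=\mu_2=1$: under the distribution with all mass on the value $1$, the matrix is $\matr{1}_{n \times n}$ with probability one, so $\expvalbig{\perm(\matr{A})^2}=\perm(\matr{1}_{n \times n})^2$ and $\expvalbig{\perm_{\Bethe,2}(\matr{A})^2}=\perm_{\Bethe,2}(\matr{1}_{n \times n})^2$, yielding $\perm(\matr{1}_{n \times n})^2=\Psi_n(1,1,1)$ and $\perm_{\Bethe,2}(\matr{1}_{n \times n})^2=\Psi_n(1,1,\tfrac12)$. As an independent sanity check one can rederive the last one directly: $\perm(\matr{1}_{n \times n})=n!$ and Corollary~\ref{cor:prop:average:double:cover:2:1} give $\perm_{\Bethe,2}(\matr{1}_{n \times n})^2=n!\sum_\sig 2^{-c(\sig)}$, and $\sum_\sig 2^{-c(\sig)}=B_n\bigl(0!\cdot1,\,1!\cdot\tfrac12,\,\ldots,\,(n-1)!\cdot\tfrac12\bigr)$, which matches $\Psi_n(1,1,\tfrac12)$.

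I do not expect a substantial obstacle here; the proof is essentially bookkeeping glued together from Corollaries~\ref{cor:prop:average:double:cover:2:2} and~\ref{cor:prop:average:double:cover:2:1}, the cycle-index/Bell-polynomial identity, and Item~1 of Lemma~\ref{lemma:psi-func-prop}. The one thing to be careful about is tracking the factors of $n!$: one factor enters when the double sum over $(\sig_1,\sig_2)$ is collapsed to a single sum over $\sig=\sig_1\circ\sig_2^{-1}$, and the definition of $\Psi_n$ already carries another $n!$ relative to $Z(\Sn)$, so these must be reconciled consistently. A minor auxiliary point is verifying that the relevant summands are class functions of $\sig_1\circ\sig_2^{-1}$ (so that the collapse with multiplicity $n!$ is legitimate), which holds because $c_1$, $c_k$, and $c$ depend only on the cycle type.
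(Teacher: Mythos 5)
Your proposal is correct and follows essentially the same route as the paper: the paper's one-line proof ("express the left-hand sides in terms of Bell polynomials and use Lemma~\ref{lemma:psi-func-prop}") is exactly the bookkeeping you spell out, including the collapse of the double sum over $(\sig_1,\sig_2)$ to $n!$ times a sum over $\sig = \sig_1 \circ \sig_2^{-1}$, the identification of the resulting cycle-index monomials with $z_1 = \mu_2$ and $z_k = \ta_3 (\mu_1^2)^k$, and the appeal to Item~1 of the lemma. Your treatment of the all-one matrix as the degenerate distribution with $\mu_1 = \mu_2 = 1$ is also precisely the observation the paper makes at the end of Appendix~\ref{sec:proof:theorem:iid:matrices:1}.
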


\begin{proof}
  These expressions follow from expressing the quantities on the left-hand
  side in terms of Bell polynomials and using \Cref{lemma:psi-func-prop}.
\end{proof}

\IEEEtriggeratref{21}

\bibliographystyle{IEEEtran}
\bibliography{refs}

\end{document}